\crefname{hypothesis}{Hypothesis}{Hypotheses}
\crefname{fact}{Fact}{Facts}
\title{Resonance analysis of one-dimensional acoustic media: a propagation matrix approach\thanks{Submitted to the editors DATE.
\funding{This work was partially supported by the Fundamental Research Funds for the Central Universities grant number 226-2025-00192 and the National Key R\&D Program of China grant number 2024YFA1016000.             }}}
\author{Yi Huang\thanks{School of Mathematical Sciences, Zhejiang University, Hangzhou, 310027, China (\email{yi720888@zju.edu.cn, pingliu@zju.edu.cn, shaoyingjie323@zju.edu.cn}).}
\and Bowen Li\thanks{Department of Mathematics, City University of Hong Kong, Kowloon Tong, Hong Kong SAR
  (\email{bowen.li@cityu.edu.hk}).} \and Ping Liu\footnotemark[2] \thanks{ZJU Center for Interdisciplinary Applied Mathematics, Zhejiang University, Hangzhou, 310027, China}
\and Yingjie Shao\footnotemark[2]}
\newtheorem*{example}{Example}
\def\q {\quad}
\def \l{\langle}
\def \r{\rangle}
\def\bb{\begin{equation}
  \left\{\ 
   \begin{aligned} }
\def\ee{   \end{aligned}
  \right.
  \end{equation}}
\def\mm{ \left[
 \begin{matrix}}
\def\nn{\end{matrix} \right] } 
\def\p{\partial}
\def \dd{\mathrm{d}}
\def\d{\delta}
\def\g{\gamma}
\def\s{\sigma}
\def \lad{\lambda}
\def \Lad{\Lambda}
\def\re{\mathfrak{Re}}
\def\im{\mathfrak{Im}}
\def \sdiv {\sdiv_S}
\def \R{\mathbb{R}}
\def \C{\mathbb{C}}
\def \Z{\mathbb{Z}}
\def \e{\mathrm{e}}
\def \i{\mathrm{i}}
\def \ww{\omega}
\def \abs#1{\mathopen| #1 \mathclose|}
\begin{document}

\maketitle


\begin{abstract}
This work analyzes the scattering resonances of general acoustic media in a one-dimensional setting using the propagation matrix approach. Specifically, we characterize the resonant frequencies as the zeros of an explicit trigonometric polynomial. Leveraging Nevanlinna's value distribution theory, we establish the distribution properties of the resonances and demonstrate that their imaginary parts are uniformly bounded, which contrasts with the three-dimensional case. In two classes of high-contrast regimes, we derive the asymptotics of both subwavelength and non-subwavelength resonances with respect to the contrast parameter. Furthermore, by applying the Newton polygon method, we recover the discrete capacitance matrix approximation for subwavelength Minnaert resonances in both Hermitian and non-Hermitian cases, thereby establishing its connection to the propagation matrix framework.
\end{abstract}





\begin{keywords}
Scattering Resonance, Propagation Matrix, Resonance-free Region, Asymptotic Expansion,  Capacitance Matrix, Newton Polygon Method
\end{keywords}

\begin{MSCcodes}
34L20, 34E05, 15A18, 11L03, 30D35
\end{MSCcodes}



\section{Introduction} A fundamental principle in wave physics asserts that, for an object or structure to interact strongly with a wave, such as inducing significant scattering or refraction, its size must be comparable to the wavelength. This principle underpins phenomena like the Abbe diffraction limit, which defines the resolution limit of optical systems, and guides the design of radio antennas. A major scientific challenge is, therefore, to enable wave manipulation at scales far smaller than the wavelength. This has sparked significant interest in the phenomenon of subwavelength resonance, where resonators exhibit strong interactions with incident waves whose wavelengths are orders of magnitude larger than the resonator itself.

A key mechanism for achieving subwavelength resonance is the use of high-contrast media, which consist of structures formed by embedding bounded inclusions with properties that differ significantly from those of the surrounding medium. The stark contrast between the inclusions and the background is a critical prerequisite for the emergence of subwavelength resonance \cite{ammari2018minnaert,meklachi2018asymptotic}. A classic example of this phenomenon is the Minnaert resonance, observed in air bubbles immersed in water \cite{minnaert1933xvi}. Similar subwavelength resonances also appear in other high-contrast systems, including dielectric particles \cite{ammari2023mathematical}, plasmonic particles \cite{ammari2017mathematical}, and Helmholtz resonators \cite{ammari2015mathematical}. The excitation of these resonances has enabled a wide range of innovative wave-based applications, such as superfocusing \cite{ammari2017sub,lanoy2015subwavelength}, cloaking \cite{ammari2013spectral,kohn2014variational}, and wave guiding \cite{ammari2020topologically,ammari2024fano}.


In this work, we will investigate the scattering resonances of acoustic waves in one-dimensional media, where the resonators and the background consist of a finite chain of segments with arbitrary lengths, inter-distances, and material properties (see Figure \ref{fig:setting}). In the high-contrast regime, it has been proven for two- and three-dimensional cases, using layer potential techniques, that resonances exist in the subwavelength regime, giving rise to the so-called Minnaert resonance \cite{ammari2018minnaert,mantile2022origin}. Furthermore, it has been shown that the leading-order terms in their asymptotics with respect to the contrast are characterized by the eigenvalues of the capacitance matrix \cite{ammari2024functional}. However, the boundary integral equation approach is not applicable to one-dimensional systems. Recently, Feppon et al. \cite{Subwavelength_1D_High-Contrast} provided a rigorous analysis of subwavelength resonances in one-dimensional acoustic wave scattering problems, utilizing the variational framework developed in \cite{feppon2024subwavelength}. On the other hand, the propagation matrix approach, which is particularly well-suited for one-dimensional Helmholtz equations (ODE), has been widely employed to analyze resonant wave propagation in topological or disordered media; see \cite{lin2022mathematical,ammari2024subwavelength,thiang2023bulk,ammari2025uniform} for example. 

This raises a natural question about establishing the connection between the propagation matrix approach and the discrete capacitance matrix approach for subwavelength resonances, as well as understanding how the propagation matrix approach operates beyond the subwavelength regime. In this work, we address this gap by analyzing resonances in both the subwavelength and non-subwavelength regimes and demonstrating how the capacitance matrix approximation can be recovered within our framework.





\subsection{Main results}  We consider the one-dimensional acoustic resonance problem \eqref{equ: scattering problem} with the material parameters given by \eqref{equ: k(x) def} and \eqref{eq:contrast}. In this work, we fix the wave-speed ratio $r$ and examine the dependence of the resonances on the complex density ratio $\delta$. Our main contributions are summarized as follows. 

First, using M\"obius transformations, we derive an equivalent analytical condition \eqref{eq:Matrix rep4} for the resonant frequencies in terms of the propagation matrix. In Theorem \ref{thm: resonant frequency property}, we show that the system exhibits identical resonant frequencies for the density ratios $\delta$ and $r^2 / \delta$, and that when $\delta \in \mathbb{R}$, all resonant frequencies are symmetric with respect to the imaginary axis. In particular, when $\delta > 0$, all resonant frequencies lie in the lower half of the complex plane.

Second, using the analytical formulation \eqref{eq:Matrix rep4}, we explicitly compute the analytic function $f(k; \sigma)$, with $\sigma = \d/r$, whose zeros characterize all the resonant frequencies (Theorem \ref{thm: resonant frequency f(z;mu) zeros}). Then, we establish the general distribution pattern of these zeros and characterize the resonance-free region in Theorem \ref{thm: f(z;mu) zeros}, by applying Nevanlinna value distribution theory. Moreover, we prove that as $\delta \to 0$, the zeros of $f(\cdot; \sigma)$, and hence the resonant frequencies, converge to the set $E = \cup_{j=1}^{2N-1} (\pi \mathbb{Z}/t_j)$. The number of zeros near each $k_0 \in E$ is determined by the multiplicity $n(k_0)$ of the corresponding zero of a limiting analytic function $f(\cdot; 0)$; see Theorem \ref{thm: zero near k}. 
For the case of simple zeros ($n(k_0) = 1$), we employ the implicit function theorem to derive the first-order asymptotics of the corresponding resonances as $\delta \to 0$ and $\delta \to \infty$ in Theorem \ref{thm:resonancesexpan1}. 

Third, we further investigate the asymptotic behavior of subwavelength resonances as $\delta \to 0$ and $\delta \to \infty$ in Theorem \ref{thm: subwavelength_resonant_frequencies} and Theorem \ref{thm:nonrecip} for Hermitian and non-Hermitian systems, respectively. Unlike the expansion in the previous section, where the implicit function theorem was applicable, the characteristic function $f(k;\sigma)$ in \eqref{eq: f(z; sigma) expansion} possesses a high-order root at zero, necessitating the use of the Newton polygon method (see Appendix \ref{app: Newton Polygon Method}) from multivariate complex analysis. This enables us to recover the capacitance matrix theory \cite{Subwavelength_1D_High-Contrast} from a novel complex analytic perspective. Moreover, for subwavelength resonant modes, we find that as $\delta \to 0$, the eigenmodes are approximately constant within the resonators and nearly linear within the spacing layers, with amplitudes governed by the capacitance matrix eigenvectors. Conversely, as $\delta \to \infty$, the eigenmodes are approximately linear within the resonators and nearly constant within the spacings. Another interesting finding is that subwavelength resonances persist in one-dimensional systems as $\delta\to\infty$, yet are entirely absent in three-dimensional structures (Section \ref{sec:threeddeltainfinity1}).




\subsection{Outlines} The paper is organized as follows. In Section \ref{sec:preli}, we characterize the resonances in one-dimensional acoustic media using the propagation matrix method. In Section \ref{sec:characresonantfreq}, we establish the general distribution properties of the resonances and derive the asymptotics for two high-contrast regimes. Then, in Section \ref{sec:capacitancematrixtheroy1}, we present how the capacitance matrix approximation for subwavelength resonances can be recovered within the propagation matrix framework. Finally, in Section \ref{sec:nonresys}, we generalize the results from Section \ref{sec:capacitancematrixtheroy1} to non-reciprocal systems.

\section{Preliminaries} \label{sec:preli}
 
This section introduces the scattering resonance problem for acoustic waves in one-dimensional inhomogeneous media. We employ the propagation matrix method to characterize the resonant frequencies. The main result, Theorem \ref{thm: resonant frequency property}, establishes fundamental properties of scattering resonances and unveils a duality between two distinct contrast regimes.




\subsection{Model setting}\label{sec:modelsethermitian}
We consider a one-dimensional chain of $N$ disjoint, identical resonators $D_j \coloneqq (x_j^{-}, x_j^{+})$, where $(x_j^{\pm})_{1 \leq j \leq N} \subset \R$ are the $2N$ boundary points satisfying $x_j^{-} < x_j^{+} < x_{j+1}^{-}$ for all $1 \leq j \leq N-1$. The length of each resonator is denoted by $\ell_j = x_j^{+} - x_j^{-}$, and the spacing between the $j$-th and $(j+1)$-th inclusions is given by $s_j = x_{j+1}^{-} - x_j^{+}$. The configuration of the system is illustrated in Fig.\,\ref{fig:setting}. 

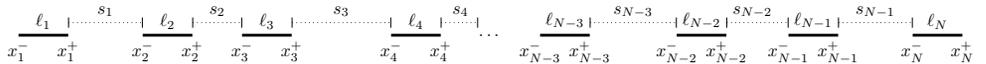
\begin{figure}[htbp!]
    \centering
    \begin{adjustbox}{width=\textwidth}
    \begin{tikzpicture}
        \coordinate (x1l) at (1,0);
        \path (x1l) +(1,0) coordinate (x1r);
        \path (x1r) +(0.75,0.7) coordinate (s1);
        \path (x1r) +(1.5,0) coordinate (x2l);
        \path (x2l) +(1,0) coordinate (x2r);
        \path (x2r) +(0.5,0.7) coordinate (s2);
        \path (x2r) +(1,0) coordinate (x3l);
        \path (x3l) +(1,0) coordinate (x3r);
        \path (x3r) +(1,0.7) coordinate (s3);
        \path (x3r) +(2,0) coordinate (x4l);
        \path (x4l) +(1,0) coordinate (x4r);
        \path (x4r) +(0.4,0.7) coordinate (s4);
        \path (x4r) +(1,0) coordinate (dots);
        \path (dots) +(1,0) coordinate (x5l);
        \path (x5l) +(1,0) coordinate (x5r);
        \path (x5r) +(1.75,0) coordinate (x6l);
        \path (x5r) +(0.875,0.7) coordinate (s5);
        \path (x6l) +(1,0) coordinate (x6r);
        \path (x6r) +(1.25,0) coordinate (x7l);
        \path (x6r) +(0.525,0.7) coordinate (s6);
        \path (x7l) +(1,0) coordinate (x7r);
        \path (x7r) +(1.5,0) coordinate (x8l);
        \path (x7r) +(0.75,0.7) coordinate (s7);
        \path (x8l) +(1,0) coordinate (x8r);
        \draw[ultra thick] (x1l) -- (x1r);
        \node[anchor=north] (label1) at (x1l) {$x_1^{-}$};
        \node[anchor=north] (label1) at (x1r) {$x_1^{+}$};
        \node[anchor=south] (label1) at ($(x1l)!0.5!(x1r)$) {$\ell_1$};
        \draw[dotted,|-|] ($(x1r)+(0,0.25)$) -- ($(x2l)+(0,0.25)$);
        \draw[ultra thick] (x2l) -- (x2r);
        \node[anchor=north] (label1) at (x2l) {$x_2^{-}$};
        \node[anchor=north] (label1) at (x2r) {$x_2^{+}$};
        \node[anchor=south] (label1) at ($(x2l)!0.5!(x2r)$) {$\ell_2$};
        \draw[dotted,|-|] ($(x2r)+(0,0.25)$) -- ($(x3l)+(0,0.25)$);
        \draw[ultra thick] (x3l) -- (x3r);
        \node[anchor=north] (label1) at (x3l) {$x_3^{-}$};
        \node[anchor=north] (label1) at (x3r) {$x_3^{+}$};
        \node[anchor=south] (label1) at ($(x3l)!0.5!(x3r)$) {$\ell_3$};
        \draw[dotted,|-|] ($(x3r)+(0,0.25)$) -- ($(x4l)+(0,0.25)$);
        \node (dots) at (dots) {\dots};
        \draw[ultra thick] (x4l) -- (x4r);
        \node[anchor=north] (label1) at (x4l) {$x_4^{-}$};
        \node[anchor=north] (label1) at (x4r) {$x_4^{+}$};
        \node[anchor=south] (label1) at ($(x4l)!0.5!(x4r)$) {$\ell_4$};
        \draw[dotted,|-|] ($(x4r)+(0,0.25)$) -- ($(dots)+(-.25,0.25)$);
        \draw[ultra thick] (x5l) -- (x5r);
        \node[anchor=north] (label1) at (x5l) {$x_{N-3}^{-}$};
        \node[anchor=north] (label1) at (x5r) {$x_{N-3}^{+}$};
        \node[anchor=south] (label1) at ($(x5l)!0.5!(x5r)$) {$\ell_{N-3}$};
        \draw[dotted,|-|] ($(x5r)+(0,0.25)$) -- ($(x6l)+(0,0.25)$);
        \draw[ultra thick] (x6l) -- (x6r);
        \node[anchor=north] (label1) at (x6l) {$x_{N-2}^{-}$};
        \node[anchor=north] (label1) at (x6r) {$x_{N-2}^{+}$};
        \node[anchor=south] (label1) at ($(x6l)!0.5!(x6r)$) {$\ell_{N-2}$};
        \draw[dotted,|-|] ($(x6r)+(0,0.25)$) -- ($(x7l)+(0,0.25)$);
        \draw[ultra thick] (x7l) -- (x7r);
        \node[anchor=north] (label1) at (x7l) {$x_{N-1}^{-}$};
        \node[anchor=north] (label1) at (x7r) {$x_{N-1}^{+}$};
        \node[anchor=south] (label1) at ($(x7l)!0.5!(x7r)$) {$\ell_{N-1}$};
        \draw[dotted,|-|] ($(x7r)+(0,0.25)$) -- ($(x8l)+(0,0.25)$);
        \draw[ultra thick] (x8l) -- (x8r);
        \node[anchor=north] (label1) at (x8l) {$x_{N}^{-}$};
        \node[anchor=north] (label1) at (x8r) {$x_{N}^{+}$};
        \node[anchor=south] (label1) at ($(x8l)!0.5!(x8r)$) {$\ell_N$};
        \node[anchor=north] (label1) at (s1) {$s_1$};
        \node[anchor=north] (label1) at (s2) {$s_2$};
        \node[anchor=north] (label1) at (s3) {$s_3$};
        \node[anchor=north] (label1) at (s4) {$s_4$};
        \node[anchor=north] (label1) at (s5) {$s_{N-3}$};
        \node[anchor=north] (label1) at (s6) {$s_{N-2}$};
        \node[anchor=north] (label1) at (s7) {$s_{N-1}$};
    \end{tikzpicture}
    \end{adjustbox}
    \caption{A chain of $N$ resonators, with lengths
    $(\ell_j)_{1\leq j\leq N}$ and spacings $(s_{j})_{1\leq j\leq N-1}$.}
    \label{fig:setting}
\end{figure}
We denote the collection of subwavelength resonators by the set
\begin{align*}
   D\coloneqq \bigcup_{j=1}^N(x_j^{-},x_j^{+})\,.
\end{align*}
In this work, we study the one-dimensional Helmholtz equation for the acoustic wave propagation in a heterogeneous medium associated with $D$:
\begin{align}
    \frac{\omega^{2}}{\kappa(x)}u(x) +\frac{\dd}{\dd x}\left( \frac{1}{\rho(x)}\frac{\dd}{\dd
    x}  u(x)\right) =0,\qquad x \in\R\,,
    \label{eq:helmoltz equation 1}
\end{align}
where the bulk modulus $\kappa(x)$ and the density $\rho(x)$ of the medium are assumed to be piecewise constant inside and outside the resonators:
\begin{align}\label{equ: k(x) def}
    \kappa(x)=
    \begin{dcases}
        \kappa_b, & x\in D,\\
        \kappa, &  x\in\R\setminus D,
    \end{dcases}\quad\text{and}\quad
    \rho(x)=
    \begin{dcases}
        \rho_b, & x\in D,\\
        \rho, &  x\in\R\setminus D.
    \end{dcases}
\end{align}
The wave speeds inside the set $D$ of resonators and in the background $\R \setminus D$ are denoted by $v_b$ and $v$, respectively, with the corresponding wave numbers $k_b$ and $k$. The contrasts between the densities and wave speeds of the resonators and the background medium are denoted by $\delta$ and $r$, respectively. Specifically, let 
\begin{align} \label{eq:contrast}
    v_b:=\sqrt{\frac{\kappa_b}{\rho_b}}, \q v:=\sqrt{\frac{\kappa}{\rho}},\q
    k_b:=\frac{\omega}{v_b},\q k:=\frac{\omega}{v},\q
    \delta:=\frac{\rho_b}{\rho},\q r:=\frac{v}{v_b}.
\end{align}
Due to their physical significance, the parameters defined in \eqref{eq:contrast} are typically treated as positive real numbers. However, as established in Theorem \ref{thm: resonant frequency f(z;mu) zeros}, the resonant frequencies of the system are uniquely determined by the parameters $\delta$ and $r$, which can, in general, be complex. In this work, we fix \(r\) as a positive constant, allow \(\delta \in \mathbb{C}\), and study the dependence of the resonance \(\omega\) on the complex parameter \(\delta\). 



Using the parameters defined in \eqref{eq:contrast}, the Helmholtz equation \eqref{eq:helmoltz equation 1} can be reformulated as follows:
\begin{align}
    \label{equ: scattering problem}
    \begin{dcases}
        \frac{\dd{^2}}{\dd x^2}u(x)+ k^2 u(x) = 0, & x\in \R \setminus D ,\\
        \frac{\dd{^2}}{\dd x^2}u(x)+ k_b^2 u(x) = 0, & x\in D ,\\
        u\vert_{+}(x^{\pm}_{{j}}) = u\vert_{-}(x^{\pm}_{{j}}) , &  1\leq j\leq N ,\\
        \left.\frac{\dd u}{\dd x}\right\vert_{\pm}(x^{\mp}_{{j}}) = \delta\left.\frac{\dd u}{\dd x}\right\vert_{\mp}(x^{\mp}_{{j}}), & 1\leq j\leq N ,\\
        \left(\frac{\dd}{\dd |x|} - \mathrm{i} k \right) u = 0 & \text{for } x \in (-\infty, x^{-}_{1}) \cup (x^{+}_{N}, +\infty),  \\
    \end{dcases}
\end{align}
where for a one-dimensional function $w$, we denote its left and right limits, if they exist, by $w\vert_{\pm}(x) \coloneqq \lim_{s \to 0^+} w(x\pm s)$. 
We say that $\ww \in \C$ is a \emph{resonant frequency (resonance)} if the scattering problem \eqref{equ: scattering problem} admits non-trivial solutions $u$, which are called resonant modes. 

\subsection{Propagation matrix and M\"obius transformation}

For the scattering problem (\ref{equ: scattering problem}), the zero frequency, \(\omega^* = 0\), is always a resonant frequency, as any constant function satisfies the equation. In this work, however, we focus on the non-trivial resonant frequencies (\(\omega \neq 0\)). To characterize these non-trivial resonances, we introduce the propagation matrix method in this section.

Let us begin with the following second-order ODE for $0 \neq k \in \C$:
\begin{align} \label{eq:2ndode}
\frac{\dd^2}{\dd x^2}u(x) + k^2 u(x) = 0, \quad x \in (0, a),
\end{align}
where the solution can be expressed as $u(x) = A\e^{\i kx} + B\e^{-\i kx}$, with $A, B \in \C$ uniquely given by 
\begin{align*}
    A=\frac{u(0)+\frac{1}{\i k}u'(0)}{2}, \q  B=\frac{u(0)-\frac{1}{\i k}u'(0)}{2}.
\end{align*}
It follows that 
\begin{equation} \label{eq:soluprop}
    \begin{aligned}
          u(a)&= A \e^{\i ka}+ B \e^{-\i ka}=\cos(ka)u(0)+\frac{1}{k}\sin(ka)u'(0),\\
    u'(a)&=\i k A \e^{\i ka}-\i k B \e^{-\i ka}=-k\sin(ka)u(0)+\cos(ka)u'(0).
    \end{aligned}
\end{equation}
Define matrices, for $z, k \in \C$ and $a \in \R$,
\begin{equation} \label{eq:deftka}
    T(k,a) := \begin{pmatrix}
        \cos(ka) & \sin(ka)\\
        -\sin(ka) & \cos(ka)
    \end{pmatrix},\q
    M(z) := \begin{pmatrix}
        1 & 0\\
        0 & z
    \end{pmatrix}.
\end{equation}
The propagation matrix $P(k,a)$ for \eqref{eq:2ndode} can be defined by \eqref{eq:soluprop} as follows: 
\begin{align} \label{eq:solpropa}
    \begin{pmatrix}
        u(a)\\u'(a)
    \end{pmatrix}=
   P(k,a)\begin{pmatrix}
        u(0)\\u'(0)
    \end{pmatrix}
    ,\q P(k,a) :=  
    M(k)T(k,a)M\left(\frac{1}{k}\right).
\end{align}

We now turn to the scattering problem \eqref{equ: scattering problem}, and the above formula \eqref{eq:solpropa} implies 
\begin{align}\label{equ: transfer matrix inclusions}
\begin{pmatrix}
u|_-(x_{j+1}^-)\\u'|_-(x_{j+1}^-)
\end{pmatrix} = P(k,s_j)  \begin{pmatrix}
    u|_+(x_j^+)\\u'|_+(x_j^+)
\end{pmatrix},\quad 1\leq j\leq N-1,
\end{align}
and
\begin{align} \label{eq:transferreso}
\begin{pmatrix}
    u|_-(x_j^+)\\u'|_-(x_j^+)
\end{pmatrix} = P(rk,\ell_j) \begin{pmatrix}
    u|_+(x_j^-)\\u'|_+(x_j^-)
\end{pmatrix},
\quad 1\leq j\leq N.    
\end{align}
Note from the transmission boundary condition in \eqref{equ: scattering problem} that for $1\leq j\leq N$, 
\begin{equation} \label{eq:jump}
    \begin{pmatrix}
    u|_+(x_j^+)\\u'|_+(x_j^+)
\end{pmatrix}=M\left(\frac{1}{\delta}\right)\begin{pmatrix}
    u|_-(x_j^+)\\u'|_-(x_j^+)
\end{pmatrix}, \q  
\begin{pmatrix}
    u|_+(x_j^-)\\u'|_+(x_j^-)
\end{pmatrix}=M\left(\delta\right)\begin{pmatrix}
    u|_-(x_j^-)\\u'|_-(x_j^-)
\end{pmatrix}.
\end{equation}
Combining \eqref{eq:transferreso} and \eqref{eq:jump} gives 
\begin{align}\label{equ: transfer matrix resonantors}
\begin{pmatrix}
    u|_+(x_j^+)\\u'|_+(x_j^+)
\end{pmatrix}=M\left(\frac{rk}{\delta}\right)T(rk,\ell_j)M\left(\frac{\delta}{rk}\right)\begin{pmatrix}
    u|_-(x_j^-)\\u'|_-(x_j^-)
\end{pmatrix}\,, \quad 1\leq j\leq N,
\end{align}
thanks to $M(z_1)M(z_2) = M(z_1 z_2)$. By the radiation condition in \eqref{equ: scattering problem}, there holds 
\begin{equation*}
    u(x)=\begin{cases}
        c_1\e^{-\i k(x-x_1^-)},&x<x_1^-,\\
        c_2\e^{\i k(x-x_N^+)},&x>x_N^+,
    \end{cases}
\end{equation*}
with $c_1,c_2 \in \C$. We assume both $c_1$ and $c_2$ are non-zero; otherwise, $u \equiv 0$. Consequently, we have, using $M(z)$ in \eqref{eq:deftka}, 
\begin{align}\label{equ: transfer matrix infty}
\begin{pmatrix}
    u_-(x_1^-)\\u'|_-(x_1^-)
\end{pmatrix}
=c_1M(k)\begin{pmatrix}
    1\\-\i
\end{pmatrix},\quad
\begin{pmatrix}
    u_+(x_N^+)\\u'|_+(x_N^+)
\end{pmatrix}
=c_2M(k)\begin{pmatrix}
    1\\\i
\end{pmatrix}.
\end{align}
Thus, for a non-trivial resonance $\ww = kv \neq 0$, the equations (\ref{equ: transfer matrix inclusions}), (\ref{equ: transfer matrix resonantors}) and (\ref{equ: transfer matrix infty}) imply
\begin{align} \label{eq:Matrix rep2}
    c\begin{pmatrix}
        1 \\ \i
    \end{pmatrix}=M\left(\frac{r}{\d}\right)T(rk,l_N)M\left(\frac{\d}{r}\right)T(k,s_{N-1})\cdots M\left(\frac{r}{\d}\right)T(kr,l_1)M\left(\frac{\d}{r}\right)
    \begin{pmatrix}
        1 \\-\i
    \end{pmatrix},
\end{align}
where $c=c_2/c_1\ne0$. We summarize the above discussion as follows. 
\begin{lemma}
    $\ww \in \C \backslash \{0\}$ is a resonance for \eqref{equ: scattering problem} if and only if the corresponding wave number $k = \ww / v$ satisfies \eqref{eq:Matrix rep2} for some $c \neq 0$. In this case, $c$ is uniquely determined by $k$. 
\end{lemma}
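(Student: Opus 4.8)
The plan is to trace the chain of equivalences already assembled in the excerpt and observe that each step is reversible. First I would recall that, by the transmission and radiation conditions, any resonant mode $u$ for \eqref{equ: scattering problem} with $\omega = kv \neq 0$ is completely determined on all of $\R$ by the vector $(u|_-(x_1^-), u'|_-(x_1^-))^\top$, since the ODEs in \eqref{eq:2ndode}-type form propagate this Cauchy data through each resonator via \eqref{eq:transferreso} and through each gap via \eqref{equ: transfer matrix inclusions}, while the transmission jumps are encoded by the invertible matrices $M(\delta)$ and $M(1/\delta)$ in \eqref{eq:jump}. The radiation condition on $(-\infty, x_1^-)$ forces this initial vector to be a scalar multiple $c_1 M(k)(1,-\i)^\top$, and the radiation condition on $(x_N^+, +\infty)$ forces the propagated terminal vector to be a scalar multiple $c_2 M(k)(1,\i)^\top$; nontriviality of $u$ is equivalent to $c_1 \neq 0$ (hence also $c_2 \neq 0$, since the full propagation matrix is invertible), as already noted before \eqref{equ: transfer matrix infty}.

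Next I would assemble: composing \eqref{equ: transfer matrix infty}, \eqref{equ: transfer matrix resonantors}, and \eqref{equ: transfer matrix inclusions} across $j = 1, \dots, N$, cancelling the common factor $M(k)$ from the left (it is invertible since $k \neq 0$) and absorbing the scalar $c_1$, yields exactly \eqref{eq:Matrix rep2} with $c = c_2/c_1 \neq 0$. This gives the forward implication. For the converse, given $k = \omega/v \neq 0$ satisfying \eqref{eq:Matrix rep2} for some $c \neq 0$, I would \emph{define} $u$ on $(-\infty, x_1^-)$ by $u(x) = e^{-\i k(x - x_1^-)}$, then extend it successively through each resonator and gap using the solution formula \eqref{eq:soluprop}/\eqref{eq:solpropa} and imposing the transmission conditions \eqref{eq:jump}; on $(x_N^+, +\infty)$ the resulting Cauchy data is, by \eqref{eq:Matrix rep2}, proportional to $M(k)(1,\i)^\top$, so $u(x) = c\, e^{\i k(x - x_N^+)}$ there, which satisfies the outgoing radiation condition. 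By construction $u$ solves both Helmholtz ODEs, all transmission conditions, and the radiation condition, and it is nontrivial since $u \not\equiv 0$ on $(-\infty, x_1^-)$; hence $\omega$ is a resonance.

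For the uniqueness of $c$: if $k$ is fixed, the product matrix $\Pi(k) := M(r/\delta) T(rk,\ell_N) M(\delta/r) T(k, s_{N-1}) \cdots M(r/\delta) T(rk, \ell_1) M(\delta/r)$ on the right-hand side of \eqref{eq:Matrix rep2} is a fixed matrix, so $c (1,\i)^\top = \Pi(k)(1,-\i)^\top$ determines $c$ uniquely (the left-hand side is a nonzero vector, so $c$ is read off from either component). Equivalently, since the resonant mode is determined up to the single scaling freedom fixed by normalizing $c_1 = 1$, the ratio $c = c_2/c_1$ is an intrinsic function of $k$. I do not anticipate a genuine obstacle here; the only point requiring a little care is bookkeeping the wave-speed ratio $r$ inside the resonators versus the background in the arguments $T(rk, \ell_j)$ versus $T(k, s_j)$, and confirming that the identity $M(z_1)M(z_2) = M(z_1 z_2)$ has been used correctly to collapse \eqref{eq:transferreso}+\eqref{eq:jump} into \eqref{equ: transfer matrix resonantors}, both of which are already carried out in the excerpt. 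So the proof is essentially a matter of stating that every arrow above is an equivalence.
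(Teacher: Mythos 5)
Your proposal is correct and follows essentially the same route as the paper, which states this lemma as a summary of the derivation of \eqref{eq:Matrix rep2} from \eqref{equ: transfer matrix inclusions}, \eqref{equ: transfer matrix resonantors}, and \eqref{equ: transfer matrix infty}; your contribution is simply to make explicit the (correct) observations that every step is reversible, that the converse follows by constructing $u$ from the propagated Cauchy data, and that $c$ is read off from the fixed vector $\Pi(k)(1,-\i)^\top$.
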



We introduce a vector 
\begin{equation} \label{def:vectort}
    \bm t:=(r\ell_1,\ s_{1},\ r\ell_{2},\ s_{2},\ \cdots,\ r\ell_{N-1},\ s_{N-1},\ r\ell_N)^\top\in\R^{2N-1}_{>0},
\end{equation}
and denote
\[
\sigma=\frac{\d}{r}.
\]
Using $T(rk, a) = T(k, ra)$ by definition \eqref{eq:deftka}, the equation \eqref{eq:Matrix rep2} can be written as 
\begin{align} \label{eq:Matrix rep3}
    c\begin{pmatrix}
        1 \\\i
    \end{pmatrix}=M\left(\frac{1}{\sigma}\right)T(k,t_{2N-1})M(\sigma)T(k,t_{2N-2})\cdots M\left(\frac{1}{\sigma}\right)T(k,t_{1})M(\s)
    \begin{pmatrix}
        1 \\-\i
    \end{pmatrix},
\end{align}
where $t_j$ is the $j-$th component of $\bm t$.
Letting $e_{\pm} = (1, \pm \i)^\top$, we have, by \eqref{eq:deftka}, 
\begin{align*}
    &T(k,a)e_{\pm}=\e^{\pm \i ka}e_{\pm}, \q M(\sigma)e_{\pm}=\frac{1+\sigma}{2}e_{\pm} +\frac{1-\s}{2}e_{\mp}\,.
\end{align*}
This enables us to further rewrite \eqref{eq:Matrix rep3} as: for $c \neq 0$,
\begin{align} \label{eq:Matrix rep4}
    c\begin{pmatrix}
        1 \\0
    \end{pmatrix}=R\left(\frac{1}{\s}\right)L(t_{2N-1}k)R(\s)L(t_{2N-2}k)\cdots R\left(\frac{1}{\s}\right)L(t_1k)R(\s)
    \begin{pmatrix}
        0 \\1
    \end{pmatrix}, 
\end{align}
where for $z \in \C$, 
\begin{align} \label{def:rlmatrix}
    R(z) := \begin{pmatrix}
        \frac{1+ z}{2} & \frac{1-z}{2}\\
        \frac{1- z}{2} & \frac{1+z}{2}
    \end{pmatrix},
    & \quad
    L(z) := \begin{pmatrix}
         \e^{\i z} & 0\\
        0 & \e^{-\i z}
    \end{pmatrix}.
\end{align}

The characterization in \eqref{eq:Matrix rep4} is fundamental to all the main results of this paper. As a first application, we next establish some general properties of the resonant frequencies, with a more detailed treatment reserved for Sections \ref{sec:characresonantfreq} and \ref{sec:capacitancematrixtheroy1}.

We define the M\"obius transformation (see \cite[Chapter 3]{wegert2012visual}) associated with a complex matrix $A=(a_{ij})_{2\times2}\in\C^{2\times2}$ by
\begin{align} \label{def:moboiustran}
    f_A(z):=\frac{a_{11}z+a_{12}}{a_{21}z+a_{22}} ,\quad z \in \C\cup \{\infty\},
\end{align}
and for $\Phi = (\phi_1, \phi_2)^\top \in \C^2$, we define $\mathcal{M}(\Phi) := \phi_1 / \phi_2 \in \C \cup \{\infty\}$. It is straightforward to verify that 
\begin{align} \label{propmoub1}
    \mathcal{M}(A\Phi)=f_A \circ \mathcal{M}(\Phi),\quad f_{AB}=f_A \circ f_B.
\end{align}
We then introduce two classes of rational functions by \eqref{def:moboiustran}, based on matrices $R(\sigma)$ and $L(t_j k)$, 
\begin{align} \label{propmoub2}
    &f_{\s}(z):=f_{R(\s)}(z)=\frac{(1+\s)z+(1-\s)}{(1-\s)z+(1+\s)},\quad g_{j}(z):=f_{L(t_jk)}(z)=\e^{2\i kt_j}z.
\end{align}
Applying $\mathcal{M}$ on both sides of  \eqref{eq:Matrix rep4} and using \eqref{propmoub1}-\eqref{propmoub2}, we obtain
\[
   f_{\s^{-1}}\circ g_{2N-1} \circ f_\s \circ \cdots \circ g_{1} \circ f_\s\circ \mathcal M \left(
\begin{pmatrix}0\\
1
\end{pmatrix}
   \right)= \mathcal M \left(
\begin{pmatrix}1\\
0
\end{pmatrix}\right),
\]
that is, 
\begin{align} \label{eq:Mobius rep1}
    f_{\s^{-1}}\circ g_{2N-1} \circ f_\s \circ \cdots \circ g_{1} \circ f_\s(0)=\infty, 
\end{align}
which is equivalent to \eqref{eq:Matrix rep4}. We now conclude this section with the following theorem.




\begin{theorem}\label{thm: resonant frequency property}
For resonant frequencies $\ww$ of the problem (\ref{equ: scattering problem}), we have 
\begin{enumerate}
     \item If $\d \in \R$, all resonances are symmetric with respect to the imaginary axis.
    \item If $\d>0$, all non-trivial resonant frequencies have negative imaginary parts.
    \item Let $0\ne\delta_0\in\C$, then (\ref{equ: scattering problem}) with $\delta=\delta_0$ and $\delta=\tfrac{r^2}{\delta_0}$ has the same resonances $\omega = kv$. Moreover, let $u(x)$ and $v(x)$ be the resonant modes associated with a resonance $\ww$ for $\delta=\delta_0$ and $\delta= \tfrac{r^2}{\delta_0}$, respectively, satisfying 
 $u(x)=\e^{-\i\frac{\ww}{v}x}$,  $v(x)=t\e^{-\i\frac{\ww}{v}x}$, $x< x_1^{-}$ for some $0\ne t\in\C$. Then, \begin{equation}\label{equ:eigenmodesrelation1}
    \begin{aligned}
         &u'(x) = -\mathrm{i} \frac{\d}{tr}k_b  v(x), \quad v'(x) = -\mathrm{i} \frac{tr}{\d}k_b  u(x), \quad x\in D\\
         &u'(x) = -\mathrm{i}t^{-1}  k  v(x), \quad v'(x) = -\mathrm{i}t k  u(x), \quad x\in \R\setminus D,
     \end{aligned}
     \end{equation}
where, at each endpoint $x_j^\pm$, $u'(x)$ is interpreted as either \(\left. \tfrac{\dd u}{\dd x}\right|_{-}\) or \(\left.\tfrac{\dd u}{\dd x}\right|_{+}\).
\end{enumerate}
\end{theorem}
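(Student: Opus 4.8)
The plan is to work entirely from the M\"obius formulation \eqref{eq:Mobius rep1} together with its matrix form \eqref{eq:Matrix rep4}, and to exploit symmetries of the building blocks $R(\sigma)$, $L(t_j k)$ under complex conjugation, $k \mapsto -k$, and $\sigma \mapsto 1/\sigma$. For part (1), I would take complex conjugates in \eqref{eq:Matrix rep4}. Since $\bm t \in \R^{2N-1}_{>0}$, one has $\overline{L(t_j k)} = L(-t_j \bar k) = L(t_j(-\bar k))$, and when $\delta \in \R$ (hence $\sigma \in \R$) we have $\overline{R(\sigma)} = R(\sigma)$ and $\overline{R(1/\sigma)} = R(1/\sigma)$. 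Thus conjugating the whole product shows that if $k$ solves \eqref{eq:Matrix rep4} with constant $c$, then $-\bar k$ solves it with constant $\bar c \neq 0$; since $\omega = kv$ with $v>0$, resonances come in pairs $\omega, -\bar\omega$, i.e.\ the resonance set is symmetric about the imaginary axis.

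For part (2), assume $\delta>0$ and suppose toward a contradiction that a nontrivial resonance $\omega$ has $\mathfrak{Im}\,\omega \geq 0$, equivalently $\mathfrak{Im}\,k \geq 0$. The natural tool is an energy/Green's identity for \eqref{equ: scattering problem}: multiply the Helmholtz equation by $\bar u$, integrate over $\R$ (or over a large interval $(-R,R)$), integrate by parts, and use the transmission conditions $u|_+ = u|_-$ and $\frac{1}{\rho}$-weighted flux continuity encoded by the $\delta$-jump, so that all interior boundary terms cancel. The radiation condition $u = c_1 e^{-\mathrm{i}k(x-x_1^-)}$ for $x<x_1^-$ and $u = c_2 e^{\mathrm{i}k(x-x_N^+)}$ for $x>x_N^+$ contributes boundary terms proportional to $\mathrm{i}k(|c_1|^2+|c_2|^2)$. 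Taking imaginary parts of the resulting identity, and using that $\kappa(x)>0$, $\rho(x)>0$, and $\delta>0$ make the volume integrals real, forces $\mathfrak{Re}(\mathrm{i}\bar k)\big(\text{positive quantity}\big) = \mathfrak{Im}(k)\cdot(\text{positive}) \leq 0$ unless $c_1=c_2=0$; combined with unique continuation this gives $u\equiv 0$, a contradiction. One must be slightly careful about the weight: writing the PDE as $\frac{\omega^2}{\kappa}u + (\frac{1}{\rho}u')' = 0$ and pairing with $\bar u$ produces $\int \frac{\omega^2}{\kappa}|u|^2 - \int \frac{1}{\rho}|u'|^2 + \text{boundary}$, and here $\omega^2$ is complex while the weights are positive, so one tracks $\mathfrak{Im}(\omega^2)$ and $\mathfrak{Im}(\omega)$ jointly; the sign bookkeeping is the one place requiring care.

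For part (3), the duality $\delta_0 \leftrightarrow r^2/\delta_0$ corresponds under $\sigma = \delta/r$ to $\sigma \leftrightarrow 1/\sigma$. The key algebraic observation is a conjugation identity: with $J := \begin{pmatrix} 0 & 1 \\ 1 & 0\end{pmatrix}$, one has $J R(\sigma) J = R(\sigma)$ is \emph{not} quite it; rather the right statement is that swapping the roles of $R(\sigma)$ and $R(1/\sigma)$ in the alternating product of \eqref{eq:Matrix rep4} — equivalently reversing the order of factors and applying the symmetry $L(z)^\top = L(z)$, $R(\sigma)^\top = R(\sigma)$, together with the fact that transposing a M\"obius chain and swapping $e_+ \leftrightarrow e_-$ / the vectors $(1,0)^\top$, $(0,1)^\top$ leaves the solvability condition invariant — maps the $\delta_0$-problem to the $r^2/\delta_0$-problem at the same $k$. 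So I would: (i) transpose \eqref{eq:Matrix rep4}, use that all $R$ and $L$ matrices are symmetric, to get $c(1,0) = (0,1)\, R(\sigma) L(t_1 k) R(1/\sigma)\cdots R(\sigma) L(t_{2N-1}k) R(1/\sigma)$; (ii) recognize the right side as the $\sigma \mapsto 1/\sigma$ version of the original product read right-to-left with $(1,0)^\top$, $(0,1)^\top$ interchanged, which is exactly the resonance condition for $\sigma^{-1} = r/\delta_0 = (r^2/\delta_0)/r$; hence $k$ is a resonance for $\delta = r^2/\delta_0$. For the mode relations \eqref{equ:eigenmodesrelation1}: on each interval the solution lives in the span of $e^{\pm\mathrm{i}k x}$ (or $e^{\pm\mathrm{i}rk x}$ inside $D$), and the pair $(u, u')$ transported by the $P$-matrices differs between the two problems only through the $M(\delta)$, $M(1/\delta)$ jumps at the $x_j^\pm$. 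Tracking a single solution through the chain and comparing the two values of $\delta$ shows that the ratio $u'/u$ in the background scales by $t^{\mp 1}$ and inside $D$ by $(\delta/(tr))$ versus $(tr/\delta)$; normalizing $u = e^{-\mathrm{i}(\omega/v)x}$ and $v = t e^{-\mathrm{i}(\omega/v)x}$ for $x < x_1^-$ fixes the constant $t$ and propagates these relations to all intervals. The main obstacle I anticipate is part (3): getting the transpose/reversal bookkeeping exactly right — in particular matching the normalizations at $x_1^-$ so that the proportionality constant appearing in the two resonant modes is genuinely the same $t$ throughout, rather than an interval-dependent constant — will require patient but elementary tracking of how $M(\delta)^{\pm 1}$ acts on the $(1,\pm\mathrm{i})$ basis.
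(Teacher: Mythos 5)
Your parts (1) and (2) are fine: (1) is essentially the paper's argument (conjugating the resonance identity and using $\overline{L(t_jk)}=L(t_j(-\bar k))$, $\overline{R(\sigma)}=R(\sigma)$ for real $\sigma$). For (2) you take a genuinely different route: the paper never touches an energy identity, but instead shows that for $\sigma>0$ the M\"obius map $f_\sigma$ maps the unit disc into itself (via $\tfrac{f_\s(z)-1}{f_\s(z)+1}=\s\tfrac{z-1}{z+1}$) and that $|g_j(z)|\le |z|$ when $\im k\ge 0$, so the composition in \eqref{eq:Mobius rep1} can never reach $\infty$. Your Green's-identity argument also works (weight the integrand by $1$ outside $D$ and $1/\delta$ inside so the interior flux terms cancel), but it genuinely requires the case split you only gesture at: for $\im k>0$ the boundary terms vanish in the limit and the contradiction is that $k^2$ is forced to be real nonnegative, whereas for $\im k=0$, $k\neq 0$ the boundary term $\i k(|c_1|^2+|c_2|^2)$ forces $c_1=c_2=0$ and then ODE unique continuation through the chain gives $u\equiv 0$. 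The conclusion you wrote, ``$\im(k)\cdot(\text{positive})\le 0$,'' is not the statement that comes out of the identity; what comes out is a constraint on $\im(k^2)$, and the two sub-cases must be closed separately.

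The genuine gap is in part (3). Transposing \eqref{eq:Matrix rep4} and using the symmetry of $R$ and $L$ gives $(A^\top)_{22}=A_{22}$ with $A^\top=R(\sigma)L(t_1k)R(1/\sigma)\cdots R(\sigma)L(t_{2N-1}k)R(1/\sigma)$. This is \emph{not} the total matrix of the original configuration at $\sigma^{-1}$: the resonator/spacing lengths now appear in the order $t_1,t_2,\dots,t_{2N-1}$ instead of $t_{2N-1},\dots,t_1$. So what the transpose identity actually proves is that $(\bm t,\sigma)$ and $(\bm t^{\mathrm{rev}},\sigma^{-1})$ have the same resonances, where $\bm t^{\mathrm{rev}}$ is the spatially reversed chain. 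For a non-palindromic configuration this is not yet the claim; you need the additional (true, and easy, but unstated) fact that reversing the chain does not change the resonance set — e.g.\ by the change of variables $x\mapsto -x$ in \eqref{equ: scattering problem}, under which the transmission and outgoing radiation conditions are invariant. Without that supplementary lemma, step (ii) of your plan (``recognize the right side as \dots exactly the resonance condition for $\sigma^{-1}$'') asserts an identification that is false as stated. The paper avoids this entirely by keeping the order of the factors fixed and instead threading a sign through the composition via the identity $f_{\sigma}(-z)=-f_{\sigma^{-1}}(z)$ together with the oddness of the $g_j$'s.

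For the mode relations \eqref{equ:eigenmodesrelation1}, your plan of ``tracking the solution through the chain'' is in the right spirit but misses the one structural fact that makes the bookkeeping finite: the paper isolates the lemma that the \emph{cross}-relations $u'=\i k t\,v$ and $v'=\i k t^{-1}u$ (relating $u'$ to the \emph{other} mode $v$, not the ratio $u'/u$ within one mode) are preserved along each homogeneous layer by the propagation matrix, so one only needs to verify them at the endpoints $x_j^\pm$ using the jump conditions for $\delta_0$ and $r^2/\delta_0$. Your sketch speaks of how ``the ratio $u'/u$ scales,'' which is not the quantity appearing in \eqref{equ:eigenmodesrelation1}; as written the plan would not converge to the stated coupled identities without first formulating this propagation lemma.
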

\begin{remark}
Statement (3) asserts that, in one-dimensional space, a system with high-density resonators embedded in a low-density background shares exactly the same set of resonant frequencies as the system with the density contrast reversed. However, this property is unique to one dimension and does not generally hold in higher-dimensional spaces. Specifically, we will demonstrate its failure in the three-dimensional case in Section \ref{sec:threeddeltainfinity1}.
\end{remark}


\begin{proof}[Proof of Theorem \ref{thm: resonant frequency property}]
    For (1), we only need to take the conjugate of both sides of (\ref{eq:Mobius rep1}) which shows $(\d,-\overline{k})$ also satisfies \eqref{eq:Mobius rep1}. 
    
    For (2), it relies on the fact that $|f_\s(z)|<1$ for $|z|<1$, which can be proved by applying the following two properties: $|z|<1$ if and only if $\re(\frac{z-1}{z+1})<0$; and $\tfrac{f_\s(z)-1}{f_\s(z)+1}=\s\frac{z-1}{z+1}$. Moreover, if  $\im(k)\geq 0$, noting $r>0$, then $|g_j(z)|\leq |z|$ for any $ z\in\C$. Therefore, the modulus of the left-hand side of (\ref{eq:Mobius rep1}) must always remain less than \(1\) and can never approach \(\infty\). It follows that $\im(k) < 0$.  

For the first part of (3), the proof relies on the key observation \(f_{\sigma}(-z) = -f_{\sigma^{-1}}(z)\). Then, for \((\delta, k)\) satisfying \eqref{eq:Mobius rep1}, we have
    \begin{align*}
        &f_{\s}\circ g_{2N-1} \circ f_{\s^{-1}} \circ \cdots \circ g_{1} \circ f_{\s^{-1}}(0)\\
        =&f_{\s}\circ g_{2N-1} \circ f_{\s^{-1}} \circ \cdots \circ g_1 \circ -f_{\s}(0)\\
        =&f_{\s}\circ g_{2N-1} \circ f_{\s^{-1}} \circ \cdots \circ (-g_{1}) \circ f_{\s}(0)
        \\
        =&\cdots\\=&-f_{\s^{-1}}\circ g_{2N-1} \circ f_{\s} \circ \cdots \circ g_{1} \circ f_{\s}(0) = \infty.
    \end{align*}
Thus, \((\tfrac{r^2}{\delta}, k)\) also satisfies \eqref{eq:Mobius rep1}, completing the proof.
    
    For the second part of (3), we first show the following claim:

\smallskip 


    \noindent
    \textbf{Claim:} Assume \(0 \neq k \in \mathbb{C}\), and the functions \(u\) and \(v\) satisfy the Helmholtz equation \(w'' + k^2 w = 0\) on the interval \([0, a]\). If the boundary conditions $u'(0) = \mathrm{i} k t v(0)$ and $v'(0) = \mathrm{i} k t^{-1} u(0)$ hold at the endpoint \(0\) for some \(t \in \mathbb{C}\), then for all \(x \in [0, a]\), we have $u'(x) = \mathrm{i} k t v(x)$ and $v'(x) = \mathrm{i} k t^{-1} u(x)$. 
\smallskip 

\noindent
The claim is derived by using the propagation matrix in (\ref{eq:solpropa}). Specifically, we have
    \begin{align*}
        u(x)&=\cos(kx)u(0)+\frac{1}{k}\sin(kx)u'(0)=\cos(kx)\frac{t}{\i k}v'(0)+\frac{1}{k}\sin(kx)\i ktv(0)\\
        &=\frac{t}{\i k}\left(\cos(kx)v'(0)-k\sin(kx)v(0)\right) = \frac{t}{\i k}v'(x). 
    \end{align*}
Similarly, using the same approach, we can derive that \(u'(x) = \mathrm{i} k t v(x)\). 
According to the above claim, to prove the relations \eqref{equ:eigenmodesrelation1} between the eigenmodes, it suffices to show that \eqref{equ:eigenmodesrelation1} holds at every endpoint $x_j^\pm$. By the assumption on \(u(x)\) and \(v(x)\), \eqref{equ:eigenmodesrelation1} is satisfied on $(-\infty,x_1^-)$. Then, using the jump relation, we have 
    \begin{align*}
       &\left.\frac{\dd u}{\dd x}\right\vert_{+}(x_1^{-})=\d\left.\frac{\dd u}{\dd x}\right\vert_{-}(x_1^{-})=-\mathrm{i} \frac{\d}{tr}k_bv(x), \\
       &\left.\frac{\dd v}{\dd x}\right\vert_{+}(x_1^{-})=\frac{r^2}{\d}\left.\frac{\dd v}{\dd x}\right\vert_{-}(x_1^{-})=-\mathrm{i} \frac{tr}{\d}k_bu(x),
    \end{align*}
    which implies \eqref{equ:eigenmodesrelation1} on $(x_1^-,x_1^+)$. 
   Continuing this process for all intervals, we complete the proof.
\end{proof}

\section{Characterization of resonant frequencies} \label{sec:characresonantfreq}

In this section, we first reformulate the acoustic resonance problem \eqref{equ: scattering problem} (equivalently, the equation \eqref{eq:Matrix rep4}) as the study of zeros of a trigonometric polynomial (Theorem \ref{thm: resonant frequency f(z;mu) zeros}). Leveraging properties of zeros of trigonometric polynomials, we then establish a distribution law for the resonances (Theorems \ref{thm: f(z;mu) zeros} and \ref{thm: zero near k}). Finally, we derive the leading-order asymptotic expansions of the resonances $k$ as $\d \to 0$ and $\d \to \infty$ (Theorem \ref{thm:resonancesexpan1}).

\subsection{Distribution property}

Building on the resonance characterization \eqref{eq:Matrix rep4}, we define the transformed total propagation matrix for the problem \eqref{equ: scattering problem}: 
\begin{align}\label{equ: M_total def}
    M_{tot}(k; \s) := \frac{(4\s)^N}{(1+\s)^{2N}} R\left(\frac{1}{\s}\right)L(t_{2N-1}k)R(\s)L(t_{2N-2}k) \cdots R\left(\frac{1}{\s}\right)L(t_1k)R(\s),
\end{align}
where the factor $\tfrac{(4\s)^N}{(1+\s)^{2N}}$ is a technical scaling introduced for the subsequent asymptotic analysis. 
Then, $\ww$ is a resonant frequency if and only if the associated wavenumber $k = \ww/v$ satisfies  
\begin{align} \label{cond:reson}
    M_{tot}(k;\s)_{1, 2}\neq 0, \quad M_{tot}(k;\s)_{2, 2}= 0\,.
\end{align}
Moreover, note that $M_{tot}(k, \sigma)$ is an invertible matrix for $\sigma \neq 0$, due to the invertibility of $R$ and $L$ from definition \eqref{def:rlmatrix}. It follows directly from \eqref{cond:reson} that a resonance $\ww \neq 0$ is characterized solely by the condition $M_{tot}(k; \s)_{2, 2} = 0$. By directly expanding the matrix multiplication and computing $M_{tot}$, we establish the following theorem, which serves as a foundation for the subsequent discussions.

 
\begin{theorem}\label{thm: resonant frequency f(z;mu) zeros}
If $0\ne\omega=kv$, then $\omega$ is a resonant frequency if and only if $k$ is a zero of a analytic function $f(k;\s)$ in $k$ defined as 
\begin{equation}\label{eq: f(z; sigma) expansion}
    \begin{aligned}
        f(k;\s):&=M_{tot}(k;\s)_{2, 2}\\
        &=\sum_{\bm\alpha\in\{-1,1\}^{2N-1}}{(-1)^{\sum_{j=1}^{2N}j\epsilon_{\alpha_j,\alpha_{j-1}}}\left(\frac{1-\s}{1+\s}\right)^{\sum_{j=1}^{2N}\epsilon_{\alpha_j,\alpha_{j-1}}}\e^{\i\l\bm\alpha,\bm t\r k}}, 
    \end{aligned}
\end{equation}
where
\begin{align*}
    \bm\alpha=(\alpha_1,\cdots,\alpha_{2N-1})\in\{-1,1\}^{2N-1}\,, \quad \alpha_0=\alpha_{2N}=-1\,, \quad \epsilon_{jk}=\begin{cases}
    1,&j\ne k,\\
    0,&j=k.
\end{cases} 
\end{align*}
\end{theorem}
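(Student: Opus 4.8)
The plan is to compute the $(2,2)$-entry of the matrix product defining $M_{tot}(k;\sigma)$ in \eqref{equ: M_total def} by expanding it as a sum over all paths through the alternating chain of $R$ and $L$ factors. Recall that we already know from \eqref{cond:reson} and the discussion preceding the theorem that $\omega = kv \neq 0$ is a resonance if and only if $M_{tot}(k;\sigma)_{2,2} = 0$ (the condition $M_{tot}(k;\sigma)_{1,2} \neq 0$ being automatic since $M_{tot}$ is invertible for $\sigma \neq 0$). So the entire content of the theorem is the explicit formula \eqref{eq: f(z; sigma) expansion}, and the proof is a bookkeeping computation.

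First I would set up notation for the matrix product. Write $M_{tot}(k;\sigma) = \frac{(4\sigma)^N}{(1+\sigma)^{2N}}\, R(1/\sigma)\, L(t_{2N-1}k)\, R(\sigma)\, L(t_{2N-2}k)\cdots R(1/\sigma)\, L(t_1 k)\, R(\sigma)$. Since each $L(t_j k) = \diag(\e^{\i t_j k}, \e^{-\i t_j k})$ is diagonal, a standard product expansion over index sequences gives
\begin{equation*}
    M_{tot}(k;\sigma)_{2,2} = \frac{(4\sigma)^N}{(1+\sigma)^{2N}} \sum_{\beta} \Big(\prod_{\text{$R$-factors}} R(\cdot)_{\beta_{i+1}\beta_i}\Big)\,\e^{\i k \sum_j \beta\text{-signed } t_j},
\end{equation*}
where $\beta$ ranges over sign sequences indexing the intermediate "slots" between consecutive factors, with the two boundary slots pinned to the value corresponding to the second row/second column. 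The diagonal $L$-factor at slot $j$ contributes $\e^{\pm \i t_j k}$ according to whether the path is on the first or second coordinate there; encoding "first coordinate" as $\alpha_j = +1$ and "second coordinate" as $\alpha_j = -1$, and pinning $\alpha_0 = \alpha_{2N} = -1$ (the boundary constraints from selecting the $(2,2)$ entry), the $L$-contribution becomes exactly $\e^{\i \langle \bm\alpha, \bm t\rangle k}$.

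The crux is then identifying the $R$-factor contributions with the stated power of $\frac{1-\sigma}{1+\sigma}$ and the sign $(-1)^{\sum_j j\,\epsilon_{\alpha_j,\alpha_{j-1}}}$. Each $R(z)$ from \eqref{def:rlmatrix} has entries $\frac{1+z}{2}$ on the diagonal and $\frac{1-z}{2}$ off-diagonal; when the path does not change coordinate across that factor ($\alpha_j = \alpha_{j-1}$, i.e. $\epsilon = 0$) it picks up $\frac{1+z}{2}$, and when it does change ($\epsilon = 1$) it picks up $\pm\frac{1-z}{2}$, where the sign is $-1$ precisely for the off-diagonal entry $R_{2,1}$ (a transition on $e_-$ side) versus $+1$ for $R_{1,2}$; matching parity of the position index accounts for the alternation $R(\sigma)$ vs $R(1/\sigma)$ and yields the $j$-weighted exponent in the sign. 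I would then verify that the product of all chosen entries over the $2N$ $R$-factors, after extracting the common $\frac{1}{2}$ from each (which cancels $2^{2N}$ against a factor in the prefactor), and after using $R(1/\sigma)$'s entries $\frac{1+1/\sigma}{2} = \frac{1+\sigma}{2\sigma}$ and $\frac{1-1/\sigma}{2} = \frac{-(1-\sigma)}{2\sigma}$, collapses — together with the prefactor $\frac{(4\sigma)^N}{(1+\sigma)^{2N}}$ — to exactly $(-1)^{\sum_j j\epsilon_{\alpha_j,\alpha_{j-1}}} \big(\frac{1-\sigma}{1+\sigma}\big)^{\sum_j \epsilon_{\alpha_j,\alpha_{j-1}}}$. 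This cancellation is the reason for the technical normalization factor.

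The main obstacle I expect is the careful accounting of \textbf{two} things simultaneously: (i) the alternation between $R(\sigma)$ and $R(1/\sigma)$ down the chain, since a transition through an $R(1/\sigma)$ contributes an extra $\sigma^{-1}$ relative to one through $R(\sigma)$ and also flips the role of $(1\pm\sigma)$, so one must check that the total power of $\sigma$ (counting the $N$ copies of $R(\sigma)$, the $N$ copies of $R(1/\sigma)$, and the prefactor $(4\sigma)^N$) indeed reduces to a function of $\frac{1-\sigma}{1+\sigma}$ only; and (ii) the sign bookkeeping, verifying that every minus sign from an $R_{2,1}$ entry, combined with the signs hidden in $R(1/\sigma)$'s off-diagonal entries, reorganizes into the clean $j$-weighted formula $(-1)^{\sum_j j\epsilon_{\alpha_j,\alpha_{j-1}}}$. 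A clean way to handle both is to factor each $R(\sigma)$ and $R(1/\sigma)$ as $R(\sigma) = \frac{1+\sigma}{2} \big(\begin{smallmatrix} 1 & \mu \\ \mu & 1\end{smallmatrix}\big)$ with $\mu = \frac{1-\sigma}{1+\sigma}$, and $R(1/\sigma) = \frac{1+\sigma}{2\sigma}\big(\begin{smallmatrix} 1 & -\mu \\ -\mu & 1\end{smallmatrix}\big)$; then the scalar prefactors multiply to $\frac{(1+\sigma)^{2N}}{2^{2N}\sigma^N}$, which is exactly cancelled by $\frac{(4\sigma)^N}{(1+\sigma)^{2N}}$, leaving the product of the $2N$ matrices $\big(\begin{smallmatrix} 1 & \pm\mu \\ \pm\mu & 1\end{smallmatrix}\big)$, and a transition through the $j$-th factor contributes $\mu$ with a sign $(-1)^j$ (since the odd-indexed factors are the $R(1/\sigma)$'s carrying $-\mu$, matching the ordering in \eqref{equ: M_total def}), which telescopes into the claimed sign exponent. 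Once this algebraic reduction is done, reading off \eqref{eq: f(z; sigma) expansion} and concluding analyticity of $f(\cdot;\sigma)$ in $k$ (it is a finite sum of exponentials) is immediate.
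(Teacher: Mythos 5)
Your proposal is correct and follows essentially the same route as the paper, which obtains \eqref{eq: f(z; sigma) expansion} precisely by directly expanding the matrix product in \eqref{equ: M_total def} as a sum over sign paths; your factorization $R(\sigma)=\tfrac{1+\sigma}{2}\bigl(\begin{smallmatrix}1&\mu\\ \mu&1\end{smallmatrix}\bigr)$, $R(1/\sigma)=\tfrac{1+\sigma}{2\sigma}\bigl(\begin{smallmatrix}1&-\mu\\ -\mu&1\end{smallmatrix}\bigr)$ with $\mu=\tfrac{1-\sigma}{1+\sigma}$ is exactly the right way to see why the normalization $\tfrac{(4\sigma)^N}{(1+\sigma)^{2N}}$ cancels. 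One small bookkeeping remark: with the slot indexing forced by $\e^{\i\l\bm\alpha,\bm t\r k}$ (slot $0$ at the right end of the product), the $j$-th $R$-factor is $R(\sigma)$ for odd $j$, so a transition there contributes $(-1)^{j+1}\mu$ rather than $(-1)^{j}\mu$; this does not affect your conclusion because $\alpha_0=\alpha_{2N}=-1$ forces an even number of transitions, whence $(-1)^{\sum_j (j+1)\epsilon_{\alpha_j,\alpha_{j-1}}}=(-1)^{\sum_j j\epsilon_{\alpha_j,\alpha_{j-1}}}$.
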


\begin{example}\label{thm: examples of f(z;mu)}
(i) When \(\sigma = 1\) (namely, \(\delta = r > 0\)), \(f(k; \sigma)\) in (\ref{eq: f(z; sigma) expansion}) is given by  
\[
f(k;1)=\e^{-\i||\bm t||_1k}.
\]
In this case, the scattering problem (\ref{equ: scattering problem}) has only the trivial resonance $\omega = 0$, as $e^z \neq 0$ for any $z \in \C$.


(ii) When $N=1$, there is only one resonator. $f(k;\s)$ in (\ref{eq: f(z; sigma) expansion}) is given by 
\[
f(k;\s)=\e^{-\i r\ell_1k}-\left(\frac{1-\s}{1+\s}\right)^2\e^{\i r\ell_1k}.
\]
It follows that if \(\sigma = 1\), there is only the trivial resonance \(\omega^* = 0\), and that in the case of \(\sigma \neq 1\), all resonances are given by
\[
\ww^*=0,\q \omega_n=\frac{v}{r\ell_1}\left(n\pi+\i \ln\left|\frac{1-\s}{1+\s}\right|\right),\ n\in\Z.
\]
This implies that as \(\delta \to 0_+\) or \(\delta \to +\infty\), \(\omega_n \to \tfrac{vn  \pi}{r\ell_1}\). Hence, only \(\omega^*\) and \(\omega_0\) are subwavelength resonant frequencies. 
When \(\delta \to r\), all non-trivial resonant frequencies tend to \(\infty\) (since the imaginary parts of all \(\omega_n\) uniformly tend to \(-\infty\)). In other words, there is no non-trivial resonant frequency \(\omega(\delta)\) that depends continuously on \(\delta\) over \((0, +\infty)\). 
\end{example}

To proceed, we first recall some fundamental concepts from the theory of entire functions. Consider the following general trigonometric polynomial:
\begin{align}\label{equ: tri polynomial}
P(z) := \sum_{j=1}^n a_j \e^{\i \lambda_j z}, \quad \text{with } \lambda_1 < \lambda_2 < \cdots < \lambda_n\ \text{ and }\ 0 \neq a_j \in \mathbb{C}, \ 1 \leq j \leq n,
\end{align}
which is a canonical example of both \textit{almost-periodic functions} and \textit{entire functions of exponential type} \cite{levin1964zeros}.
For an almost-periodic function \( g \), one can define its mean value $\mathfrak{m}(g)$ and Fourier coefficients $a(\lambda)$, respectively, by 
\[
\mathfrak{m}(g) := \lim_{T \to +\infty} \frac{1}{2T} \int_{-T+\alpha}^{T+\alpha} g(t)  \mathrm{d}t, \q a(\lambda) := \mathfrak{m}\left( g(x) e^{-i\lambda x} \right), \quad \lambda \in \mathbb{R}, 
\]
where the convergence is uniform in \( \alpha \in \mathbb{R} \). 
These coefficients \(a(\lambda)\) are non-zero for at most a countable set of \(\lambda\). This set of \(\lambda\) constitutes the \textit{spectrum} of \(g\), denoted by \(\Lambda_g\). We refer the reader to \cite{levin1964zeros} for a detailed discussion. The following lemma, taken from \cite{levin1964zeros}, plays a crucial role in our analysis. 

\begin{lemma}
\label{lem:almost_periodic}
Let \( g \) be an entire almost-periodic function of exponential type.
\begin{enumerate}
    \item All the zeros of \( g \) lie in a horizontal strip parallel to the real axis if and only if the spectrum \( \Lambda_g \) satisfies
    \[
    \inf \Lambda_g \in \Lambda_g \quad \text{and} \quad \sup \Lambda_g \in \Lambda_g.
    \]
    
    \item Let \( m_g(x_1, x_2; y_1, y_2) \) denote the number of zeros of \( g \) in the rectangle \( [x_1, x_2] \times [y_1, y_2] \). If the spectrum \( \Lambda_g \) is bounded, then the linear density of zeros in a horizontal strip:
    \begin{align}\label{equ: m_g(y1,y2) def}
    m_g(y_1, y_2) := \lim_{x_2 - x_1 \to +\infty} \frac{m_g(x_1, x_2; y_1, y_2)}{x_2 - x_1},
    \end{align}
    satisfies
    \[
    \lim_{\substack{y_1 \to -\infty \\ y_2 \to +\infty}} m_g(y_1, y_2) = \frac{d}{2\pi},
    \]
   where \(d\) is the length of the smallest interval containing the spectrum set \(\Lambda_g\), or equivalently, $\sup \Lambda_g - \inf \Lad_g$. 
\end{enumerate}
\end{lemma}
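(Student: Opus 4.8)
The plan is to regard both statements as classical facts about entire almost-periodic functions of exponential type and to follow the approach of \cite{levin1964zeros}. The whole argument will rest on one structural fact: the growth of $g$ as $\im z \to \pm\infty$ is controlled by the extreme points $\mu_- := \inf\Lambda_g$ and $\mu_+ := \sup\Lambda_g$ of its spectrum (both finite, since an almost-periodic function of exponential type has bounded spectrum). Indeed, by Bohr's theory $g$ has a Fourier series $g(z)\sim\sum_{\lambda\in\Lambda_g}a(\lambda)\e^{\i\lambda z}$, and since $|\e^{\i\lambda z}|=\e^{-\lambda\,\im z}$, the indicator of $g$ in the vertical directions is $-\mu_-$ (for $\im z\to+\infty$) and $\mu_+$ (for $\im z\to-\infty$); so $\log|g(x+\i y)|$ behaves like $-\mu_- y$ as $y\to+\infty$ and like $\mu_+ y$ as $y\to-\infty$, uniformly in $x$.

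For Part (1), in the ``if'' direction I would assume $\mu_-\in\Lambda_g$ (so $a(\mu_-)\neq 0$) and split off this term: for every $\varepsilon>0$ the sub-series with frequencies in $(\mu_-,\mu_-+\varepsilon]$ is uniformly small, so $g(z)=\e^{\i\mu_- z}\bigl(a(\mu_-)+o(1)\bigr)$ as $\im z\to+\infty$, uniformly in $\re z$; hence $g$ has no zeros in a half-plane $\{\im z>C\}$. Using $\mu_+\in\Lambda_g$ symmetrically confines the remaining zeros to $\{\im z>-C'\}$, so all zeros lie in a horizontal strip. For the converse I would argue that if, say, $\mu_-\notin\Lambda_g$, then no single exponential dominates near $+\i\infty$: the maximum of $|g|$ on $\{\im z=y\}$ still grows like $\e^{-\mu_- y}$, but its lower order is strictly smaller, and a Jensen-type count of zeros in the rectangles $[0,L]\times[y,2y]$ then forces the number of zeros to grow without bound as $y\to+\infty$, so the zeros cannot be trapped in a strip.

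For Part (2), I would fix $y_1<y_2$ and apply the argument principle to the rectangle $Q=[x_1,x_2]\times[y_1,y_2]$, so that $m_g(x_1,x_2;y_1,y_2)=\tfrac{1}{2\pi}\Delta_{\p Q}\arg g$. The two horizontal sides contribute the mean motion (in the sense of Jessen--Tornehave) of the almost-periodic functions $x\mapsto g(x+\i y_j)$; writing $c(y)$ for this mean motion in the $+x$ direction, the net horizontal contribution to $\tfrac{1}{x_2-x_1}\Delta\arg g$ is $c(y_1)-c(y_2)$. The two vertical sides contribute $o(x_2-x_1)$: for fixed $y_1,y_2$ the variation of $\arg g$ over a vertical segment of fixed length is uniformly bounded in its horizontal position, because every sufficiently long $x$-interval contains an almost-period of $g$ on the relevant strip. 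Hence $m_g(y_1,y_2)=\tfrac{1}{2\pi}\bigl(c(y_1)-c(y_2)\bigr)$; by the growth estimates of Part (1), $c(y_1)\to\mu_+$ as $y_1\to-\infty$ and $c(y_2)\to\mu_-$ as $y_2\to+\infty$, giving $m_g(y_1,y_2)\to\tfrac{1}{2\pi}(\mu_+-\mu_-)=\tfrac{d}{2\pi}$.

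The main obstacle will be making ``the extreme frequency dominates'' rigorous for an infinite spectrum: this needs Bohr's approximation theorem together with uniform control of the Fourier tail, and is exactly the place where the machinery of \cite{levin1964zeros} enters; the converse direction in Part (1) is the other subtle point. I would emphasize, however, that for our application the relevant function is $f(k;\sigma)$ from \eqref{eq: f(z; sigma) expansion}, which is a \emph{finite} trigonometric polynomial of the form \eqref{equ: tri polynomial}; there $\mu_\pm$ are automatically attained and all the estimates above reduce to elementary inequalities comparing the extreme terms $\e^{\i\mu_\pm z}$ against the finitely many remaining exponentials, which is all that is needed for the resonance-distribution results in the sequel.
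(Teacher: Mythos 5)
The paper offers no proof of this lemma at all: it is stated as a known result and attributed to \cite{levin1964zeros}, so there is no internal argument to compare against, and what you have written is essentially a reconstruction of the classical proof from that reference. Your outline is the right one: part (1) rests on the fact that when an extreme frequency $\mu_{\pm}=\sup\Lambda_g$ or $\inf\Lambda_g$ is attained, $g(z)\e^{-\i\mu_{\pm}z}$ tends to $a(\mu_{\pm})\neq 0$ uniformly in $\re z$ as $\im z\to\mp\infty$, which excludes zeros from a half-plane; part (2) is the Jessen--Tornehave mean-motion computation via the argument principle on long rectangles, with the correct limits $c(y_1)\to\mu_+$, $c(y_2)\to\mu_-$ and density $(\mu_+-\mu_-)/2\pi=d/2\pi$. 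Two caveats. First, a sign slip in your opening paragraph: as $y\to-\infty$ one has $\log|g(x+\i y)|\sim-\mu_+y=\mu_+|y|$, not $\mu_+y$. Second, the steps you yourself flag as subtle are in fact the entire substance of the lemma and are not supplied: the uniform convergence of $g\e^{-\i\mu_-z}$ for an infinite spectrum requires a Phragm\'en--Lindel\"of/Bochner--Fej\'er argument; the converse of (1) (infimum of the spectrum not attained $\Rightarrow$ zeros escape every strip) needs a genuine Jensen-type estimate rather than the heuristic about ``lower order''; and in (2) the existence of the mean motion $c(y)$ and the $o(x_2-x_1)$ control of the vertical sides require handling zeros near the contour. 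As a citation-level justification this is acceptable, and your closing observation is the operative point for the paper: the function actually used, $f(k;\sigma)$ in \eqref{eq: f(z; sigma) expansion}, is a finite exponential sum of the form \eqref{equ: tri polynomial} whose extreme frequencies $\pm\|\bm t\|_1$ are attained whenever $\sigma\neq1$, so every estimate above reduces to comparing finitely many exponentials, which is all that Theorem \ref{thm: f(z;mu) zeros} needs.
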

With these concepts in hand, we now return to our main discussion. Example (i) shows that \(f(\cdot; 1)\) has no zeros, and thus (\ref{equ: scattering problem}) has only a trivial resonant frequency. However, when \(\sigma \ne 1\), this is not the case. There are countably many zeros of \(f(z; \sigma)\) when \(\sigma \ne 1\), which satisfy the following distribution property. In Figure \ref{fig: |f(k;sigma)|} below, we present a numerical simulation to verify the zero density formula \eqref{eq:zerodensity}. 

\begin{theorem}\label{thm: f(z;mu) zeros}
For fixed $\sigma \ne 1$, there exist constants $C_1(\sigma), C_2(\sigma) \in \mathbb{R}$ such that all zeros of $f(k;\sigma)$ satisfy
\[
C_1(\sigma) < \im k < C_2(\sigma).
\]
If $\delta \in \mathbb{R}_+$, Theorem \ref{thm: resonant frequency property}\,(2) allows us to set $C_2(\sigma) = 0$. Moreover, for any $x_1<x_2$, we define the number of zeros of $f(\cdot;\sigma)$ in the rectangle $[x_1,x_2] \times [C_1(\sigma), C_2(\sigma)]$: 
\[
n(x_1,x_2) = \#\left\{k \in \mathbb{C} \middle| f(k;\sigma) = 0,\ x_1 < \re k < x_2,\ C_1(\sigma) < \im k < C_2(\sigma)\right\}.
\]
Then, we have  
\begin{equation} \label{eq:zerodensity}
    \lim_{x_2 - x_1 \to +\infty} \frac{n(x_1,x_2)}{x_2 - x_1} = \frac{\|\bm{t}\|_1}{\pi}. 
\end{equation}
In particular, $f(\cdot;\sigma)$ possesses countably many zeros and consequently the scattering problem (\ref{equ: scattering problem}) admits countably many resonant frequencies.
\end{theorem}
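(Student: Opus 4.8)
The plan is to apply Lemma \ref{lem:almost_periodic} directly to the function $f(\cdot;\sigma)$, which by Theorem \ref{thm: resonant frequency f(z;mu) zeros} is an explicit trigonometric polynomial of the form \eqref{equ: tri polynomial}. First I would identify the spectrum $\Lambda_{f(\cdot;\sigma)}$. From \eqref{eq: f(z; sigma) expansion}, the exponents appearing are $\langle \bm\alpha, \bm t\rangle$ as $\bm\alpha$ ranges over $\{-1,1\}^{2N-1}$, but with coefficients that vanish unless $\left(\frac{1-\sigma}{1+\sigma}\right)^{\sum \epsilon_{\alpha_j,\alpha_{j-1}}}$ is nonzero — which, for $\sigma \neq 1$ (so $\frac{1-\sigma}{1+\sigma} \neq 0$) and $\sigma$ finite and nonzero, means every $\bm\alpha$ contributes. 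However, distinct $\bm\alpha$ may give the same value of $\langle\bm\alpha,\bm t\rangle$, so one must check the extreme exponents do not cancel. The maximal exponent is $\langle \bm\alpha, \bm t\rangle$ with $\bm\alpha = (1,1,\dots,1)$, giving $\|\bm t\|_1$, and since $\alpha_0 = \alpha_{2N} = -1$ this $\bm\alpha$ forces $\epsilon_{\alpha_1,\alpha_0} = \epsilon_{\alpha_{2N},\alpha_{2N-1}} = 1$ and all interior $\epsilon = 0$, so $\sum_j \epsilon = 2$; it is the \emph{unique} $\bm\alpha$ attaining $\|\bm t\|_1$ because all $t_j > 0$. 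Hence the coefficient of $\e^{\i\|\bm t\|_1 k}$ is exactly $(-1)^{2N + (2N-1)}\left(\frac{1-\sigma}{1+\sigma}\right)^2 \neq 0$. Symmetrically, $\bm\alpha = (-1,\dots,-1)$ uniquely attains the minimal exponent $-\|\bm t\|_1$ with $\sum\epsilon = 0$, giving coefficient $1 \neq 0$. Therefore $\inf\Lambda_{f(\cdot;\sigma)} = -\|\bm t\|_1 \in \Lambda_{f(\cdot;\sigma)}$ and $\sup\Lambda_{f(\cdot;\sigma)} = \|\bm t\|_1 \in \Lambda_{f(\cdot;\sigma)}$.

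With this in hand, part (1) of Lemma \ref{lem:almost_periodic} immediately yields that all zeros lie in a horizontal strip $C_1(\sigma) < \im k < C_2(\sigma)$ for some real constants; when $\delta \in \mathbb{R}_+$, Theorem \ref{thm: resonant frequency property}\,(2) shows every non-trivial resonance (equivalently every zero of $f$, since $f(0;\sigma) = \sum_{\bm\alpha} (-1)^{\cdots}(\cdots)^{\cdots} $ need not vanish but the zero frequency is excluded by construction) has negative imaginary part, so one may take $C_2(\sigma) = 0$. Then part (2) of the lemma applies since the spectrum is bounded: the linear density over the whole strip is $d/(2\pi)$ with $d = \sup\Lambda - \inf\Lambda = 2\|\bm t\|_1$, giving $\|\bm t\|_1/\pi$. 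The only subtlety is that part (2) gives the density in the limit $y_1 \to -\infty$, $y_2\to+\infty$, but since \emph{all} zeros already lie in the finite strip $[C_1,C_2]$, the density $m_f(C_1, C_2)$ equals $m_f(y_1,y_2)$ for all $y_1 \le C_1 < C_2 \le y_2$, hence equals the limiting value $\|\bm t\|_1/\pi$; this identifies $n(x_1,x_2)/(x_2-x_1) \to \|\bm t\|_1/\pi$, which is \eqref{eq:zerodensity}. Countability of the zeros follows since a nonconstant entire function has at most countably many zeros, and the density being a positive finite number shows there are infinitely (hence countably) many; the statement about resonances then follows from Theorem \ref{thm: resonant frequency f(z;mu) zeros}.

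The main obstacle is the bookkeeping in the spectrum computation: one must verify that no cancellation occurs at the two extreme frequencies $\pm\|\bm t\|_1$, which relies crucially on the positivity $t_j > 0$ (so these exponents are attained by a unique multi-index) and on $\sigma \notin \{0, 1, \infty\}$ (so the corresponding coefficients $\left(\frac{1-\sigma}{1+\sigma}\right)^{0}$ and $\left(\frac{1-\sigma}{1+\sigma}\right)^{2}$ are nonzero). A secondary point requiring care is matching the density statement in Lemma \ref{lem:almost_periodic}\,(2) — which is phrased as a limit over expanding horizontal strips — to the fixed finite rectangle in the theorem; this is handled by the observation above that all zeros are confined to $[C_1(\sigma), C_2(\sigma)]$, so enlarging the strip adds no new zeros and the two densities coincide. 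Everything else is a direct citation of Lemma \ref{lem:almost_periodic} and Theorem \ref{thm: resonant frequency property}.
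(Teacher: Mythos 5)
Your proposal is correct and follows essentially the same route as the paper: identify the spectrum of the trigonometric polynomial, verify that the extreme exponents $\pm\|\bm t\|_1$ are each attained by the unique multi-index $\bm\alpha=\pm\bm 1$ (so their coefficients $1$ and $-\bigl(\tfrac{1-\sigma}{1+\sigma}\bigr)^{2}$ do not cancel and are nonzero for $\sigma\neq 1$), and then invoke both parts of Lemma \ref{lem:almost_periodic}, reconciling the expanding-strip limit with the fixed strip exactly as the paper does. The only blemishes are cosmetic: the sign exponent for $\bm\alpha=\bm 1$ is $1+2N$ rather than $2N+(2N-1)$ (same parity, so the conclusion is unaffected), and the exclusion of $\sigma=0$ in your closing remark is unnecessary since the coefficient $\bigl(\tfrac{1-\sigma}{1+\sigma}\bigr)^{2}=1\neq 0$ there as well.
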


\begin{proof}
When $\sigma \ne 1$, the expansion \eqref{eq: f(z; sigma) expansion} implies that at least two terms of $f(k;\sigma)$ are non-zero. Specifically, $f(k;\sigma)$ contains the terms $\e^{-\i\|\bm{t}\|_1 k}$ and $-\bigl(\tfrac{\sigma-1}{\sigma+1}\bigr)^{\!2} \e^{\i\|\bm{t}\|_1k}$, corresponding to the choices $\bm{\alpha} = \bm{1}$ and $\bm{\alpha} = -\bm{1}$, respectively. Furthermore, for any $\bm{\alpha} \in \{-1,1\}^{2N-1}$ with $\bm{\alpha} \ne \pm \bm{1}$, we have
\[
-\|\bm{t}\|_1 < \langle \bm{\alpha}, \bm{t} \rangle < \|\bm{t}\|_1.
\]
Thus we can rewrite $f$ in the form of (\ref{equ: tri polynomial}) with $\lambda_1 = -\|\bm{t}\|_1$ and $\lambda_n = \|\bm{t}\|_1$. The Fourier coefficients of $f$ can be computed as follows: 
\[
a(\lambda) = \frac{1}{2T} \sum_{j=1}^n a_j \lim_{T \to +\infty} \int_{-T+\alpha}^{T+\alpha} \e^{\i(\lambda_j - \lambda)t} \mathrm{d}t = 
\begin{cases}
    a_j, & \lambda = \lambda_j \text{ for some } j, \\
    0, & \text{otherwise}.
\end{cases}
\]
It follows that the spectrum of $f$ is given by $\Lambda_f = \{\lambda_1, \lambda_2, \cdots, \lambda_n\}$. 


By Lemma \ref{lem:almost_periodic}(1), there exist constants \(C_1(\sigma) < C_2(\sigma)\) such that all zeros of \(f\) lie within the strip \(C_1(\sigma) < \im k < C_2(\sigma)\). Let \(m_f(y_1, y_2)\) be defined as in \eqref{equ: m_g(y1,y2) def}. Then, we have \(m_f(y_1, y_2) = m_f(C_1(\sigma), C_2(\sigma))\) for all \(y_1 \leq C_1(\sigma)\) and \(y_2 \geq C_2(\sigma)\).  
Applying Lemma \ref{lem:almost_periodic}(2), we obtain 
\[
m_f(C_1(\sigma), C_2(\sigma)) = \lim_{\substack{y_1 \to -\infty \\ y_2 \to +\infty}} m_f(y_1, y_2) = \frac{\lambda_n - \lambda_1}{2\pi} = \frac{\|\bm{t}\|_1}{\pi}.
\]
Consequently, we have \eqref{eq:zerodensity} by 
\begin{align*}
    \lim_{x_2 - x_1 \to +\infty} \frac{n(x_1, x_2)}{x_2 - x_1} = \lim_{x_2 - x_1 \to +\infty} \frac{m_f(x_1, x_2; C_1(\sigma), C_2(\sigma))}{x_2 - x_1} = m_f(C_1(\sigma), C_2(\sigma)). 
\end{align*}
\end{proof}

\begin{remark}
Theorem \ref{thm: f(z;mu) zeros} reveals a fundamental distinction between the one-dimensional and three-dimensional problems. In three dimensions, the bounds on the imaginary part of the resonant frequencies generally depend on its real part and no single (uniform) line can bound all resonances from below; see \cite{li2025highcontrasttransmissionfabryperottype} for the resonance-free region. In the one-dimensional case, however, we prove the existence of such a uniform lower line for all the resonant frequencies. This result also identifies a so-called resonance-free region, which is of great theoretical interest \cite{dyatlov2019mathematical,moiola2019acoustic}.   
\end{remark}
\begin{figure}[htbp!]
    \centering
    \begin{subfigure}{0.42\textwidth}
        \centering
        \includegraphics[width=\linewidth]{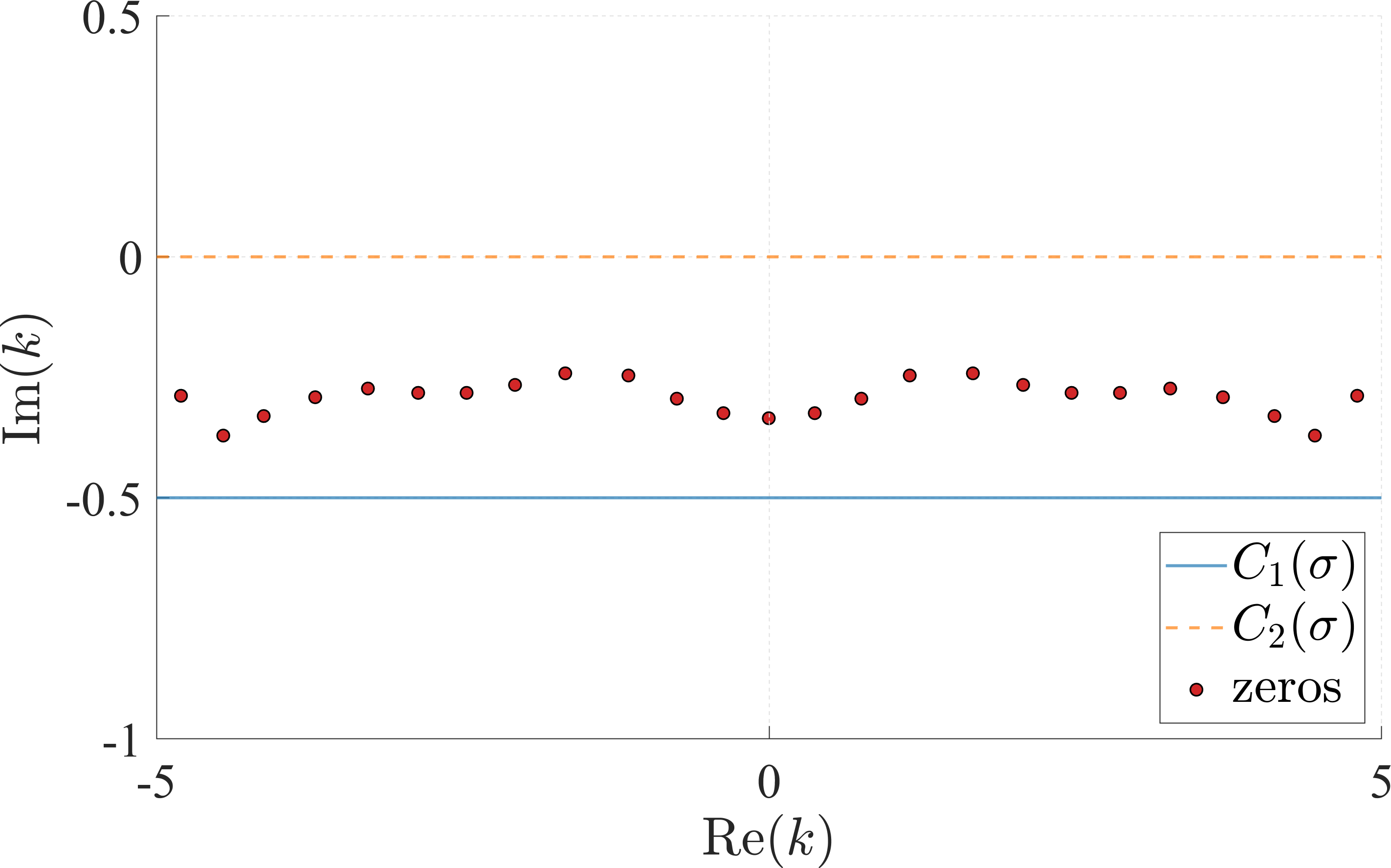}
        \caption{Zeros of $f(k;\sigma)$ with \(|\re k| \leq 5\). }
        \label{fig:zeros_region_5}
    \end{subfigure}
    \begin{subfigure}{0.42\textwidth}
        \centering
        \includegraphics[width=\linewidth]{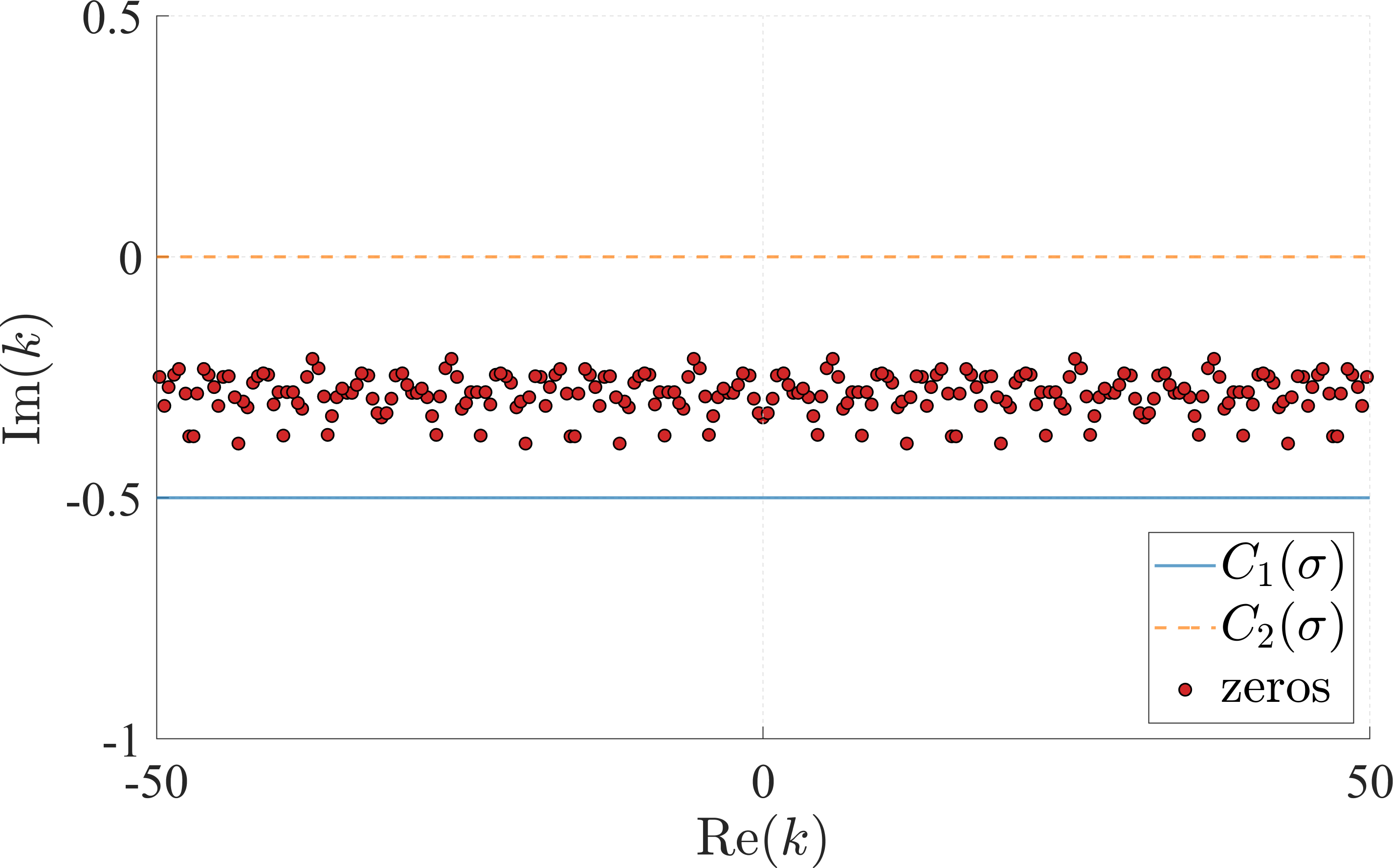}
        \caption{Zeros of $f(k;\sigma)$ with \(|\re k| \leq 50\). }
        \label{fig:zeros_region_50}
    \end{subfigure}
    \caption{Zeros of \(f(k; \sigma)\) for the configuration \(\bm{t} = (0.8, 0.9, 1, 1.1, 1.2, 1.3, 1.4)^\top\) and \(\sigma = 0.8\). All zeros are confined to the strip \(C_1(\sigma) < \im k < C_2(\sigma)\). In the region \(|\re k| \leq 5\) and \(|\re k| \leq 50\), the argument principle yields 25 zeros and 245 zeros, giving a density of 2.5 and 2.45 zeros per unit length, respectively. These empirical densities align closely with the theoretical one \(\|\bm{t}\|_1 / \pi \approx 2.451\) established in Theorem~\ref{thm: f(z;mu) zeros}.}
    \label{fig: |f(k;sigma)|}
\end{figure}

\subsection{Limiting cases of \texorpdfstring{$\delta\to0$}{δ→0} and \texorpdfstring{$\delta\rightarrow \infty$}{δ→∞} } 
We will characterize the limiting distribution of resonant frequencies of (\ref{equ: scattering problem}) when $\d\to0$ and $\d\to\infty$. 

From the definition \eqref{def:rlmatrix}, we first obtain 
\begin{align}\label{equ: R(sigma) limit}
\lim_{\sigma\to0}\frac{2}{1+\s}R(\s)=R_+:=\begin{pmatrix}
    1&1\\1&1
\end{pmatrix},\quad \lim_{\sigma\to0}\frac{2\s}{1+\s}R\left(\frac{1}{\s}\right)=R_-:=\begin{pmatrix}
    1&-1\\-1&1
\end{pmatrix}.
\end{align}
Thus, as $\s\to0$, the matrix $M_{tot}(k;\s)$ in (\ref{equ: M_total def}) uniformly converges to
\begin{align*}
M_{tot}(k;0):=&R_-L(t_{2N-1}k)R_+L(t_{2N-2}k)R_-\cdots R_-L(t_1k)R_+\\
=&(2\i)^{2N-1}\prod_{j=1}^{2N-1}\sin (t_jk)\cdot\begin{pmatrix}
    1&1\\-1&-1
\end{pmatrix},
\end{align*}
on any compact set $K\subset\C$. 
We then have the following lemma.
\begin{lemma}\label{thm: f(k;0) form}
When $\sigma = 0$, the analytic function $f(k;0)$ in \eqref{eq: f(z; sigma) expansion} becomes 
\begin{align*}
    f(k;0)=-(2\i)^{2N-1}\prod_{j=1}^{2N-1}\sin (t_j k).
\end{align*}
Then $k\in \C$ is a zero of $f(\cdot,0)$ if and only if $k\in E: = \cup_{j=1}^{2N-1}(\pi \Z/t_j)$. When $k\in E$, $k$ is a zero of order
\[
n(k):=\#\left\{j\middle|t_j k\in \pi\Z,1\leq j\leq2N-1\right\}.
\]
In particular, all zeros of $f(\cdot;0)$ are real and $0$ is a zero of order $2N-1$.
\end{lemma}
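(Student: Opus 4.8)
The plan is to derive the closed form of $f(\cdot;0)$ directly from the matrix computation carried out immediately before the lemma, and then read off the zero set and its multiplicities from the elementary structure of $\prod_{j}\sin(t_jk)$.

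First, I would invoke the displayed identity preceding the lemma, which states that, uniformly on compact subsets of $\C$,
\[
\lim_{\sigma\to0}M_{tot}(k;\sigma)=M_{tot}(k;0)=(2\i)^{2N-1}\Big(\prod_{j=1}^{2N-1}\sin(t_jk)\Big)\begin{pmatrix}1&1\\-1&-1\end{pmatrix}.
\]
The reason the alternating product $R_-L(t_{2N-1}k)R_+\cdots R_-L(t_1k)R_+$ telescopes to this is that $R_+=\bs{a}\bs{a}^\top$ and $R_-=\bs{b}\bs{b}^\top$ are rank one, with $\bs{a}=(1,1)^\top$ and $\bs{b}=(1,-1)^\top$, and $\bs{a}^\top L(z)\bs{b}=\bs{b}^\top L(z)\bs{a}=\e^{\i z}-\e^{-\i z}=2\i\sin z$; hence each adjacent block $R_{\pm}L(t_jk)R_{\mp}$ contributes the scalar $2\i\sin(t_jk)$, while the two outermost vectors leave the rank-one matrix $\bs{b}\bs{a}^\top$. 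Since $f(k;\sigma)=M_{tot}(k;\sigma)_{2,2}$ for every $\sigma\neq0$ by Theorem \ref{thm: resonant frequency f(z;mu) zeros}, and $f(k;\cdot)$ given by \eqref{eq: f(z; sigma) expansion} is a finite sum of terms, each a sign times a nonnegative integer power of $\tfrac{1-\sigma}{1+\sigma}$ times $\e^{\i\l\bm\alpha,\bm t\r k}$, hence continuous at $\sigma=0$, passing to the limit $\sigma\to0$ yields
\[
f(k;0)=M_{tot}(k;0)_{2,2}=-(2\i)^{2N-1}\prod_{j=1}^{2N-1}\sin(t_jk),
\]
which is the first assertion.

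Next I would treat the zero set and the orders. As $-(2\i)^{2N-1}$ is a nonzero constant, $f(k;0)=0$ if and only if $\sin(t_jk)=0$ for some $1\le j\le 2N-1$, i.e.\ $t_jk\in\pi\Z$, i.e.\ $k\in\pi\Z/t_j$; taking the union over $j$ yields exactly $k\in E$. For the order of a zero $k_0\in E$, I would separate the factors: if $t_jk_0\notin\pi\Z$ then $\sin(t_jk_0)\neq0$, while if $t_jk_0=m_j\pi$ then the addition formula gives $\sin(t_jk)=(-1)^{m_j}t_j(k-k_0)+O((k-k_0)^3)$, a simple zero. Since the order of a zero of a product of analytic functions equals the sum of the orders of the factors, the order of $f(\cdot;0)$ at $k_0$ is $\#\{j:\ t_jk_0\in\pi\Z,\ 1\le j\le 2N-1\}=n(k_0)$.

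The last two claims then follow at once: each $t_j$ is a positive real number (a component of $\bm t\in\R^{2N-1}_{>0}$), so $\pi\Z/t_j\subset\R$ and therefore $E\subset\R$, i.e.\ every zero of $f(\cdot;0)$ is real; and at $k_0=0$ we have $t_j\cdot0=0\in\pi\Z$ for every $j$, hence $n(0)=2N-1$. I do not expect any genuine obstacle here; the only steps requiring mild care are the order-of-zero bookkeeping and the routine continuity-in-$\sigma$ argument that identifies $f(k;0)$ with $M_{tot}(k;0)_{2,2}$.
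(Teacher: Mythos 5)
Your proposal is correct and follows essentially the same route as the paper: the paper obtains $M_{tot}(k;0)=(2\i)^{2N-1}\prod_{j}\sin(t_jk)\left(\begin{smallmatrix}1&1\\-1&-1\end{smallmatrix}\right)$ by the same collapse of the alternating product $R_-L(t_{2N-1}k)R_+\cdots L(t_1k)R_+$ in the display immediately preceding the lemma, and then reads off the $(2,2)$ entry and the multiplicities exactly as you do. Your rank-one factorizations $R_+=\bs{a}\bs{a}^\top$, $R_-=\bs{b}\bs{b}^\top$ and the continuity-in-$\sigma$ remark are just slightly more explicit bookkeeping of the same computation.
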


Since $f(k;\s)$ uniformly converges to $f(k;0)$ on any compact set as $\s\to0$, the Rouch\'e's Theorem implies that there are exactly $n(k)$ zeros of $f(\cdot;\s)$ in a small neighborhood of $k$ for every $k\in E$ when $|\s|$ is small enough. This shows that (\ref{equ: scattering problem}) has exactly $n(k)$ non-trivial resonant frequencies near $kv$ for every $k\in E$ when $\d\to0$. Moreover, by the Newton polygon method as in the next section, we can deduce that \(\omega(\delta)\) is an analytic function of \(\delta^s\) for certain rational \(s\) satisfying $s\geq\frac{1}{n(k)}$. According to Theorem \ref{thm: resonant frequency property}, similar conclusions hold as \(\delta \to \infty\). We summarize our discussion as follows. Figure \ref{fig:zeros_of_f(k;sigma)_with_delta_small} provides a numerical illustration of Theorem \ref{thm: zero near k}. 

\begin{theorem}\label{thm: zero near k}
For $\delta$ small enough (or large enough), 
there exist exactly $n(k^*)$ (counting multiplicities) non-trivial resonant frequencies $\omega = kv$ near $k^*v$, where $k^*\in \cup_{j=1}^{2N-1}\pi\Z/t_j$, with $C_1(\s)<\im k<C_2(\s)$ 
for $C_1(\s), C_2(\s)$ given as in Theorem \ref{thm: f(z;mu) zeros}.  In particular, near each $k^*v$, the resonance $\omega(\delta)$ is an analytic function of $\d^s$ (or $\d^{-s}$, respectively) for certain rational $s\geq\frac{1}{n(k^*)}$.
\end{theorem}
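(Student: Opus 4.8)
The plan is to combine a Rouch\'e-type count of zeros with the Weierstrass preparation theorem and Puiseux's theorem (the Newton polygon method of Appendix~\ref{app: Newton Polygon Method}). First I would record that the explicit series \eqref{eq: f(z; sigma) expansion} exhibits $f(k;\sigma)$ as a finite sum of terms $\pm\bigl(\tfrac{1-\sigma}{1+\sigma}\bigr)^{m}\e^{\i\langle\bm\alpha,\bm t\rangle k}$ with integer exponents $0\le m\le 2N$, so $(k,\sigma)\mapsto f(k;\sigma)$ is jointly holomorphic on $\C\times\{|\sigma|<1\}$ and $f(\cdot;\sigma)\to f(\cdot;0)$ uniformly on compact sets as $\sigma\to 0$. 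Fixing $k^*\in E=\cup_{j=1}^{2N-1}\pi\Z/t_j$, Lemma~\ref{thm: f(k;0) form} gives that $f(\cdot;0)$ has a zero of exact order $n(k^*)$ at $k^*$; hence on a small circle $|k-k^*|=\rho$ one has $\inf|f(\cdot;0)|>0$, and Rouch\'e's theorem yields exactly $n(k^*)$ zeros of $f(\cdot;\sigma)$, counted with multiplicity, inside $|k-k^*|<\rho$ once $|\sigma|$ is small. By \eqref{cond:reson} these zeros are precisely the non-trivial resonant wavenumbers in that disk --- when $k^*=0$ one checks from \eqref{equ: M_total def}, using $R(1/\sigma)R(\sigma)=I$, that $f(0;\sigma)=(4\sigma)^N/(1+\sigma)^{2N}\neq 0$ for $\sigma\neq 0$, so the excluded value $k=0$ is never among them --- and, being zeros of $f(\cdot;\sigma)$, they lie in the strip $C_1(\sigma)<\im k<C_2(\sigma)$ of Theorem~\ref{thm: f(z;mu) zeros}. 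The statements for $\delta\to\infty$ then follow from Theorem~\ref{thm: resonant frequency property}(3): since $\delta$ and $r^2/\delta$ give the same resonances, equivalently $f(\cdot;\sigma)$ and $f(\cdot;1/\sigma)$ share the same zero set, the regime $\delta\to\infty$ is the regime $1/\sigma\to 0$ for the dual parameter and is covered by the preceding argument (with $\sigma$ replaced by $1/\sigma$ throughout).

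For the analytic dependence on $\delta$ I would apply the Weierstrass preparation theorem at $(k^*,0)$: joint holomorphy of $f$ together with the fact that $k\mapsto f(k;0)$ vanishes to order $n(k^*)$ at $k^*$ yields a factorization $f(k;\sigma)=u(k,\sigma)\,W(k,\sigma)$ on a bidisk about $(k^*,0)$, with $u$ holomorphic and nonvanishing and $W$ a Weierstrass polynomial of degree $n(k^*)$ in $k-k^*$ whose coefficients are holomorphic in $\sigma$ and vanish at $\sigma=0$. The zeros of $f(\cdot;\sigma)$ near $k^*$ coincide with those of $W(\cdot;\sigma)$, and by Puiseux's theorem --- equivalently, by reading off slopes and cycles from the Newton polygon of $W$ as in Appendix~\ref{app: Newton Polygon Method} --- these $n(k^*)$ roots split into monodromy cycles, each cycle of length $q$ with $1\le q\le n(k^*)$ (there being at most $n(k^*)$ roots in all) parametrized by $k(\sigma)=k^*+\phi(\sigma^{1/q})$ for some holomorphic $\phi$ with $\phi(0)=0$. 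Since $\sigma=\delta/r$ with $r>0$ fixed and $\omega=vk$, each resonance branch near $k^*v$ has the form $\omega(\delta)=vk^*+v\,\phi\bigl(r^{-1/q}\delta^{1/q}\bigr)$, an analytic function of $\delta^{s}$ with $s=1/q\ge 1/n(k^*)$; by the duality the corresponding $\delta\to\infty$ branches are analytic in $\delta^{-s}$.

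I expect the Newton--Puiseux step to be the main obstacle. One must verify that $f$ is $k$-regular of order exactly $n(k^*)$ at $(k^*,0)$ --- which is precisely the content of Lemma~\ref{thm: f(k;0) form} --- and then carry out the Weierstrass factorization together with the monodromy/Newton-polygon bookkeeping needed to conclude that every branch has ramification index $q\le n(k^*)$, hence $s=1/q\ge 1/n(k^*)$. The remaining points --- the strip localization, the transfer of the count and of the strip under $\sigma\mapsto1/\sigma$ in the $\delta\to\infty$ regime, and the exclusion of the point $k=0$ --- are routine.
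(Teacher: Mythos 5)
Your proposal is correct and follows essentially the same route as the paper: uniform convergence of $f(\cdot;\sigma)$ to $f(\cdot;0)$ plus Rouch\'e's theorem for the count of $n(k^*)$ zeros, the Newton polygon/Puiseux machinery for analyticity in $\delta^{1/q}$ with $q\le n(k^*)$, and Theorem \ref{thm: resonant frequency property}(3) to transfer everything to $\delta\to\infty$. The extra details you supply (Weierstrass preparation as the rigorous backbone of the Newton polygon step, and the check via $R(1/\sigma)R(\sigma)=I$ that $f(0;\sigma)\neq0$ so the trivial resonance is excluded) are refinements of the paper's argument rather than a different approach, and both are sound.
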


\begin{figure}[htbp!]
    \centering
    
    \begin{subfigure}{\textwidth}
        \centering
        \includegraphics[width=0.95\textwidth]{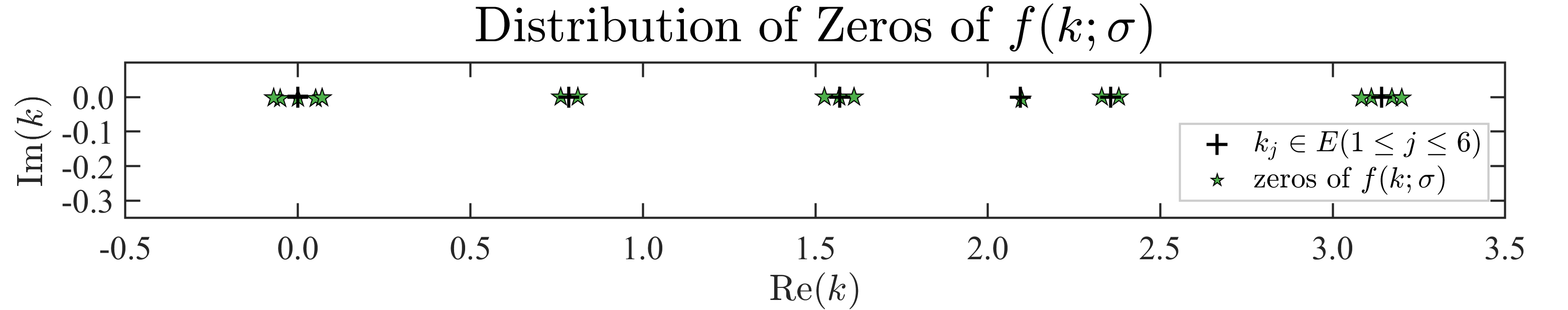}
        \caption{Zeros of \(f(k; \sigma)\) with  $-0.5 \le \re k \le 3.5$, under the configuration \(N=3\), \(\delta=0.01\), \(r=1\), \(\ell_1=1.5\), \(\ell_2=4\), \(\ell_3=1\), \(s_1=2\), and \(s_2=4\). There are $6$ points in $E$ satisfying $-0.5 \le k \le 3.5$, i.e., $k_1=0,k_2=\pi/4\approx0.7854,k_3=\pi/2\approx1.5708,k_4=2\pi/3\approx2.0944,k_5=3\pi/4\approx2.3562,k_6=\pi\approx3.1416$.}
        \label{fig:zeros_of_f(k;sigma)}
    \end{subfigure}

    \begin{subfigure}{0.327\textwidth}
        \centering
        \includegraphics[width=\linewidth]{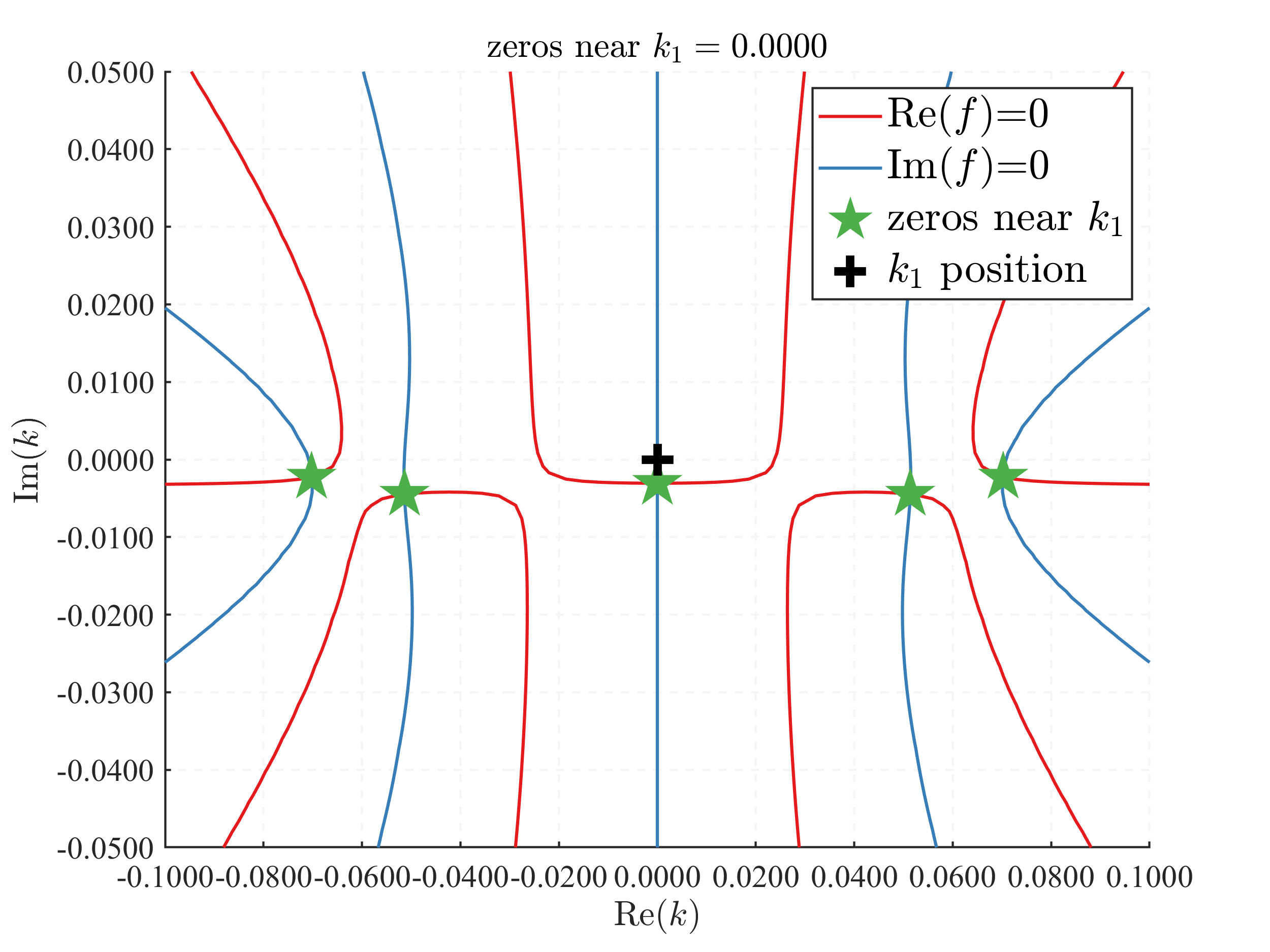}
        \caption{$n(k_1)=5$}
        \label{fig:zeros_near_k1}
    \end{subfigure}
    \begin{subfigure}{0.327\textwidth}
        \centering
        \includegraphics[width=\linewidth]{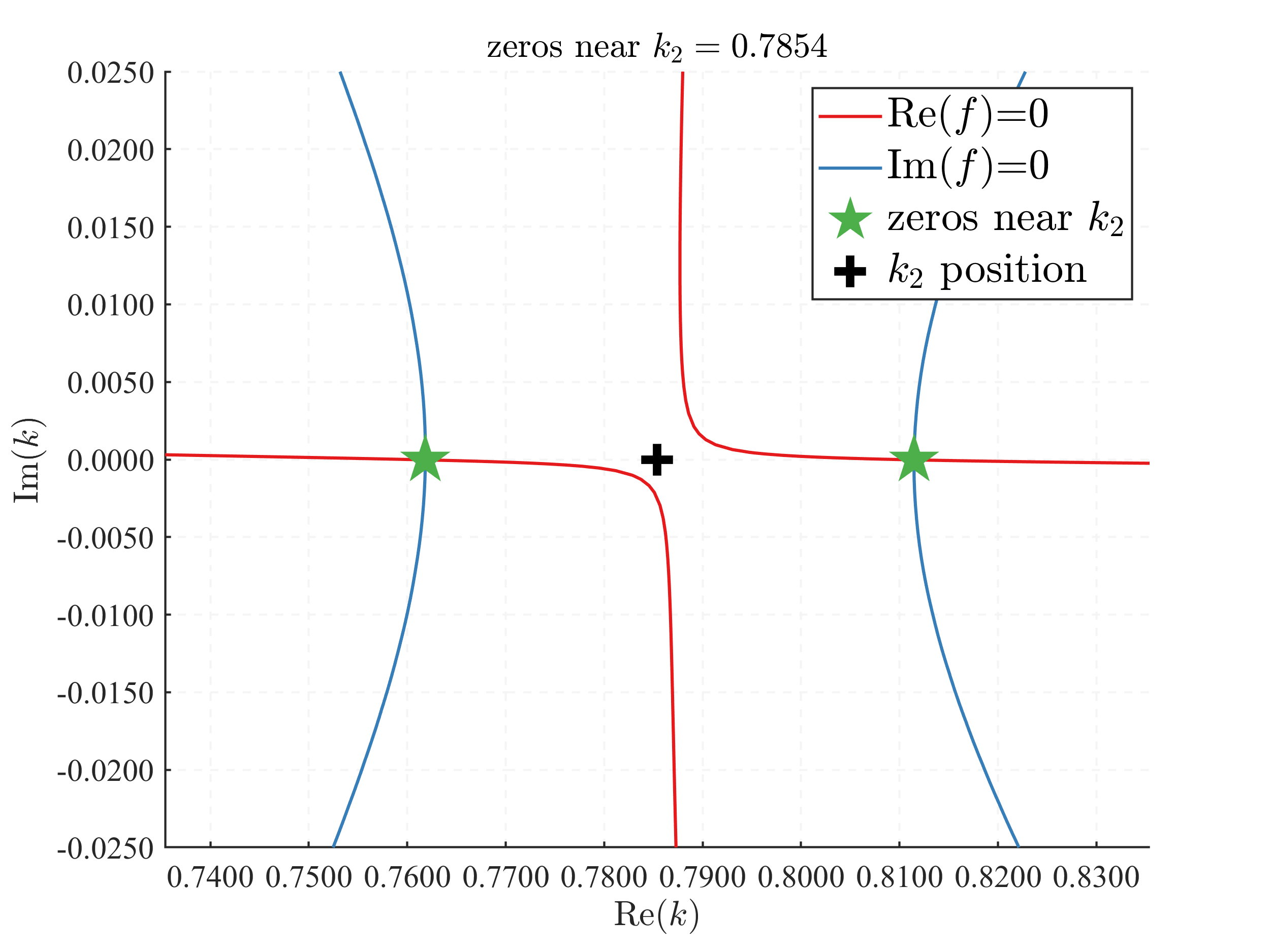}
        \caption{$n(k_2)=2$}
        \label{fig:zeros_near_k2}
    \end{subfigure}
    \begin{subfigure}{0.327\textwidth}
        \centering
        \includegraphics[width=\linewidth]{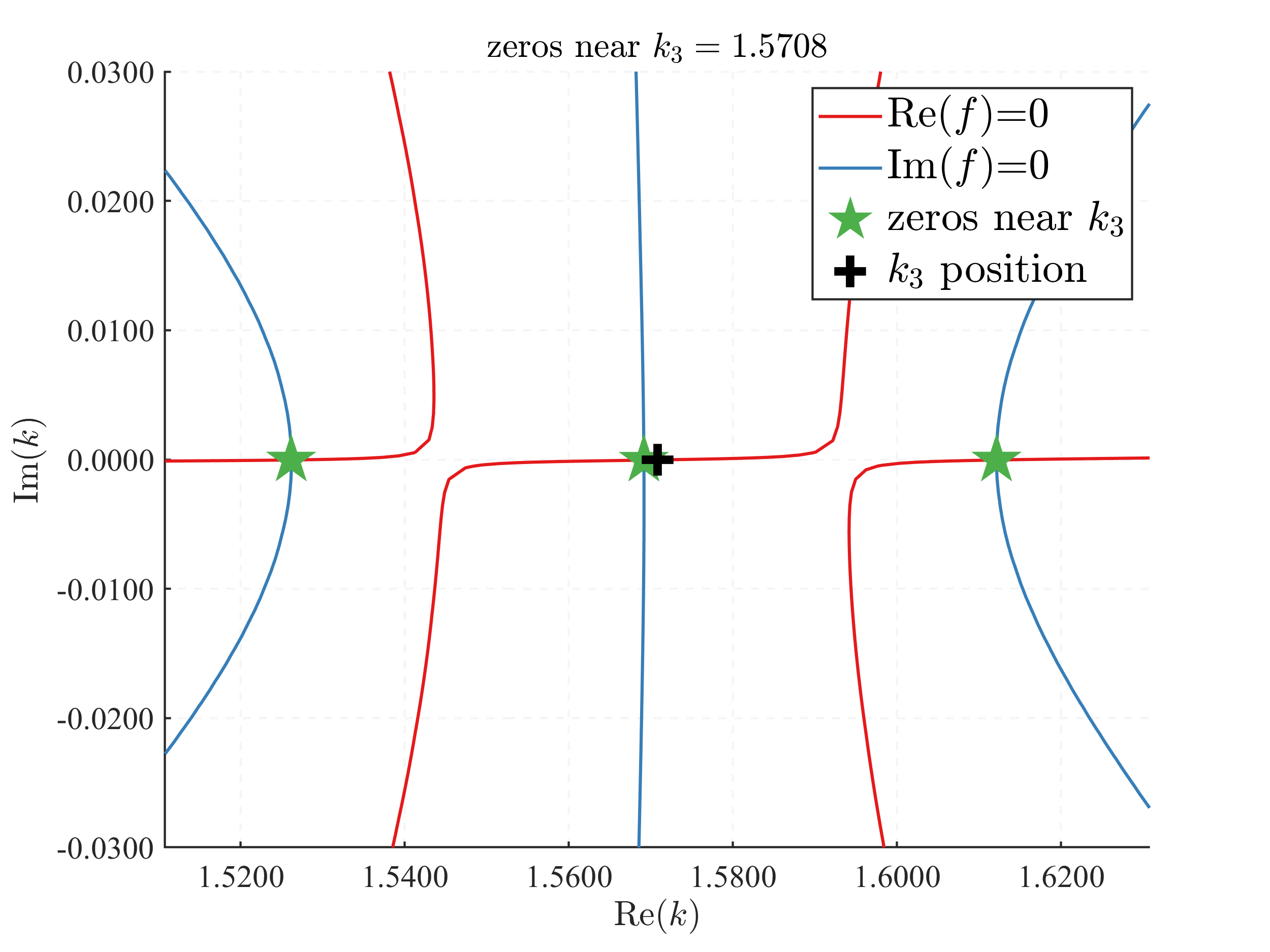}
        \caption{$n(k_3)=3$}
        \label{fig:zeros_near_3}
    \end{subfigure}
    \vspace{0.0cm}
    \begin{subfigure}{0.327\textwidth}
        \centering
        \includegraphics[width=\linewidth]{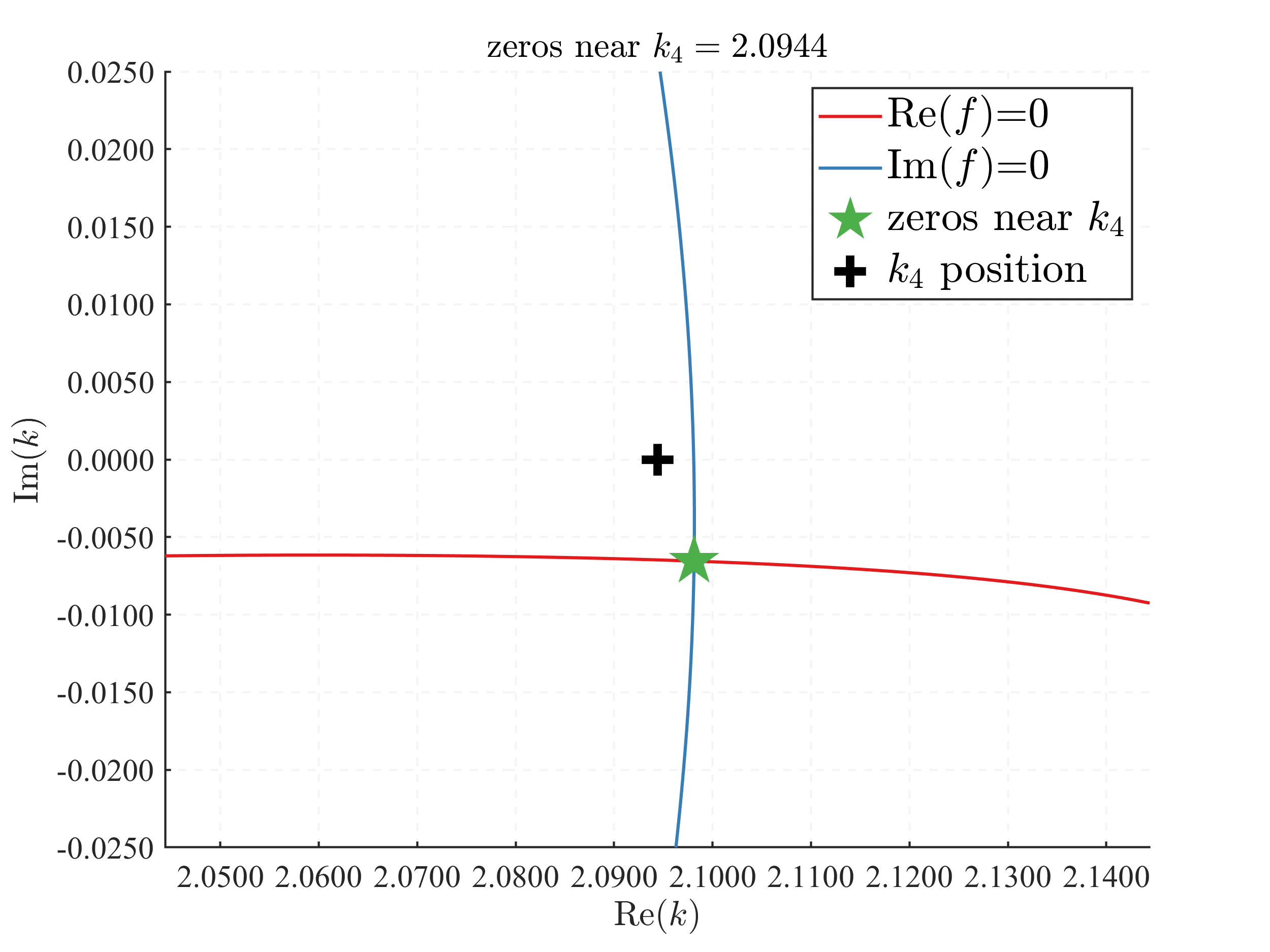}
        \caption{$n(k_4)=1$}
        \label{fig:zeros_near_k4}
    \end{subfigure}
    \begin{subfigure}{0.327\textwidth}
        \centering
        \includegraphics[width=\linewidth]{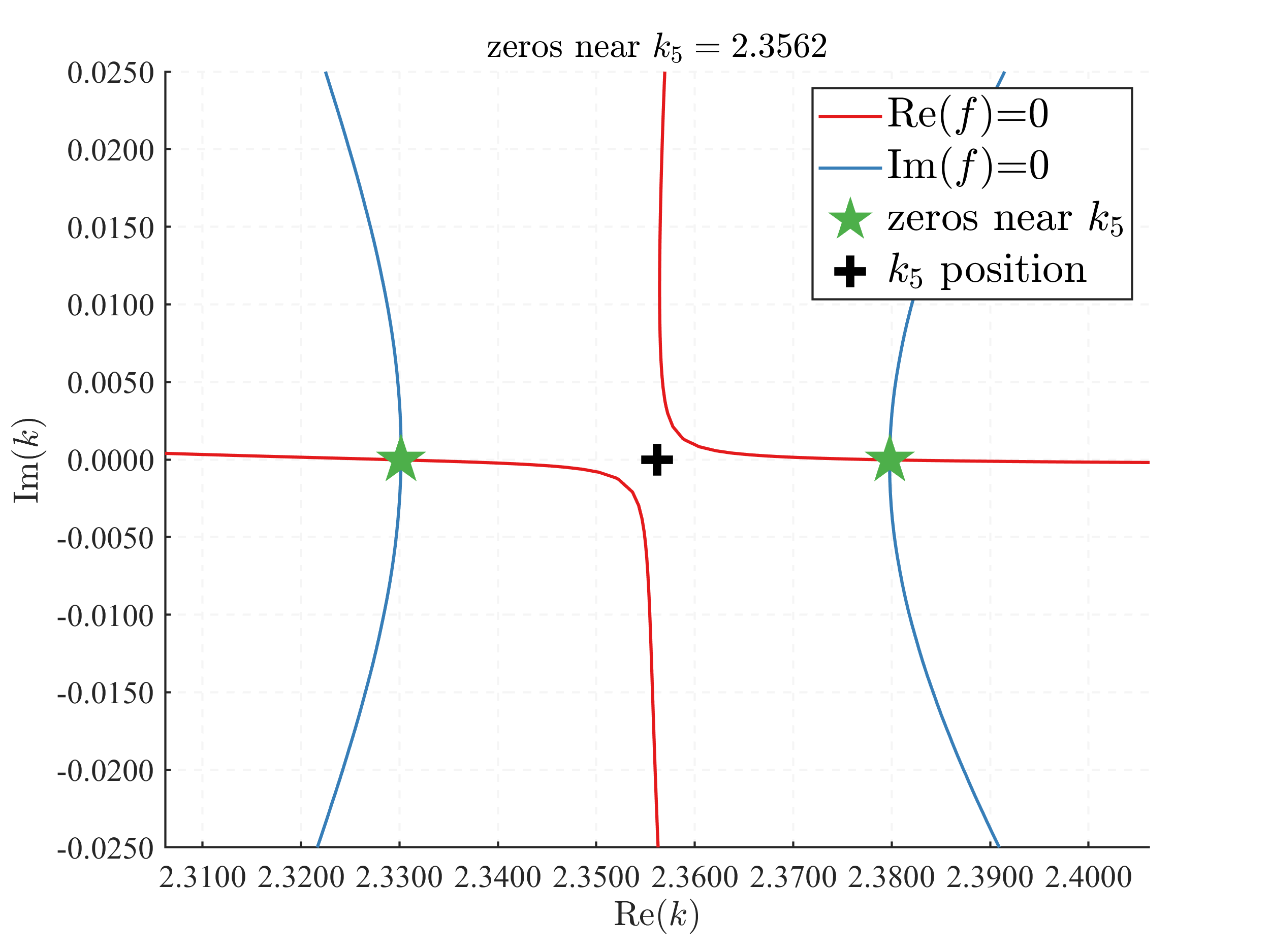}
        \caption{$n(k_5)=2$}
        \label{fig:zeros_near_k5}
    \end{subfigure}
    \begin{subfigure}{0.327\textwidth}
        \centering
        \includegraphics[width=\linewidth]{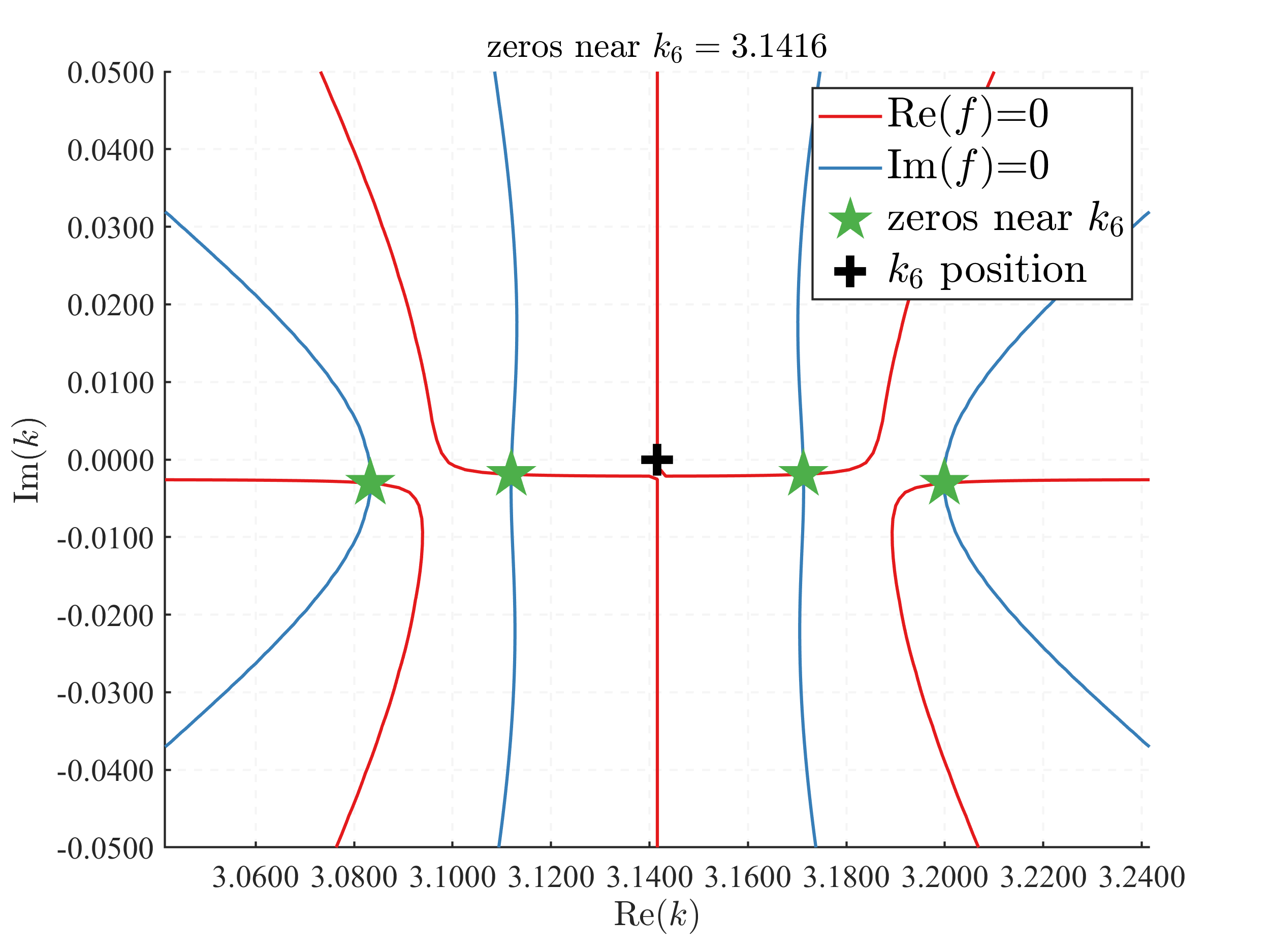}
        \caption{$n(k_6)=4$}
        \label{fig:zeros_near_k6}
    \end{subfigure}
    
    \caption{Zeros of $f(k; \sigma)$ with sufficiently small $\delta$. Panels (b)-(g): Zoom-in views near \(k_1\)-\(k_6\) with different orders \(n(k_i)\). We see that exactly $n(k)$ zeros are located near each $k \in E$, where the set $E$ is given in Lemma \ref{thm: f(k;0) form}.}\label{fig:zeros_of_f(k;sigma)_with_delta_small}
\end{figure}

\subsection{Asymptotic expansions of resonances}
We next analyze the asymptotic behaviors of resonances as \(\delta \to 0\) and \(\delta \to \infty\). 
For simplicity, we focus on the case where \(n(k) = 1\). Notably, this already covers the case in the concurrent work \cite{li2025highcontrasttransmissionfabryperottype}, where asymptotic expansions of resonances are derived for a single resonator in three dimensions. Compared to their results, our asymptotic formulas for the one-dimensional case are more explicit. For cases where $n(k_0) > 1$, asymptotic analysis can in principle be carried out using the Newton polygon method, as presented in the next section for subwavelength resonances. However, such computations are operationally cumbersome and will be reported in a forthcoming work.


For notational simplicity, we will sometimes use $L_j(k)$ to denote $L(t_j k)$. Let
\begin{equation} \label{def:matrixrs}
    \nu:=\frac{2\sigma}{1+\sigma}, \qquad R:=\begin{pmatrix}
    -1 & 1 \\ -1 & 1
\end{pmatrix}, \qquad S:=\begin{pmatrix}
    0 & -1 \\ 1 & 0
\end{pmatrix}. 
\end{equation}
For the components $\tfrac{4\s}{(1+\s)^2}R\left(\tfrac{1}{\s}\right)L(k)R(\s)$ in $M_{tot}(k;\s)$, we observe that 
\[
\frac{4\s}{(1+\s)^2}PR\left(\frac{1}{\s}\right)L(k)R(\s)P=(R+\nu S)L(k)(R+\nu S),\q \text{with}\ P=\begin{pmatrix}
    -1&&\\
    &&1
\end{pmatrix}.
\]
Therefore, it holds that
\[
PM_{tot}(k;\s)P= G(k;\nu), \quad f(k;\s) = g(k;\nu),
\]
where $G(k;\nu)$ and $g(k;\nu)$ are defined by 
\begin{align} \label{equ: Matrix rep4}
    &G(k;\nu):=(R+\nu S)L_{2N-1}(k)(R+\nu S)L_{2N-2}(k)(R+\nu S)\cdots L_{1}(k)(R+\nu S),\\ \label{equ:Non_Matrix rep3}
    &g(k;\nu):=G(k,\nu)_{2,2}.
\end{align}
For the case when $\delta \rightarrow 0$, from (\ref{equ: Matrix rep4}), we can expand $G(k;\nu)$ and $g(k;\nu)$ as
\begin{equation} \label{equ:G_expession}
    \begin{dcases}
    G(k;\nu):=G_0(k)+G_1(k)\nu+G_2(k)\nu^2+\cdots+G_{2N}(k)\nu^{2N},\\ 
    g(k;\nu):=g_0(k)+g_1(k)\nu+g_2(k)\nu^2+\cdots+g_{2N}(k)\nu^{2N}.
    \end{dcases}
\end{equation}

Using the following easily verifiable identities, with $R_{\pm}$, $R$ and $S$ defined in (\ref{equ: R(sigma) limit}) and \eqref{def:matrixrs}, 
\begin{equation}\label{equ:matrixexpan1}
\begin{aligned}
        &RL_j(k)R = -2\i\sin(t_jk) R\,,\q RL_{j+1}(k)SL_{j}(k)R= 2\cos((t_{j+1}-t_{j})k) R\,,\\
        &SL_{2N-1}(k)R=L_{2N-1}(-k)R_{-}\,, \quad RL_{1}(k)S=R_{+}L_1(-k)\,,
\end{aligned}
\end{equation}
we can explicitly compute $G_1(k)$ in the expansion \eqref{equ:G_expession} as 
\begin{align} \label{equ: G_1(k) rep}
    G_1(k)= & (2\i)^{2N-2}\prod_{j=1}^{2N-2}\sin (t_j k) L(-t_{2N-1}k)R_-+(2\i)^{2N-2}\prod_{j=2}^{2N-1}\sin (t_jk) R_+L(-t_1k) \notag \\
&-2(2\i)^{2N-3}\sum_{j=1}^{2N-2}\cos((t_{j+1}-t_j)k)\prod_{\substack{s=1\\s\ne j,j+1}}^{2N-1}\sin (t_s k) R.
\end{align}

We are now in a position to derive the asymptotic expansions of the resonant frequencies for wavenumbers $k_0$ satisfying $n(k_0) = 1$. Figure \ref{fig:asymptotic_behavior_when_n(k)=1} provides a numerical illustration of Theorem \ref{thm:resonancesexpan1}.

\begin{theorem}\label{thm:resonancesexpan1}
Assume $n(k_0)=1$. When $\d\to0$ (or $\d\to\infty$), the scattering problem (\ref{equ: scattering problem}) has a unique resonance $\ww(\d)$ in a neighborhood of $k_0v$, which is an analytic function of $\d$ (or $\d^{-1}$, respectively) with the first-order asymptotics:
\begin{align}\label{equ: asymptotic_behavior_of_omega(delta)}
\ww(\delta)=  k_0v + \begin{dcases}
   \frac{\cot (s_1k_0)-\i }{r^2\ell_1}v\delta+O(\delta^2),&\pi\mid r\ell_1k_0,\\
       \frac{\cot (s_{N-1}k_0)-\i }{r^2\ell_N}v\delta+O(\delta^2),&\pi\mid r\ell_Nk_0,\\
   \frac{\cot( s_{j-1}k_0)+\cot (s_jk_0)}{r^2\ell_j}v\delta+O(\delta^2),&\pi\mid r\ell_jk_0,\ 1<j<N,\\
   \frac{\cot (r\ell_j k_0)+\cot (r\ell_{j+1}k_0)}{rs_j}v\delta+O(\delta^2),&\pi\mid s_jk_0,\ 1\leq j\leq N-1,
\end{dcases}
\end{align}
as $\d\to0$, and as $\d\to\infty$,
\[
\ww(\delta)= k_0v + \begin{dcases}
    \frac{\cot (s_1k_0)-\i }{\ell_1}\frac{v}{\delta}+O(\delta^{-2}),&\pi\mid r\ell_1k_0,\\
    \frac{\cot (s_{N-1}k_0)-\i }{\ell_N}\frac{v}{\delta}+O(\delta^{-2}),&\pi\mid r\ell_Nk_0,\\
    \frac{\cot (s_{j-1}k_0)+\cot (s_jk_0)}{\ell_j}\frac{v}{\delta}+O(\delta^{-2}),&\pi\mid r\ell_jk_0,\ 1<j<N,\\
    \frac{\cot (r\ell_j k_0)+\cot (r\ell_{j+1}k_0)}{s_j}\frac{vr}{\delta}+O(\delta^{-2}),&\pi\mid s_jk_0,\ 1\leq j\leq N-1.
\end{dcases}
\]
\end{theorem}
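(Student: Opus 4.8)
The plan is to apply the implicit function theorem to the equation $f(k;\sigma) = 0$ near the point $(k_0, \sigma) = (k_0, 0)$, exploiting the assumption $n(k_0) = 1$ which means $k_0$ is a simple zero of $f(\cdot;0)$. First, I would recall from Lemma \ref{thm: f(k;0) form} that $f(k_0;0) = 0$ and, since $n(k_0)=1$, exactly one index $j_0$ satisfies $t_{j_0} k_0 \in \pi\Z$ while $\sin(t_j k_0) \ne 0$ for all $j \ne j_0$. Writing $\s \mapsto \nu = 2\s/(1+\s)$ as in \eqref{def:matrixrs} (an analytic change of variable with $\nu \to 0$ iff $\s \to 0$), and using $f(k;\s) = g(k;\nu) = g_0(k) + g_1(k)\nu + O(\nu^2)$ from \eqref{equ:G_expession}, the point is that $g_0(k) = f(k;0) = -(2\i)^{2N-1}\prod_j \sin(t_j k)$ has a simple zero at $k_0$, so $\partial_k g_0(k_0) \ne 0$. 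Hence the implicit function theorem gives a unique analytic branch $k(\nu)$ with $k(0) = k_0$, and composing with the analytic map $\s \mapsto \nu$ (resp. $\s^{-1} \mapsto \nu$ via the duality $\s \leftrightarrow 1/\s$ from Theorem \ref{thm: resonant frequency property}) yields that $\ww(\d) = k(\nu(\d)) v$ is analytic in $\d$ near $\d = 0$ (resp. in $\d^{-1}$ near $\d = \infty$). I would also need to remark that this branch is the only zero of $f(\cdot;\s)$ near $k_0$, which follows from Rouché's theorem combined with $n(k_0)=1$, as already noted in the discussion preceding Theorem \ref{thm: zero near k}.

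Next, to extract the first-order coefficient, I would differentiate the relation $g(k(\nu);\nu) = 0$ at $\nu = 0$ to get
\begin{align*}
k'(0) = -\frac{g_1(k_0)}{\partial_k g_0(k_0)},
\end{align*}
and then translate back: since $\nu = 2\s/(1+\s) = 2\s + O(\s^2)$ and $\s = \d/r$, we have $\nu = \tfrac{2}{r}\d + O(\d^2)$, so the leading term of $k(\d) - k_0$ is $\tfrac{2}{r} k'(0)\,\d$, and $\ww(\d) - k_0 v = \tfrac{2v}{r} k'(0)\, \d + O(\d^2)$. The computation of $\partial_k g_0(k_0)$ is immediate: only the factor $\sin(t_{j_0} k)$ vanishes, so $\partial_k g_0(k_0) = -(2\i)^{2N-1} t_{j_0}\cos(t_{j_0}k_0)\prod_{j \ne j_0}\sin(t_j k_0)$, with $\cos(t_{j_0}k_0) = \pm 1$. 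The computation of $g_1(k_0)$ is the $(2,2)$-entry of $G_1(k)$ in \eqref{equ: G_1(k) rep} evaluated at $k_0$; here I split into the four cases according to whether $j_0$ corresponds to a boundary resonator length $r\ell_1$ or $r\ell_N$, an interior resonator length $r\ell_j$, or a spacing $s_j$. In the interior/spacing cases only the third (the $R$) term of $G_1$ survives — because the products $\prod_{j \ne j_0}\sin(t_j k_0)$ in the first two terms always retain a vanishing factor unless $j_0 \in \{1, 2N-1\}$ — and $R_{2,2} = 1$; in the two boundary cases the relevant one of the first two terms survives instead, with $L(-t_{j_0}k_0)$ contributing $\e^{\mp \i t_{j_0}k_0} = \pm 1$ on the diagonal and $(R_-)_{2,2} = 1$ or $(R_+)_{2,2}=1$. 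Taking the ratio, the global factors $(2\i)^{2N-1}$, $\pm 1$, and the surviving sine products cancel cleanly, leaving exactly the cotangent expressions in \eqref{equ: asymptotic_behavior_of_omega(delta)} after using $\cos/\sin = \cot$ and $t_{j_0} = r\ell_{j_0}$ or $s_{j_0}$ as appropriate; the scalar $\tfrac{2v}{r}$ combines with the remaining $t_{j_0}$ in the denominator to give $\tfrac{v}{r^2\ell_{j_0}}$ or $\tfrac{v}{rs_{j_0}}$.

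For the $\d \to \infty$ regime I would invoke Theorem \ref{thm: resonant frequency property}(3): the resonances for density ratio $\d$ coincide with those for $r^2/\d$, so setting $\tilde\d = r^2/\d \to 0$ and applying the $\d \to 0$ result with $\tilde\d$ in place of $\d$ gives $\ww = k_0 v + c(k_0)\,\tilde\d + O(\tilde\d^2) = k_0 v + c(k_0) r^2/\d + O(\d^{-2})$, where $c(k_0)$ is the coefficient just computed. The extra factor $r^2$ converts $\tfrac{v}{r^2\ell_{j_0}}$ to $\tfrac{v}{\ell_{j_0}}$ and $\tfrac{v}{rs_{j_0}}$ to $\tfrac{vr}{s_{j_0}}$, matching the stated formulas. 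Analyticity in $\d^{-1}$ follows since $\tilde\d$ is analytic in $\d^{-1}$.

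The main obstacle is the bookkeeping in the third step: correctly identifying, for each of the four positional cases of $j_0$, which terms of $G_1(k)$ in \eqref{equ: G_1(k) rep} do not vanish at $k_0$ and evaluating their $(2,2)$-entries, including tracking the signs $\cos(t_{j_0}k_0) = \pm 1$ and $\e^{\pm \i t_{j_0}k_0} = \pm 1$ and verifying they cancel between numerator and denominator. This is a finite but somewhat delicate case analysis; everything else (the implicit function theorem, the change of variables, the $\d\to\infty$ duality) is routine. I would present the interior-resonator case $1 < j < N$ in full and indicate that the remaining three cases are analogous.
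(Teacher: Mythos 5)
Your overall route is exactly the paper's: pass to $\nu=2\sigma/(1+\sigma)$, apply the implicit function theorem to $g(k;\nu)$ at $(k_0,0)$ using that $n(k_0)=1$ makes $k_0$ a simple zero of $g_0=f(\cdot;0)$, compute $k'(0)=-g_1(k_0)/\partial_k g_0(k_0)$ from the explicit $G_1(k)$ in \eqref{equ: G_1(k) rep}, and dispatch $\delta\to\infty$ via the duality of Theorem \ref{thm: resonant frequency property}(3). The scaling bookkeeping ($\nu=\tfrac{2}{r}\delta+O(\delta^2)$, the factor $\tfrac{2v}{r}$ absorbing the $\tfrac{1}{2t_{j_0}}$) is also correct.

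There is, however, one concrete error in the step you yourself flag as delicate. For the boundary cases $j_0\in\{1,2N-1\}$ you assert that ``the relevant one of the first two terms survives \emph{instead}'' of the third ($R$) term. That is not so: when $j_0=1$, the summand $j=1$ of the third term of $G_1$ contains the product $\prod_{s\neq 1,2}\sin(t_sk_0)$, which does \emph{not} vanish, so the third term contributes $-2(2\i)^{2N-3}(-1)^n\cos(t_2k_0)\prod_{s=3}^{2N-1}\sin(t_sk_0)$ alongside the second term's $(2\i)^{2N-2}(-1)^n\prod_{s=2}^{2N-1}\sin(t_sk_0)$; it is precisely the sum of these two that collapses to $-2(2\i)^{2N-3}(-1)^n\e^{-\i t_2k_0}\prod_{s=3}^{2N-1}\sin(t_sk_0)$ and hence yields $\cot(s_1k_0)-\i$ after dividing by $\partial_k g_0(k_0)$. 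If you drop the third term's contribution as your write-up indicates, the ratio gives only $-\i/(2t_1)$, i.e.\ you would obtain $-\i v\delta/(r^2\ell_1)$ and lose the $\cot(s_1k_0)$ part (similarly for $j_0=2N-1$). The fix is purely local --- in every case the surviving summands of the third term are exactly those with $j\in\{j_0-1,j_0\}\cap\{1,\dots,2N-2\}$, so the boundary cases get one such summand plus the appropriate first/second term --- but as stated your case identification would produce the wrong coefficient in two of the four cases.
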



\begin{proof}
We will focus on proving the case \(\delta \to 0\); the case \(\delta \to \infty\) then follows straightforwardly from Theorem \ref{thm: resonant frequency property}. Moreover, noting that \(\nu = \tfrac{2\sigma}{1+\sigma} = \tfrac{2\delta/r}{1+\delta/r}\) and the relation \(\omega = k v\), it suffices to derive the expansion of \(k(\nu)\) in terms of \(\nu\). 

 According to Lemma \ref{thm: f(k;0) form}, we have 
 \begin{equation*}
      g(k;0)=-(2\i)^{2N-1}\prod_{j=1}^{2N-1}\sin(t_j k).
 \end{equation*}
\(n(k_0) = 1\) implies that there exists an \(n \in \mathbb{Z}\) and \(1 \leq j \leq 2N-1\) such that \(t_jk_0 = n\pi\), while \(\pi \nmid t_sk_0\) for any \(s \neq j\). Thus, we derive 
\begin{align*}
     g(k_0;0)=0,\quad \frac{\p g}{\p k}(k_0;0)=(-1)^{n+1}(2\i)^{2N-1}t_j\prod_{\substack{s=1\\s\ne j}}^{2N-1}\sin (t_s k_0) \ne 0.
\end{align*}
By the implicit function theorem, there exists a unique function \(k(\nu)\) defined in a neighborhood of \(\nu = 0\), which is analytic in \(\nu\) and satisfies \(g(k(\nu); \nu) = 0\) and \(k(0) = k_0\).  Moreover, since
 \[
 \frac{\p g}{\p\nu}(k;0)=g_1(k)=G_1(k)_{2,2},
 \]
 the expansion (\ref{equ: G_1(k) rep}) gives 
\begin{multline*}
    \frac{\p g}{\p \nu}(k_0;0) = \\ 
  (-1)^{n+1}2(2\i)^{2N-3} \cdot  \begin{dcases}
\e^{-\i  t_2k_0}\prod_{s=3}^{2N-1}\sin (t_sk_0), & j = 1,\\
\sin((t_{j-1}+t_{j+1})k_0)\prod_{\substack{s=1\\|s-j|>1}}^{2N-1}\sin (t_sk_0),& 1<j<2N-1,\\
\e^{-\i t_{2N-2}k_0}\prod_{s=1}^{2N-3}\sin (t_sk_0), &j = 2N -1.
 \end{dcases}
\end{multline*}
 It follows that
 \[
 k'(0)=-\frac{\frac{\partial g}{\partial \nu}(k_0;0)}{\frac{\partial g}{\partial k}(k_0;0)}=\begin{dcases}
     \frac{\cot (t_2k_0)-\i}{2t_1},&j=1,\\
     \frac{\cot (t_{j-1} k_0) + \cot (t_{j+1}k_0)}{2t_j},&1<j<2N-1,\\
     \frac{\cot (t_{2N-2}k_0)-\i}{2t_{2N-1}},&j=2N-1.
 \end{dcases}
 \]
Substituting this into the expansion \(k(\nu) = k_0 + k'(0)\nu + O(\nu^2)\), along with the definition of the vector \(\bm t\) in \eqref{def:vectort}, completes the proof.
\end{proof}


\begin{figure}[hbtp!]
    \centering
    \begin{subfigure}{0.45\textwidth}
        \centering
        \includegraphics[width=\linewidth]{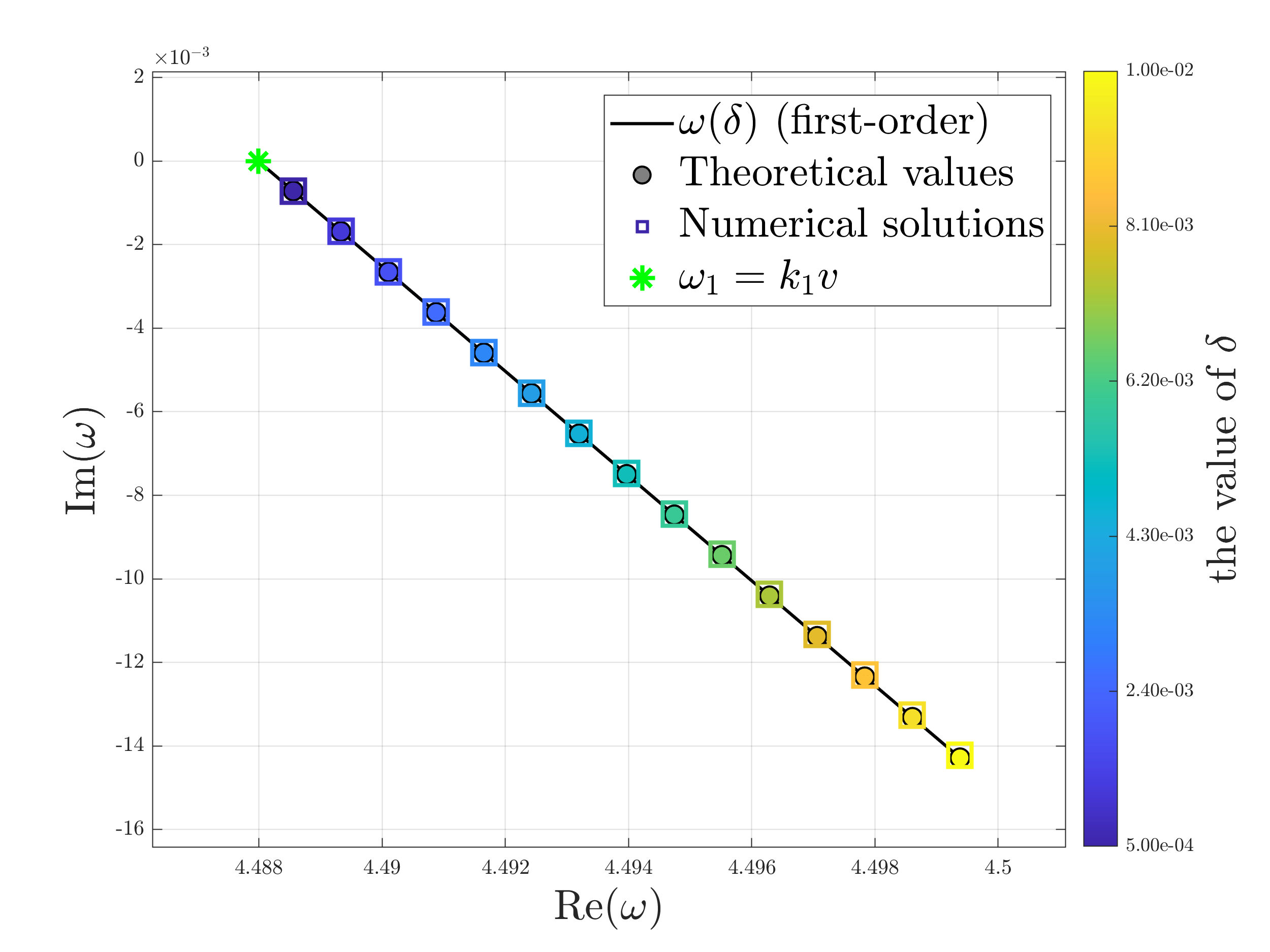}
        \caption{$k_1 = \tfrac{\pi}{0.3} \approx 10.4720$}
        \label{fig:asymoptic_near_k1}
    \end{subfigure}
    \hfill
    \begin{subfigure}{0.45\textwidth}
        \centering
        \includegraphics[width=\linewidth]{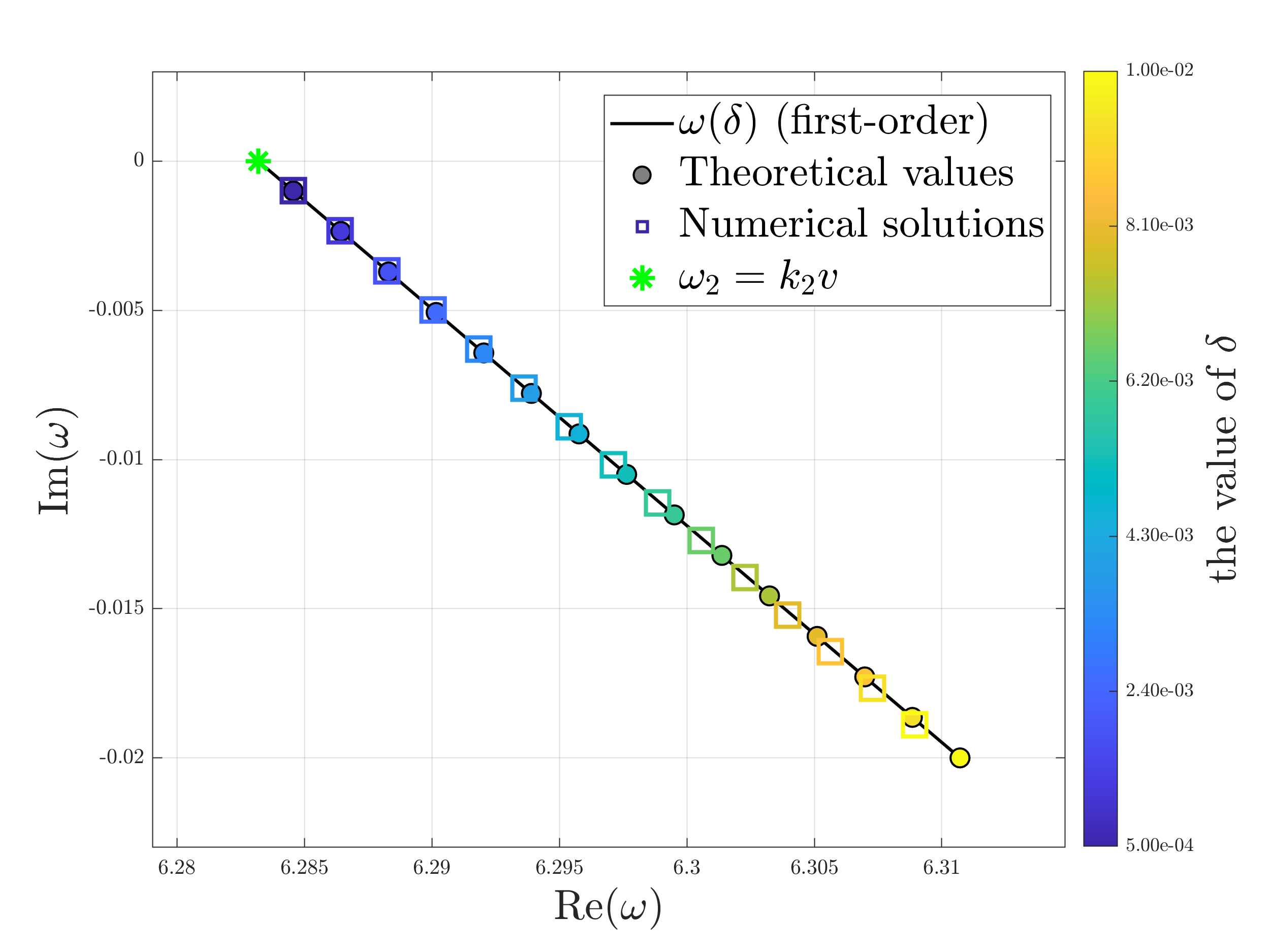}
        \caption{$k_2 = \tfrac{\pi}{0.5} \approx 6.2832$}
        \label{fig:asymoptic_near_k2}
    \end{subfigure}
    \begin{subfigure}{0.45\textwidth}
        \centering
        \includegraphics[width=\linewidth]{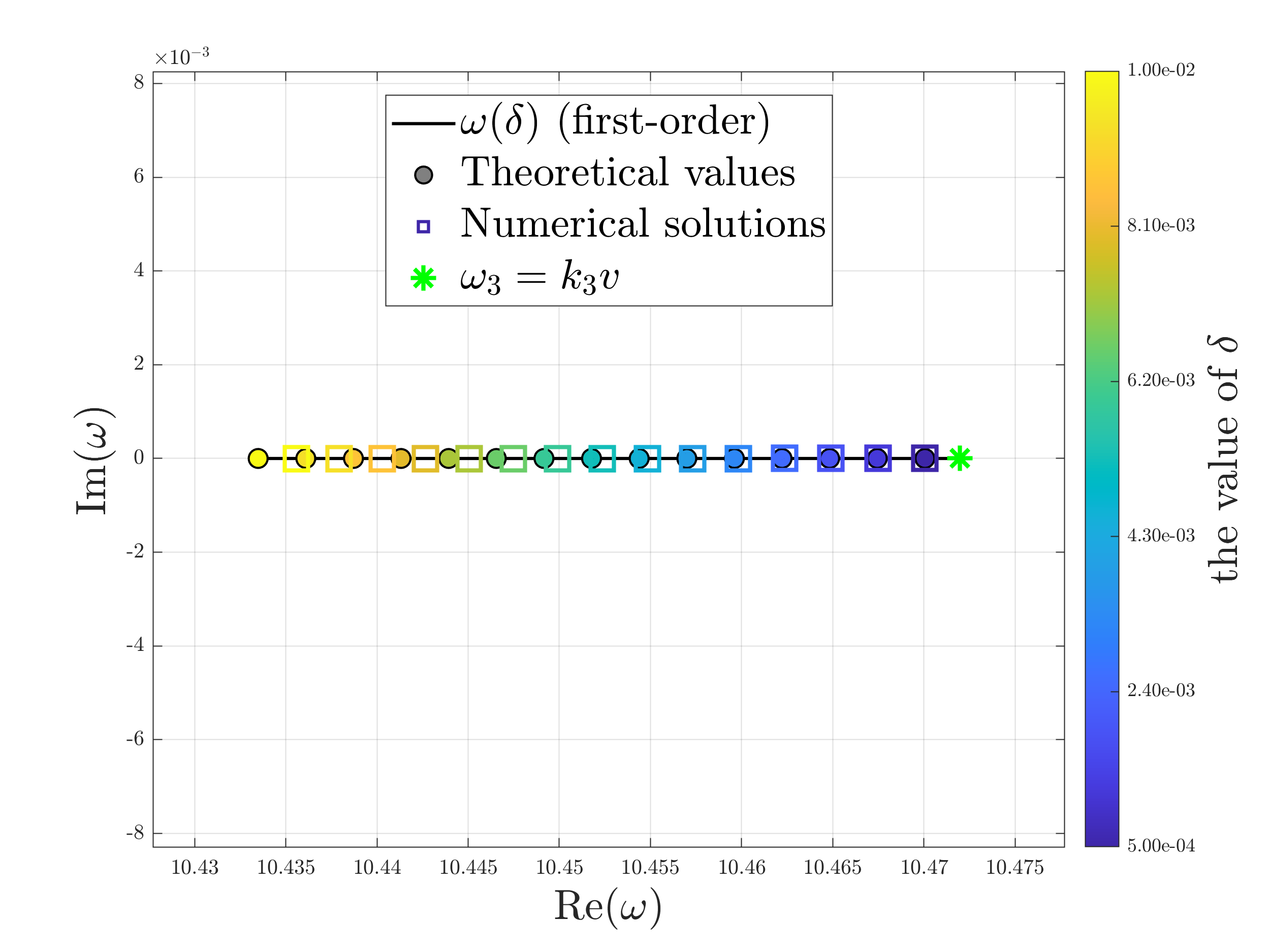}
        \caption{$k_3 = \tfrac{\pi}{0.7} \approx 4.4880$}
        \label{fig:asymptotic_near_k3}
    \end{subfigure}
     \hfill
    \begin{subfigure}{0.45\textwidth}
        \centering
        \includegraphics[width=\linewidth]{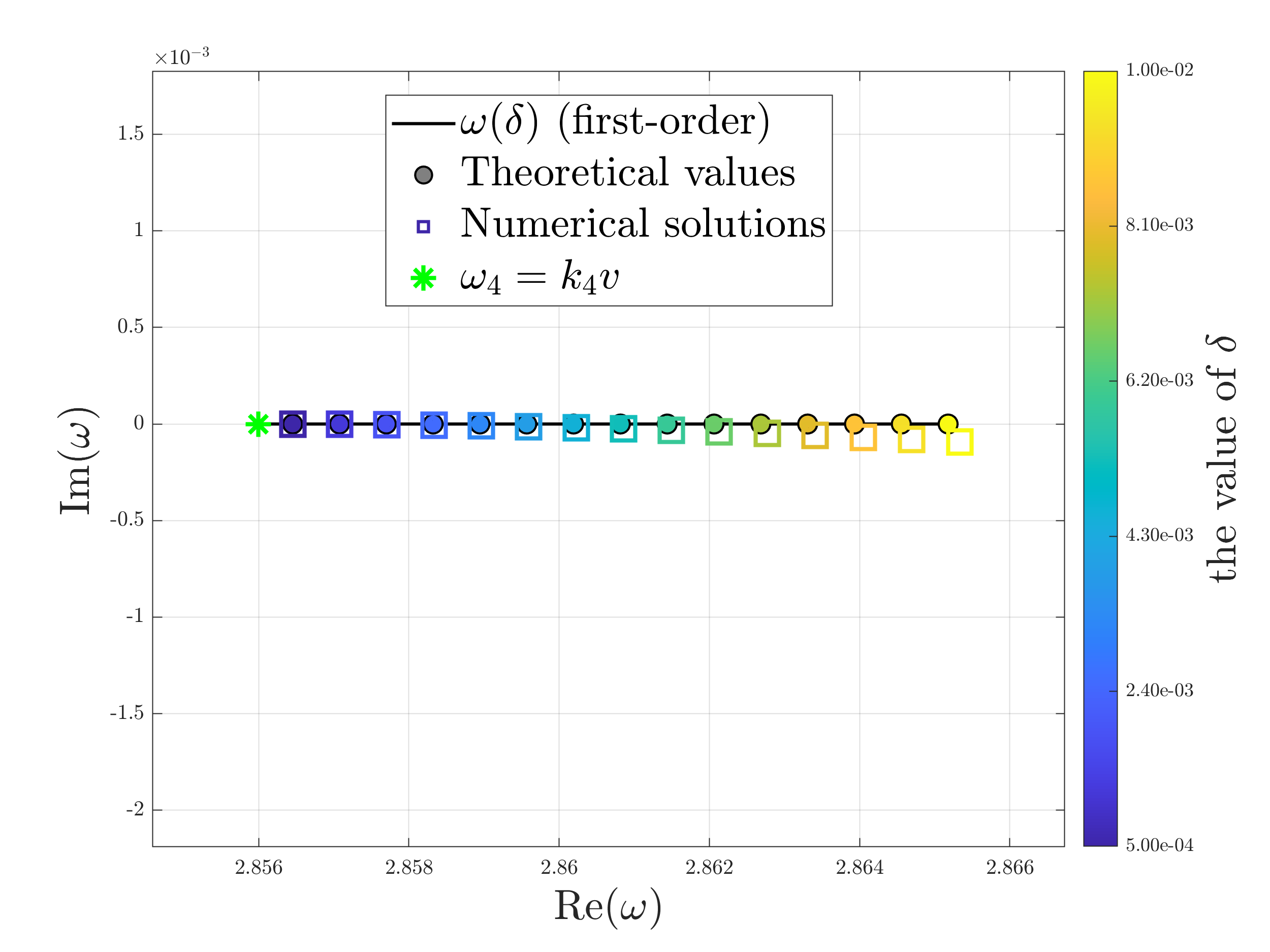}
        \caption{$k_4 = \tfrac{\pi}{1.1} \approx 2.9560$}
        \label{fig:asymptotic_near_k4}
    \end{subfigure}
    \caption{Asymptotic behavior of \(\omega(\delta)\) for sufficiently small \(\delta\) is analyzed near \(\omega_j = k_j v\) with \(n(k_j) = 1\) for \(1 \leq j \leq 4\), under the configuration \(N = 3\), \(r = 1\), \(v = 1\), \(\ell_1 = 0.7\), \(\ell_2 = 0.3\), \(\ell_3 = 0.5\), \(s_1 = 0.2\), and \(s_2 = 1.1\). 
    Panels (a)-(d) correspond to the first through fourth cases in \eqref{equ: asymptotic_behavior_of_omega(delta)}, respectively.}
    \label{fig:asymptotic_behavior_when_n(k)=1}
\end{figure}

\section{Subwavelength resonances and capacitance matrix theory}\label{sec:capacitancematrixtheroy1}

In this section, we study the resonant frequencies in the subwavelength regime; see Definition \ref{defi:subwavelengthresonance}. Specifically, we recover the capacitance matrix theory in \cite{Subwavelength_1D_High-Contrast} for subwavelength resonances using the propagation matrix approach, revealing the relationships between the two frameworks. Our new method is conceptually distinct from the variational approach in \cite{feppon2024subwavelength, Subwavelength_1D_High-Contrast}, which is based on Dirichlet-to-Neumann maps, and is more natural for the one-dimensional problem.


\begin{definition}\label{defi:subwavelengthresonance}
We call \(\omega(\delta) \in \C\) a \textit{subwavelength resonant frequency} if \(\omega(\delta)\) depends continuously on \(\delta \in \C\) for sufficiently small (or large) \(\delta\) and satisfies \(\omega(\delta) \to 0\) as \(\delta \to 0\) (or \(\delta \to \infty\)). 
\end{definition}

\subsection{Subwavelength eigenfrequencies}

In this section, we analyze the asymptotic behavior of the subwavelength resonant frequencies. Unlike the expansion in the previous section, the implicit function theorem is no longer applicable here because zero is a high-order root of the characteristic function $f(k;\sigma)$ in (\ref{eq: f(z; sigma) expansion}) (or equivalently, $g(k; \nu)$ in (\ref{equ:G_expession})). Instead, we employ the Newton polygon method from multivariate complex analysis to obtain the asymptotic expansion (see Appendix \ref{app: Newton Polygon Method}). 

Let 
\begin{equation}\label{equ:defineofx}
    \theta_j=\frac{1}{t_jt_{j+1}}, \quad j=0,1,\cdots,2N-1,
\end{equation}
with $t_0=t_{2N}=1$. We first introduce the expansion of $g_l(k)$ in (\ref{equ:G_expession}) in Proposition \ref{prop:ex_g} below, whose proof is deferred to Appendix \ref{sect:proofexpangl} for ease of exposition. 

\begin{proposition} \label{prop:ex_g}
When $k\to0$, for $1\leq l\leq N-1$, we have
    \begin{equation}\label{equ:g_ex}
    \begin{aligned} 
        g_l(k)=\bigl(\prod_{j=1}^{2N-1}t_j\bigr)\Big[ &  2^l(-2\i k)^{2N-1-2l}\sum_{1\leq j_1\prec j_2\prec \cdots\prec j_l\leq 2N-2}\bigl(\prod_{m=1}^{l}\theta_{j_m}\bigr)  \\ 
        &+\theta_02^{l-1}(-2\i k)^{2N-2l}\sum_{2\leq j_1\prec j_2\prec \cdots\prec j_{l-1}\leq 2N-2}\bigl(\prod_{m=1}^{l}\theta_{j_m}\bigr)  \\ 
        &+\theta_{2N-1}2^{l-1}(-2\i k)^{2N-2l}\sum_{1\leq j_1\prec j_2\prec \cdots\prec j_{l-1}\leq 2N-3}\bigl(\prod_{m=1}^{l}\theta_{j_m}\bigr)\\
        & + O(k^{2N-2l+1})\Big],
    \end{aligned}
    \end{equation}
with $i\prec j$ meaning $j-i>1$, and for $l=0, N$,
    \[
    g_0(k)=\bigl(\prod_{j=1}^{2N-1}t_j\bigr)(-2k\i)^{2N-1}+O(k^{2N+1}), \quad g_N(k)=2^{N}+O(k).
    \]
    
\end{proposition}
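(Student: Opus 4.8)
The plan is to prove Proposition~\ref{prop:ex_g} by expanding the alternating matrix product in \eqref{equ: Matrix rep4}, reducing each resulting word in $R,S$ and the $L_j(k)$ via the elementary identities \eqref{equ:matrixexpan1}, and then tracking orders in $k$. Since $G(k;\nu)$ is the product of the $2N$ factors $R+\nu S$ interleaved with the $2N-1$ diagonal matrices $L_j(k)$, its $\nu^l$-coefficient $G_l(k)$ is the sum, over all index sets $I\subseteq\{0,1,\dots,2N-1\}$ with $|I|=l$, of the word $X_I$ obtained by taking the $i$-th of these $2N$ factors to be $S$ when $i\in I$ and $R$ otherwise; hence $g_l(k)=\sum_{|I|=l}(X_I)_{2,2}$ by \eqref{equ:Non_Matrix rep3}. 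The key algebraic input is that $R=uv^\top$, $R_+=uu^\top$, $R_-=ww^\top$ (with $u=(1,1)^\top$, $v=(-1,1)^\top$, $w=(1,-1)^\top$; cf.\ \eqref{def:matrixrs}, \eqref{equ: R(sigma) limit}) are rank one, that $v^\top u=0$ gives $R^2=0$ and $R_-R=RR_+=0$, and that $R_{2,2}=1$ while $(R_+L_1(-k))_{2,2}=\e^{\i t_1 k}$ and $(L_{2N-1}(-k)R_-)_{2,2}=\e^{\i t_{2N-1}k}$ all tend to $1$ as $k\to0$.

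First I would reduce each word $X_I$ to a scalar times a residual matrix, using \eqref{equ:matrixexpan1} and a few analogous identities. A maximal run of consecutive $R$-factors collapses through $RL_j(k)R=-2\i\sin(t_jk)R$, contributing one factor $-2\i\sin(t_jk)$ per $L_j$ interior to the run. An \emph{interior} $S$, at an index $i\in\{1,\dots,2N-2\}$ flanked by two $R$-runs, is absorbed by $RL_{i+1}(k)SL_i(k)R=2\cos((t_{i+1}-t_i)k)R$, which consumes the two neighbouring $L$-factors $L_i,L_{i+1}$. A \emph{boundary} $S$, at index $0$ or $2N-1$, is handled by $RL_1(k)S=R_+L_1(-k)$ or $SL_{2N-1}(k)R=L_{2N-1}(-k)R_-$, consuming a single neighbouring $L$-factor, after which $R_\pm$ is absorbed forward by the easily checked $RL_j(k)R_+=-2\i\sin(t_jk)R_+$ and $R_-L_j(k)R=-2\i\sin(t_jk)R_-$. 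Degenerate patterns --- mutually adjacent $S$'s, handled by $SL_j(k)S=-L_j(-k)$, and both boundary indices present, where $R_-R=RR_+=0$ --- are dispatched by the same toolkit. In all cases $X_I$ becomes a product of factors $-2\i\sin(t_jk)$ (one per ``free'' $L_j$, each $O(k)$), factors $2\cos((t_a-t_b)k)$ (each $O(1)$), possibly one leftover diagonal factor, times a residual matrix ($R$, or $R_+L_1(-k)$, or $L_{2N-1}(-k)R_-$, or a further degenerate one) whose $(2,2)$-entry tends to $1$ as $k\to0$.

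The heart of the matter is the order count. Put $b:=|I\cap\{0,2N-1\}|\in\{0,1,2\}$. The reduction shows that each interior $S$ converts two $L$-factors into an $O(1)$ cosine, each boundary $S$ converts one into an $O(1)$ exponential, and mutual adjacency of $S$'s only diminishes the number of surviving $\sin$-factors further; hence $X_I=O(k^{2N-1-2l+b})$. Consequently the two leading orders $k^{2N-1-2l}$ and $k^{2N-2l}$ are attained only by (a) $\prec$-separated sets $I\subseteq\{1,\dots,2N-2\}$ with $b=0$, and (b) $\prec$-separated sets $I$ with $b=1$, whose $l-1$ interior indices then lie in $\{2,\dots,2N-2\}$ if $0\in I$, resp.\ in $\{1,\dots,2N-3\}$ if $2N-1\in I$; every other configuration --- those with $b=2$, and those with two elements of $I$ at distance $\le1$ --- is $O(k^{2N-2l+1})$. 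Substituting $\sin(t_jk)=t_jk+O(k^3)$, $\cos=1+O(k^2)$ and the residual $(2,2)$-entry $\to1$, family (a) yields $2^l(-2\i k)^{2N-1-2l}\bigl(\prod_{j=1}^{2N-1}t_j\bigr)\sum_{1\le j_1\prec\cdots\prec j_l\le 2N-2}\prod_m\theta_{j_m}$, the product $\prod_m\theta_{j_m}$ recording exactly that $t_{j_m}$ and $t_{j_m+1}$ are deleted from $\prod_j t_j$ for each absorbed interior $S$; family (b) with boundary index $0$ (resp.\ $2N-1$) yields the $\theta_0$- (resp.\ $\theta_{2N-1}$-)term, using $\theta_0=1/(t_0t_1)=1/t_1$ and $\theta_{2N-1}=1/(t_{2N-1}t_{2N})=1/t_{2N-1}$ with $t_0=t_{2N}=1$ from \eqref{equ:defineofx}. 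Summing gives \eqref{equ:g_ex}. The endpoint cases follow by the same reduction: for $l=0$ only the all-$R$ word survives, so $g_0(k)=\prod_{j=1}^{2N-1}(-2\i\sin(t_jk))=\bigl(\prod_j t_j\bigr)(-2\i k)^{2N-1}+O(k^{2N+1})$; for $l=N$ family (a) is empty (a $\prec$-chain of length $N$ cannot fit in $\{1,\dots,2N-2\}$) while family (b) reduces to the single sets $\{0,2,\dots,2N-2\}$ and $\{1,3,\dots,2N-1\}$, each contributing $2^{N-1}$, whence $g_N(k)=2^N+O(k)$.

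The step I expect to be the main obstacle is making the order count airtight: one must enumerate all degenerate configurations --- adjacent $S$'s, a boundary $S$ adjacent to an interior $S$, two boundary $S$'s --- and verify that each corresponding word is genuinely $O(k^{2N-2l+1})$, i.e.\ that degeneracy never restores a lost power of $k$. This is exactly where the nilpotency relations $R^2=0$ and $R_-R=RR_+=0$, together with $SL_j(k)S=-L_j(-k)$, carry the weight; the matrix reductions of the second step and the combinatorial substitution of the third are then routine once the rank-one structure of $R$ and $R_\pm$ is exploited.
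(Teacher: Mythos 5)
Your proposal is correct and follows essentially the same route as the paper's Appendix B proof: both expand $G_l(k)$ as a sum over the $\binom{2N}{l}$ placements of $S$-factors, collapse each word through the rank-one structure of $R$ (the paper's identity $RMR=\eta(M)R$ is exactly your $v^\top M u$ contraction), and show by an order count that only $\prec$-separated interior configurations and single-boundary configurations contribute at the two leading orders, with the same coefficient bookkeeping via the $\theta_j$. One wording slip worth fixing: adjacency of $S$'s \emph{increases} the number of surviving $\sin$-factors (adjacent $S$'s jointly consume fewer $L$'s, e.g.\ $RL_{j+2}SL_{j+1}SL_jR=2\i\sin((t_{j+2}-t_{j+1}+t_j)k)R$ is $O(k)$ rather than $O(1)$), which is precisely why such words land in $O(k^{2N-2l+1})$ as you correctly conclude --- your phrase ``diminishes the number of surviving $\sin$-factors'' asserts the opposite of what your bound requires.
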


Leveraging these asymptotic expansions, we can recover the capacitance matrix theory in \cite{Subwavelength_1D_High-Contrast} from the propagation matrix method. Recall from 
\cite{Subwavelength_1D_High-Contrast} that the generalized capacitance matrix is defined as: with $\theta_j=\tfrac{1}{t_jt_{j+1}}$ in (\ref{equ:defineofx}),
\begin{align}\label{equ: mathcal C def}
\mathcal{C}:=&\begin{pmatrix}
        \theta_1 & -\theta_1\\
        -\theta_2 & \theta_2+\theta_3 & -\theta_3\\
         & -\theta_4 & \theta_4+\theta_5 & -\theta_5\\
         && \ddots & \ddots & \ddots \\
         &&& -\theta_{2N-4} & \theta_{2N-4} + \theta_{2N-3} & -\theta_{2N-3}\\
         &&&& - \theta_{2N-2} & \theta_{2N-2}
    \end{pmatrix}.
\end{align}
We now introduce several auxiliary lemmas.

\begin{lemma}\label{lem:capamatrixspectrum1}
    For the generalized capacitance matrix $\mathcal{C}$, we have
    \begin{enumerate}
        \item It has $N$ different eigenvalues $0=\lambda_1<\lambda_2<\cdots<\lambda_N$. 
        \item Let $P_N$ be the determinant of $\mathcal{C} - \lambda I$, and $Q_N^i$ the $(i, i)$-cofactor for $i = 1, N$. Then  
        \begin{align}
            & P_N(\lambda,\theta_1,\theta_2,\dots \theta_{2N-2}) = |\mathcal{C}-\lambda I| =  \sum_{l=0}^{N-1} (-\lambda)^{N-l}\sum_{{\substack{1\leq j_1\prec j_2\prec \\\cdots\prec j_l\leq 2N-2}}}\prod_{m=1}^{l}\theta_{j_m},  \label{equ: exp_P_N} \\
            &  Q^1_N(\lambda,\theta_2,\theta_3,\dots \theta_{2N-2}) = [\mathcal{C}-\lambda I]_{11} = \sum_{l=1}^{N} (-\lambda)^{N-l}\sum_{{\substack{2\leq j_1\prec j_2\prec \\\cdots\prec j_{l-1}\leq 2N-2}}}\prod_{m=1}^{l-1}\theta_{j_m}, \notag\\
            &  Q^N_N(\lambda,\theta_1,\theta_2,\dots \theta_{2N-3}) = [\mathcal{C}-\lambda I]_{NN} = \sum_{l=1}^{N} (-\lambda)^{N-l}\sum_{{\substack{1\leq j_1\prec j_2\prec \\\cdots\prec j_{l-1}\leq 2N-3}}}\prod_{m=1}^{l-1}\theta_{j_m}\notag.
        \end{align}
        \item   Suppose that $\lambda\neq 0$ is an eigenvalue of $\mathcal{C}$, and $\bm a=(a_1,a_2,\dots,a_N)^{\top}$, $\bm b=(b_1,b_2,\dots,b_N)$ are the corresponding right and left eigenvectors. Then
    \[
    \frac{a_i b_i}{\sum_{j=1}^{N}a_j b_j}=\frac{Q_N^{i}(\lambda)}{P_N'(\lambda)}, \quad i=1,\ N.
    \]
    \end{enumerate}
\end{lemma}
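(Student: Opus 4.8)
The plan is to prove the three statements about the generalized capacitance matrix $\mathcal{C}$ essentially by direct linear algebra, exploiting the tridiagonal structure. I will treat the $\theta_j$ as fixed positive reals throughout.

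\textbf{Statement (1): simple real eigenvalues.} First I would observe that $\mathcal{C}$ is similar to a symmetric (indeed, real symmetric tridiagonal) matrix. Concretely, $\mathcal{C}$ has entries $\mathcal{C}_{ii} = \theta_{2i-2}+\theta_{2i-1}$ (with $\theta_0$ absent in the first row, so $\mathcal{C}_{11}=\theta_1$ — note the indexing convention in \eqref{equ: mathcal C def}), $\mathcal{C}_{i,i+1} = -\theta_{2i-1}$ and $\mathcal{C}_{i+1,i} = -\theta_{2i}$. Since all off-diagonal entries in symmetric positions have the same sign and none vanish (the $\theta_j > 0$), the standard diagonal conjugation $D\mathcal{C}D^{-1}$ with a suitable positive diagonal $D$ produces a genuine symmetric Jacobi matrix with nonzero sub/super-diagonal entries $-\sqrt{\theta_{2i-1}\theta_{2i}}$. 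This is a classical fact: a Jacobi matrix with nonzero off-diagonals has all eigenvalues real and simple (the eigenvalue equation is a second-order recursion, so an eigenvector is determined by its first entry, forcing geometric multiplicity one; symmetry forces algebraic = geometric). That $0$ is an eigenvalue is immediate because each row of $\mathcal{C}$ sums to zero by construction (it is a graph-Laplacian-type matrix), so $\mathcal{C}\mathbf{1}=0$. That $0$ is the \emph{smallest} eigenvalue follows because $D\mathcal{C}D^{-1}$ — equivalently $\mathcal{C}$ — is positive semidefinite: writing the quadratic form of the symmetrized matrix as a sum of squares $\sum \theta_{2i-1}(\cdot)^2 + \theta_{2i}(\cdot)^2$-type telescoping expression shows nonnegativity, and the kernel is one-dimensional (spanned by the symmetrized constant vector). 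Hence $0=\lambda_1 < \lambda_2 < \cdots < \lambda_N$.

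\textbf{Statement (2): the determinant and cofactor formulas.} I would prove \eqref{equ: exp_P_N} and the two cofactor identities by induction on $N$, expanding the tridiagonal determinant along the last row (or column). The key combinatorial bookkeeping is the "$\prec$" constraint $j_{m+1}-j_m>1$: when one expands a tridiagonal determinant $\det(\mathcal{C}-\lambda I)$, each surviving term corresponds to a matching in the path graph, i.e. a choice of a set of disjoint edges $\{2j_m-1\}$ (contributing $-\theta_{2j_m-1}\cdot(-\theta_{2j_m}) = \theta_{2j_m-1}\theta_{2j_m}$, but with the indexing of \eqref{equ:defineofx} the relevant product is $\prod \theta_{j_m}$ over an index set with gaps $>1$) times $(-\lambda)$ to the power of the number of unmatched vertices. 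This is exactly the structure on the right-hand side of \eqref{equ: exp_P_N}. The cofactors $Q_N^1$ and $Q_N^N$ are the determinants of $\mathcal{C}-\lambda I$ with the first (resp. last) row and column deleted, which is again a tridiagonal matrix of the same shape but with the boundary parameter shifted, yielding the stated sums over $j_1 \geq 2$ (resp. $j_{l-1}\leq 2N-3$) with one fewer $\theta$-factor. I expect the cleanest route is a simultaneous induction tracking $P_N$, $Q_N^1$, and $Q_N^N$ together, using the standard three-term recursion for tridiagonal determinants and matching it against the three-term recursion the right-hand sums manifestly satisfy.

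\textbf{Statement (3): the eigenvector ratio formula.} Here I would use the adjugate. Fix a nonzero eigenvalue $\lambda$; since it is simple, $\mathrm{adj}(\mathcal{C}-\lambda I)$ has rank one and equals $c\, \bm a \bm b^{\top}$ for the right/left eigenvectors $\bm a, \bm b$ and some scalar $c$. Reading the $(i,i)$ entry gives $Q_N^i(\lambda) = [\mathrm{adj}(\mathcal{C}-\lambda I)]_{ii} = c\, a_i b_i$. To pin down $c$ I would differentiate $\det(\mathcal{C}-\lambda I - t)$ or, more directly, use Jacobi's identity $\frac{d}{d\lambda}\det(\mathcal{C}-\lambda I) = -\operatorname{tr}\,\mathrm{adj}(\mathcal{C}-\lambda I) = -c\sum_j a_j b_j$, i.e. $P_N'(\lambda) = -c\sum_j a_j b_j$. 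Combining, $\frac{a_i b_i}{\sum_j a_j b_j} = \frac{c\,a_i b_i}{c\sum_j a_j b_j} = \frac{Q_N^i(\lambda)}{P_N'(\lambda)}$ up to a sign I must track carefully (the $\mathrm{adj}$ of $\mathcal{C}-\lambda I$ versus $\lambda I - \mathcal{C}$ introduces a $(-1)^{N-1}$ in both numerator and denominator, which cancels). This identity actually holds for every $i$, not just $i=1,N$; I only need the two boundary cases because those are the ones with clean closed forms from part (2).

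\textbf{Main obstacle.} The routine parts are statements (1) and (3) — they are textbook Jacobi-matrix and adjugate manipulations. The genuinely fiddly step is the induction in statement (2): getting the "$\prec$"-constrained sums to line up exactly with the tridiagonal cofactor expansion, keeping the signs $(-\lambda)^{N-l}$ and the shifted index ranges for $Q_N^1$ versus $Q_N^N$ straight, and making sure the base cases ($N=1,2$) are handled consistently with the degenerate boundary conventions $\theta_0, \theta_{2N}$ implicit in \eqref{equ: mathcal C def}. That combinatorial alignment is where I would spend most of the care.
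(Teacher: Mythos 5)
Your proposal follows essentially the same route as the paper on all three parts: (1) is handled by the simplicity of eigenvalues of a tridiagonal (Jacobi) matrix with nonvanishing off-diagonals — the paper simply cites Parlett for this, while you additionally supply the symmetrization $\mathcal{C}=V^{-1}C$ and the Laplacian/row-sum-zero argument for $0=\lambda_1$ being the smallest eigenvalue, which the paper leaves implicit; (2) is an induction on tridiagonal determinant recursions in both cases (the paper's specific recursion is a two-term one, $P_N=(\theta_{2N-2}-\lambda)P_{N-1}(\ldots,\theta_{2N-4})+\theta_{2N-3}P_{N-1}(\ldots,\theta_{2N-5},0)$, obtained by a column operation, rather than the plain three-term recursion, but the mechanism is the same); and (3) is identical: rank-one adjugate plus Jacobi's derivative formula.

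Two caveats. First, in (2) your matching-of-the-path-graph picture does not directly produce the $\prec$-constrained sums: the edge contributions $-\theta_{2i-1}\theta_{2i}$ cancel against cross terms from expanding the diagonal entries $\theta_{2i-2}+\theta_{2i-1}-\lambda$, and what survives is a sum over subsets of \emph{single} indices in $\{1,\dots,2N-2\}$ with no two consecutive — so the induction you fall back on is indeed the load-bearing step, as you suspected. Second, in (3) you correctly write Jacobi's formula with its minus sign, $P_N'(\lambda)=-c\sum_j a_jb_j$ (since $\tfrac{\dd}{\dd\lambda}(\mathcal{C}-\lambda I)=-I$), but then drop that sign in the final chain of equalities; tracked honestly, your computation yields $Q_N^i(\lambda)/P_N'(\lambda)=-a_ib_i/\sum_j a_jb_j$ (one can check this on $N=2$ at $\lambda=\theta_1+\theta_2$). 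The paper's own proof makes the mirror-image slip by asserting $P_N'(\lambda)=\operatorname{tr}\left(\operatorname{adj}(\mathcal{C}-\lambda I)\right)$ without the minus sign, so this is a discrepancy you share with the source rather than a defect of your approach, but it is worth resolving rather than waving off as a cancelling $(-1)^{N-1}$ — the sign in question comes from differentiating in $\lambda$, not from the choice between $\operatorname{adj}(\mathcal{C}-\lambda I)$ and $\operatorname{adj}(\lambda I-\mathcal{C})$.
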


\begin{proof}
    The property (1) follows from the fact that $\mathcal{C}$ is a tridiagonal matrix with non-zero off-diagonal elements; see \cite[Lemma 7.7.1]{parlett1998symmetric}.
    
    For property (2), we prove it by mathematical induction. Firstly, $P_1(\lambda, \theta_1) = -\lambda+\theta_1$. Then by adding the last column of $\mathcal{C} - \lambda I$ to its second-to-last column and expanding along the last column, we obtain the following recurrence formula
    \begin{equation*}
        \begin{aligned}
              P_{N}(\lad;\theta_1,\theta_2,\cdots \theta_{2N-2})=(\theta_{2N-2}-\lad)&P_{N-1}(\lad;\theta_1,\theta_2,\cdots \theta_{2N-4})\\
              &+\theta_{2N-3}P_{N-1}(\lad;\theta_1,\theta_2,\cdots \theta_{2N-5},0).
    \end{aligned}
    \end{equation*}
 Using mathematical induction, we derive the expansion \eqref{equ: exp_P_N} of $P_N(\lambda; \theta_1, \dots, \theta_{2N-2})$. Similarly, the expansions of $Q_N^1$ and $Q_N^N$ can also be obtained.
 
   
    For property (3), since $\lambda$ is a simple eigenvalue of $\mathcal{C}$, the matrix $\mathcal{C} - \lambda I$ has rank $N-1$. Then, the adjugate matrix of $\mathcal{C} - \lambda I$ is a rank-1 matrix, and there exists a $0\ne\beta \in \C$ such that $\operatorname{adj}(\mathcal{C}-\lambda I)=\beta\bm a \bm b$.
    Thus, we have
    \[
    Q^i_N(\lambda)=[\mathcal{C}-\lambda I]_{ii}=\beta a_i b_i,\quad i=1,N, \quad P_N^{'}(\lambda)=\operatorname{tr}\left(\operatorname{adj}(\mathcal{C}-\lambda I)\right)=\beta\sum_{j=1}^{N}a_j b_j.
\]
Combining these equations completes the proof.
\end{proof}

\begin{remark}
Note that we can express the generalized capacitance matrix in \eqref{equ: mathcal C def} as $\mathcal{C} = V^{-1}C$, where $C$ is a symmetric matrix given by
\[
C := \begin{pmatrix}
\frac{1}{t_2} & -\frac{1}{t_2} \\
-\frac{1}{t_2} & \frac{1}{t_2} + \frac{1}{t_4} & -\frac{1}{t_4} \\
& -\frac{1}{t_4} & \frac{1}{t_4} + \frac{1}{t_6} & -\frac{1}{t_6} \\
&& \ddots & \ddots & \ddots \\
&&& -\frac{1}{t_{2N-4}} & \frac{1}{t_{2N-4}} + \frac{1}{t_{2N-2}} & -\frac{1}{t_{2N-2}} \\
&&&& -\frac{1}{t_{2N-2}} & \frac{1}{t_{2N-2}}
\end{pmatrix},
\]
and $V := \operatorname{diag}(t_1, t_3, \cdots, t_{2N-1})$. This decomposition implies a direct relation between the right and left eigenvectors. Specifically, if $\bm{a} = (a_1, a_2, \dots, a_N)^{\top}$ and $\bm{b} = (b_1, b_2, \dots, b_N)$ are the right and left eigenvectors of $\mathcal{C}$ associated with the same eigenvalue, respectively, then there holds 
\begin{align*}
    \bm{b}^\top = V \bm{a},\q \text{equivalently,}\q b_j = t_{2j-1} a_j, \quad \text{for } j = 1, 2, \cdots, N.
\end{align*}


\end{remark}

We now present the asymptotic expansions of the scattering resonance $\omega(\delta)$ in the subwavelength regime. In what follows, \(\sqrt{z}\) is defined on \(\C\setminus[0,+\infty)\) with the branch \(\im \sqrt{z}>0\). For $z\in[0,+\infty)$, we define $\sqrt{z}:=\lim_{\varepsilon\to0_+}\sqrt{z+\varepsilon\i}$. We remark that the following result can be generalized to the case where the $v_b$'s differ across the $D_i$'s.

\begin{theorem}\label{thm: subwavelength_resonant_frequencies}
Let $\delta \in \mathbb{C}$. In the cases where $\delta \to 0$ (or $\delta \to \infty$), the scattering problem \eqref{equ: scattering problem} exhibits exactly $2N$ subwavelength resonant frequencies:
    \begin{itemize}
        \item[$\bullet$] a trivial frequency $\omega^*=0$.
        \item[$\bullet$] the first nontrivial eigenfrequency $\ww_1(\delta)$ is an analytic function of $\delta$ (or $\delta^{-1}$), with its leading asymptotic expansion given by 
        \begin{align*}
        &\ww_1(\d)=-2\i \d\frac{v}{r^2\sum_{j=1}^N\ell_{j}}+O(\d^2),\quad\d\to0,\\
        \text{or\quad}&\ww_1(\delta)=-2\i\frac{1}{\d}\frac{v}{\sum_{j=1}^N\ell_j}+O(\d^{-2}),\quad \d\to\infty.
        \end{align*}
        \item[$\bullet$] the remaining \(2N-2\) frequencies are analytic functions of \(\delta^{\frac{1}{2}}\) (or \(\delta^{-\frac{1}{2}}\)), and their leading-order asymptotic expansion is given by 
        \begin{align*}
        &\ww_i^{\pm}(\d)=\pm v\sqrt{\frac{\lad_i\d}{r}}-\i \d\frac{v}{2r^2}\frac{a_{i1}^2+a_{iN}^2}{\sum_{j=1}^{N}a_{ij}^2\ell_{j}}+O(\d^{\frac{3}{2}}), \ \d\to0,\q 2\leq i\leq N,\\
        \text{or\quad}&\ww_i^{\pm}(\d)=\pm v\sqrt{\frac{\lambda_ir}{\d}}-\i \frac{1}{\delta}\frac{v}{2}\frac{a_{i1}^2+a_{iN}^2}{\sum_{j=1}^{N}a_{ij}^2\ell_{j}}+O(\d^{-\frac{3}{2}}),\ \d\to \infty,\q 2\leq i\leq N,
        \end{align*}
        where $0=\lambda_1<\lambda_2<\cdots<\lambda_N$ are eigenvalues of the matrix $\mathcal{C}$ \eqref{equ: mathcal C def} and $\bm a_i=(a_{i1},a_{i2},\cdots a_{iN})^{\top}$ is the associated eigenvectors.
    \end{itemize}
\end{theorem}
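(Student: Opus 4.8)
## Proof Plan for Theorem \ref{thm: subwavelength_resonant_frequencies}

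The plan is to work with the characteristic function $g(k;\nu)=f(k;\sigma)$ (see \eqref{equ:Non_Matrix rep3}), where $\nu=\tfrac{2\sigma}{1+\sigma}=\tfrac{2}{r}\delta+O(\delta^2)$, so that the regime $\delta\to0$ corresponds to $\nu\to0$; the regime $\delta\to\infty$ then follows by replacing $\delta$ with $r^2/\delta$ throughout, which is legitimate by Theorem \ref{thm: resonant frequency property}(3). Since $\delta=0$ is never a resonance whereas $\omega^*=0$ always is, Theorem \ref{thm: zero near k} applied at $k^*=0$ (where $n(0)=2N-1$ by Lemma \ref{thm: f(k;0) form}) already shows that, for $|\delta|$ small, the problem has exactly $2N-1$ non-trivial subwavelength resonances clustering at $k=0$, each analytic in a fractional power of $\delta$; together with $\omega^*=0$ this gives the announced total of $2N$. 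What remains is to pin down these $2N-1$ branches and their leading asymptotics, and this is where the Newton polygon method (Appendix \ref{app: Newton Polygon Method}) enters.

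From Proposition \ref{prop:ex_g}, the order of vanishing of $g_l$ at $k=0$ equals $2N-1$ for $l=0$, equals $2N-1-2l$ for $1\le l\le N-1$, and equals $0$ for $l=N$; hence the Newton polygon of $g(k;\nu)$ at the origin has precisely two edges, one joining $(0,2N-1)$ to $(N-1,1)$ with slope $-2$, and one joining $(N-1,1)$ to $(N,0)$ with slope $-1$. The slope-$(-1)$ edge yields a single branch $k(\nu)=\zeta_0\nu+O(\nu^2)$, analytic in $\nu$ (hence in $\delta$); reading $\zeta_0$ off from the linear term of $g_{N-1}$ and from $g_N(0)=2^N$, and using the identity $\bigl(\prod_j t_j\bigr)\prod_{i=2}^N\lambda_i=\sum_{i=1}^N t_{2i-1}=r\sum_j\ell_j$ — which I would prove by applying the matrix--tree theorem to the weighted path Laplacian $C=V\mathcal C$ of the Remark following Lemma \ref{lem:capamatrixspectrum1}, or by direct recursion on $P_N$ — gives $\omega_1(\delta)=v\zeta_0\nu+O(\delta^2)=-2\i\,\delta\,v/(r^2\sum_j\ell_j)+O(\delta^2)$, the Minnaert branch attached to $\lambda_1=0$. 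The slope-$(-2)$ edge carries the remaining $2N-2$ branches, of the form $k(\nu)=\xi_0\sqrt\nu+c_1\nu+O(\nu^{3/2})$, analytic in $\sqrt\nu$ and hence in $\delta^{1/2}$; the two square roots $\xi_0=\pm\sqrt{\lambda_i/2}$ produce the pair $\omega_i^\pm$, exchanged by $\delta^{1/2}\mapsto-\delta^{1/2}$.

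The heart of the argument is the algebra linking the propagation-matrix expansion to the capacitance matrix $\mathcal C$. Substituting $k=\xi\sqrt\nu$ makes every monomial on the slope-$(-2)$ edge homogeneous of weight $\nu^{N-1/2}$, with edge polynomial $\Phi_0(\xi)$; combining the summation identities of Proposition \ref{prop:ex_g} with the expansion \eqref{equ: exp_P_N} of $P_N$, one checks that $\Phi_0(\xi)=\i\,2^{N-1}\bigl(\prod_j t_j\bigr)P_N(2\xi^2)/\xi$. Thus the nonzero roots of $\Phi_0$ are $\xi_0=\pm\sqrt{\lambda_i/2}$, $2\le i\le N$, simple because the $\lambda_i$ are distinct (Lemma \ref{lem:capamatrixspectrum1}(1)), while the root $\xi_0=0$ is exactly the branch that continues onto the slope-$(-1)$ edge; this yields the leading term $\omega_i^\pm(\delta)=\pm v\sqrt{\lambda_i\delta/r}+\cdots$. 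For the next coefficient, simplicity of $\xi_0$ gives $c_1=-\Phi_1(\xi_0)/\Phi_0'(\xi_0)$, where $\Phi_1$ is the $\nu^N$-coefficient of $g(\xi\sqrt\nu;\nu)$. Proposition \ref{prop:ex_g} gives $\Phi_0'(\xi_0)=\i\,2^{N+1}\bigl(\prod_j t_j\bigr)P_N'(\lambda_i)$; and after the cancellation of $g_N(0)=2^N$ against the constant contribution $\bigl(\prod_j t_j\bigr)2^{N-1}\bigl(\theta_0 Q_N^1(0)+\theta_{2N-1}Q_N^N(0)\bigr)=2^N$ of the top-order pieces of $g_1,\dots,g_{N-1}$, the remaining $\lambda$-dependent parts reassemble (via Lemma \ref{lem:capamatrixspectrum1}(2)) into $\Phi_1(\xi_0)=\bigl(\prod_j t_j\bigr)2^{N-1}\bigl[\theta_0 Q_N^1(\lambda_i)+\theta_{2N-1}Q_N^N(\lambda_i)\bigr]$. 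Feeding these into $c_1$, and using Lemma \ref{lem:capamatrixspectrum1}(3) together with $b_{ij}=t_{2j-1}a_{ij}$, $\theta_0=1/(r\ell_1)$, $\theta_{2N-1}=1/(r\ell_N)$, collapses the cofactor ratio to $\mp(a_{i1}^2+a_{iN}^2)\big/\bigl(r\sum_j a_{ij}^2\ell_j\bigr)$; substituting $k=\xi_0\sqrt\nu+c_1\nu+\cdots$ and $\nu=\tfrac2r\delta+O(\delta^2)$ into $\omega=kv$ then produces the stated expansions, and the $\delta\to\infty$ formulas follow from $\delta\mapsto r^2/\delta$.

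The main obstacle is precisely this combinatorial matching: showing that the coefficients generated by the propagation matrix in Proposition \ref{prop:ex_g} reassemble into $P_N(2\xi^2)$ and into $\theta_0 Q_N^1(\lambda_i)+\theta_{2N-1}Q_N^N(\lambda_i)$, with the exact cancellation of the $2^N$ term, while correctly tracking the powers of $2$ and $\i$ and, especially, the signs in the cofactor--eigenvector relation of Lemma \ref{lem:capamatrixspectrum1}(3) (a sign which must be applied consistently for both the leading and the second-order terms). By contrast, drawing the Newton polygon, verifying the simplicity of the roots $\xi_0$, and translating between $\nu$, $\sigma$ and $\delta$ are routine.
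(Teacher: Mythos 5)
Your proposal follows essentially the same route as the paper's proof: a Newton polygon for $g(k;\nu)$ with the two edges you describe, the balance equation on the square-root edge identified with $P_N(2\xi^2)/\xi$ so that the nonzero roots are $\pm\sqrt{\lambda_i/2}$, the analytic single branch on the other edge giving $\omega_1$, and the second-order coefficient obtained from the simple-root/implicit-function step via $\theta_0 Q_N^1(\lambda_i)+\theta_{2N-1}Q_N^N(\lambda_i)$ and Lemma \ref{lem:capamatrixspectrum1}(3) with $b_{ij}=t_{2j-1}a_{ij}$. Two small blemishes that do not affect the structure: the intermediate ``$\mp$'' in your cofactor ratio should not be there (the $O(\delta)$ correction is even in $\xi_0$, hence identical for both branches, as in the stated result), and the $2^N$ from $g_N(0)$ is not cancelled but absorbed as the constant term of $\theta_0 Q_N^1+\theta_{2N-1}Q_N^N$ — your final formula for $\Phi_1(\xi_0)$ is nonetheless the correct one.
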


\begin{proof}
Similar to the proof of Theorem \ref{thm:resonancesexpan1}, it suffices to consider the case \(\delta \to 0\) and derive the asymptotics of the wavenumber \(k(\nu)\) in terms of \(\nu = \tfrac{2\delta/r}{1+\delta/r}\).

First, we apply the Newton polygon method reviewed in Appendix \ref{app: Newton Polygon Method} to determine the leading-order asymptotics of \(k(\nu)\). Specifically, by \eqref{equ:G_expession}, we expand \(g(k; \nu)\) around \(k = 0\) as:
\begin{equation*}
g(k;\nu)=\sum_{l=1}^{2N}g_l(k)\nu^l=\sum_{l=1}^{2N}\sum_{j=2N-1-2l}^\infty c_{jl}k^j\nu^{l}. 
\end{equation*}
Here, \(c_{jl}\) can be determined by \(g_l(k)\) in Proposition \ref{prop:ex_g}. The associated Newton polygon can be plotted in the \(\mathbb{R}^2\) plane, where the lower boundary of the convex hull comprises two connected piecewise linear segments, \(l_1\) and \(l_2\), with slopes \(-1\) and \(-\tfrac{1}{2}\), respectively, as shown in Fig. \ref{fig: lower boundary}.

    \begin{figure}[htbp!]
    \centering
    \begin{tikzpicture}[scale=0.8]
        \def\N{4}
        
        \pgfmathsetmacro{\xmax}{2*\N-1}
        \pgfmathsetmacro{\yA}{\N}           
        \pgfmathsetmacro{\yB}{\N-1}         
        \pgfmathsetmacro{\yC}{0}            
        
        \draw[->, thick] (0,0) -- (0,\N+1) node[left] {degree of $\nu$};
        \draw[->, thick] (0,0) -- (\xmax+4,0) node[below] {degree of $k$};
        
        \node at (0,0) [below left] {$O$};
        
        \draw[ultra thick, blue] 
            (0,\yA) -- 
            node[pos=0.7, above right] {$l_1$} 
            (1,\yB);
        
        \draw[ultra thick, blue] 
            (1,\yB) -- 
            node[pos=0.5, above right] {$l_2$} 
            (\xmax,\yC);

        
        \fill[red] (0,\yA) circle (2pt);
        \fill[red] (1,\yB) circle (2pt);
        \fill[red] (\xmax,\yC) circle (2pt);
        
        \draw[dashed, gray] (1,\yB) -- (1,0);  
        \draw[dashed, gray] (1,\yB) -- (0,\yB); 
        
        \draw (0.1,\yA) -- (-0.1,\yA) node[left] {$N$};
        \draw (0.1,\yB) -- (-0.1,\yB) node[left] {$N-1$};
        
        \draw (1,0.1) -- (1,-0.1) node[below] {$1$};
        \draw (\xmax,0.1) -- (\xmax,-0.1) node[below] {$2N-1$};
        
        \fill[black] (1,0) circle (1.5pt);  
        \fill[black] (0,\yB) circle (1.5pt); 
    \end{tikzpicture}
    
    \caption{The lower boundary of the convex hull}
    \label{fig: lower boundary}
    \end{figure}
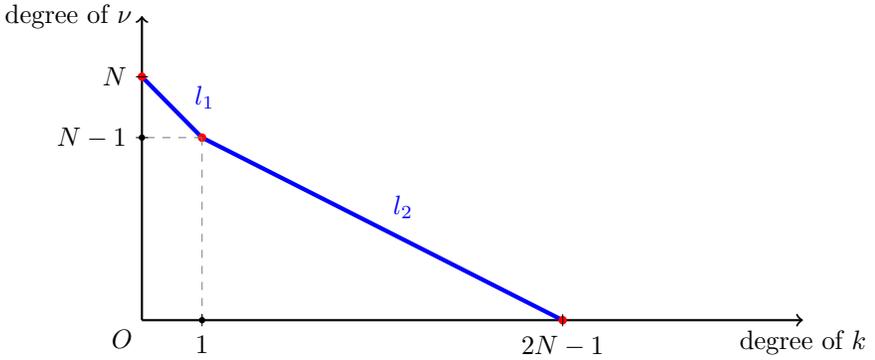

 The sets of points on the edges $l_1$ and $l_2$ are defined as follows:  
 \begin{align*}
    & E_1 := l_1 \cap \mathbb{Z}^2 = \{(0, N), (1, N-1)\}, \\
    & E_2 := l_2 \cap \mathbb{Z}^2 = \{(1, N-1), (3, N-2), (5, N-3), \dots, (2N-1, 0)\}.
 \end{align*}
 For the first edge \(l_1\), by \eqref{eq:slope}, the asymptotic form is \(k \sim c\nu\) with \(c \neq 0\), and the balance equation is  
 \begin{equation*}
     \sum_{(j, l) \in E_1} c_{jl} c^j = 0,
 \end{equation*}
where, using \eqref{equ:g_ex}, we have  
\begin{equation*}
    c_{0N} = 2^N, \quad c_{1, N-1} = -2^N \i \sum_{j=1}^N t_{2j-1}.
\end{equation*}
  Recall that \(l_j = t_{2j-1}\) for \(j = 1, \dots, N\). Solving the balance equation gives \(c^{-1} = \i \sum_{j=1}^N t_{2j-1}\), leading to the asymptotic form of the zero:  
\begin{equation*}
    k_1(\nu) \sim -\i \frac{\nu}{\sum_{j=1}^N t_{2j-1}}, \quad \nu \to 0.
\end{equation*}
Similarly, for the second edge \(l_2\), the asymptotic form is \(k \sim c\sqrt{\nu}\) with \(c \neq 0\), and the balance equation is given by  
\begin{align}\label{equ: balance equation E2}  
\sum_{(j, l) \in E_2} c_{jl} c^j = 0,  
\end{align}  
where, according to \eqref{equ:g_ex},  
\begin{align*}
    c_{2(N-l)-1, l} =  2^l (-2\i)^{2(N-l)-1} \prod_{j=1}^{2N-1} t_j\sum_{\substack{1 \leq j_1 \prec j_2 \prec\\ \cdots \prec j_l \leq 2N-1}} \prod_{m=1}^l \theta_{j_m}, \quad 0 \leq l \leq N-1.
\end{align*}
Using the the characteristic polynomial \(P_N(z)\) from Lemma \ref{lem:capamatrixspectrum1}, we have  
  \begin{equation*}
      0 = \sum_{(j, l) \in E_2} c_{jl} c^j = \i 2^{N-1} \bigl(\prod_{j=1}^{2N-1} t_j\bigr)\frac{P_N(2c^2)}{c}. 
  \end{equation*}
It follows that the solutions to the equation \eqref{equ: balance equation E2} can be characterized by the nonzero eigenvalues $\lad_j$, $ 2 \leq j \leq N$, of \(\mathcal{C}\):  
\begin{equation*}
    c_i^\pm = \pm \sqrt{\frac{\lambda_i}{2}}, \quad 2 \leq i \leq N.
\end{equation*}
    Consequently, the other $2N-2$ zeros have the asymptotic form
    \[
    k_i^\pm(\nu)\sim\pm\sqrt{\frac{1}{2}\lambda_i\nu},\quad\nu\to0,\quad2\leq i\leq N.
    \]
    
Next, we compute the second-order correction terms. Define 
    \[
    h_i^{\pm}(\alpha;\nu^{\frac{1}{2}}):=\nu^{-N+\frac{1}{2}}g\bigl(\nu^{\frac{1}{2}}\bigl(\pm\sqrt{\frac{1}{2}\lambda_i}+\alpha\bigr),\nu\bigr),\quad 2\leq i \leq N,
    \]
    with $g$ defined in (\ref{equ:Non_Matrix rep3}). A direct computation gives 
    \begin{align} 
       h_i^{\pm}&(\alpha;\nu^{\frac{1}{2}})\label{eq:h_alpha} \\
        = & \nu^{\frac{1}{2}}\sum_{l=1}^{N}c_{2N-2l,l}\bigl(\pm\sqrt{\frac{1}{2}\lambda_i}\bigr)^{2N-2l} \notag  + \alpha\sum_{l=0}^{N-1}(2N-2l-1)c_{2N-2l-1,l}\bigl(\pm\sqrt{\frac{1}{2}\lambda_i}\bigr)^{2N-2l-2} \\&+\sum_{\substack{j,l\in \Z\\2j+l\ge 2}}d_{jl}\nu^{j}\alpha^{l} \notag \\
        =&\bigl(\prod_{j=1}^{2N-1} t_j\bigr)\left[\nu^{\frac{1}{2}}2^{N-1}\bigl(\theta_0Q_N^1(\lambda_i)\!+\!\theta_{2N-1}Q_N^N(\lambda_i)\bigr)\!-\!\alpha \i 2^{N+1}P_N^{'}(\lambda_i)\right]\!+\!\sum_{\substack{j,l\in \Z\\2j+l\ge 2}}d^{\pm}_{jl}\nu^{j}\alpha^{l}\notag. 
    \end{align}
   Notice that \(h_i^\pm(\alpha; \nu^{\frac{1}{2}})\) is  analytic in \(\alpha\) and \(\nu^{\frac{1}{2}}\), with $h_i^\pm(0; 0) = 0$, and satisfies 
\begin{align*}
  \frac{\partial}{\partial \alpha} h_i^\pm(0; 0) &= - 2^{N+1}\i \bigl(\prod_{j=1}^{2N-1} t_j\bigr)P_N'(\lambda_i),\\
  \frac{\p }{\p \nu^{\frac{1}{2}}}h_i^\pm(0; 0)&=2^{N-1}\bigl(\prod_{j=1}^{2N-1} t_j\bigr)\bigl(\theta_0Q_N^1(\lambda_i)+\theta_{2N-1}Q_N^N(\lambda_i)\bigr).  
\end{align*}
Since the eigenvalue \(\lambda_i\) has multiplicity one for $\mathcal{C}$, it follows that \(P_N'(\lambda_i) \neq 0\). By the implicit function theorem, there exists a unique analytic function \(\alpha_i^\pm(\nu^{\frac{1}{2}})\) such that  
\[
h_i^\pm\left(\alpha_i^\pm(\nu^{\frac{1}{2}}); \nu^{\frac{1}{2}}\right) = 0, \quad 2 \leq i \leq N,
\]  
with \(\alpha_i^\pm(0) = 0\), for \(\nu\) sufficiently close to zero.
Moreover, using the implicit function
theorem method for the expansion \eqref{eq:h_alpha}, and combining Lemma \ref{lem:capamatrixspectrum1}, we have
    \[
    \alpha_i^{\pm}(\nu^{\frac{1}{2}})=-\nu^{\frac{1}{2}}\frac{\i}{4}\frac{a_{i1}^2+a_{iN}^2}{\sum_{j=1}^{N}a_{ij}^2t_{2j-1}}+O(\nu),\quad 2\leq i \leq n.
    \]
 Thus, \(k_i^\pm(\nu) = \nu^{\tfrac{1}{2}} \left(\pm \sqrt{\frac{1}{2} \lambda_i} + \alpha_i^\pm(\nu^{\frac{1}{2}})\right), \; 2 \leq i \leq N,\) are the \(2N-2\) zeros of \(g(k; \nu)\), which are analytic functions of \(\nu^{\frac{1}{2}}\). Their leading-order asymptotic expansions are given by  
 \begin{equation*}
     k_i^\pm(\nu) = \pm \sqrt{\frac{\lambda_i \nu}{2}} - \i \frac{\nu}{4} \frac{a_{i1}^2 + a_{iN}^2}{\sum_{j=1}^N a_{ij}^2 t_{2j-1}} + O(\nu^{\frac{3}{2}}), \quad i = 2, 3, \dots, N.
 \end{equation*}
 Finally, for \(k_1(\nu)\), define  
  \begin{equation*}
      h_1(\alpha, \nu) = \nu^{-N} g\left(\nu \left(\i \frac{\nu}{\sum_{j=1}^N t_{2j-1}} + \alpha\right); \nu\right).
  \end{equation*}
We find that \(h_1(0, 0) = 0\) and \(\tfrac{\partial}{\partial \alpha} h_1(0, 0) = c_{1, N-1} \neq 0\). By the implicit function theorem, it follows that \(k_1(\nu)\) is an analytic function of \(\nu\).
\end{proof}

\subsection{Eigenmodes of the scattering problem}

In this subsection, we characterize the eigenmodes of \eqref{equ: scattering problem} corresponding to the $2N-1$ nontrivial subwavelength resonant frequencies. The following Theorem \ref{thm:subwaveeigenmodes1} treats the case $\d\to0$; the result for $\d\to \infty$ follows from Theorem \ref{thm: resonant frequency property} (see Corollary \ref{coro:infite}). 


\begin{theorem} \label{thm:subwaveeigenmodes1}
    Suppose that $\ww(\d) =k(\d)v=O(\d^{1/2})$ is a subwavelength resonant frequency for the scattering problem \eqref{equ: scattering problem} in case $\d\to0$ and analytic about $\d^{1/2}$, $u(x)$ is the corresponding non-trivial solution with $u(x)=\e^{-\i\frac{\ww}{v}x}$ for $x<x_1^{-}$, then
    \begin{equation}\label{equ: u(x) estimate}
    \begin{split}
        &u(x)=\begin{cases}
            a_j+O(\d^{1/2}) &x\in (x_j^{-}, x_j^{+}),\qquad j=1,2,\cdots ,N,\\
            a_j+b_j(x-x_j^{+})+O(\d^{1/2}) &x\in(x_j^{+},x_{j+1}^{-})\qquad j=0,1,2,\cdots ,N,
        \end{cases}\\
        &u'(x)=\begin{cases}
            O(\d) &x\in (x_j^{-}, x_j^{+}),\qquad j=1,2,\cdots N,\\
            b_j+O(\d^{1/2}) &x\in(x_j^{+},x_{j+1}^{-})\qquad j=0,1,2,\cdots ,N,
        \end{cases}
    \end{split}
    \end{equation}
    where $\bm a=(a_1,a_2,\cdots,a_N)^{\top}$ is the corresponding eigenvector of the capacitance matrix $\mathcal{C}$ and $b_j=\tfrac{a_{j+1}-a_j}{s_j},j=1,2,\cdots,N-1$. Here, $x_0^{+}:= - M$ and $x_{N+1}^{-}:= M$ for a large enough $M$. 
\end{theorem}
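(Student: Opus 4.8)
The plan is to propagate the state vector $(u,u')^{\top}$ along the chain from $x_1^{-}$ to $x_N^{+}$, applying the transfer formula \eqref{eq:soluprop} on each interval and the jump relations \eqref{eq:jump} at each endpoint, and then to read off the limiting profile from the smallness $k=k(\d)=O(\d^{1/2})$. For $1\le j\le N$ I would track four scalars: the values $P_j^{-}:=u|_{\pm}(x_j^{-})$ and $P_j:=u|_{\pm}(x_j^{+})$ of $u$ at the two ends of $D_j$ (well defined by continuity), and the rescaled slopes $Q_j^{-}:=u'|_{-}(x_j^{-})=\d^{-1}u'|_{+}(x_j^{-})$ just left of $D_j$ and $Q_j:=u'|_{+}(x_j^{+})=\d^{-1}u'|_{-}(x_j^{+})$ just right of $D_j$. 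The normalization $u(x)=\e^{-\i k(x-x_1^{-})}$ on $(-\infty,x_1^{-})$ gives $P_1^{-}=1$ and $Q_1^{-}=-\i k=O(\d^{1/2})$, while the radiation condition forces $u=c_2\e^{\i k(x-x_N^{+})}$ on $(x_N^{+},+\infty)$, hence $P_N=c_2$ and $Q_N=\i k\,P_N=O(\d^{1/2})$. Since $k$ is analytic in $\d^{1/2}$, the ratio $rk^2/\d$ has a finite limit $\mu:=\lim_{\d\to0}rk^2/\d$ with $rk^2/\d=\mu+O(\d^{1/2})$.

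Next I would Taylor-expand $\cos$ and $rk\sin$ in \eqref{eq:soluprop}, using $k^2/\d=O(1)$ (so also $k^4/\d=O(\d)$), to obtain one-step relations modulo $O(\d^{1/2})$. Crossing $D_j$ gives $P_j=P_j^{-}+O(\d)$ and --- because the factor $\d^{-1}$ in \eqref{eq:jump} turns the $O(\d)$ derivative $u'|_{-}(x_j^{+})\approx-r^2k^2\ell_j\,P_j^{-}$ generated inside $D_j$ into an $O(1)$ quantity --- $Q_j=Q_j^{-}-r\ell_j\,(rk^2/\d)\,P_j^{-}+O(\d^{1/2})$; crossing the spacing $s_j$ gives $P_{j+1}^{-}=P_j+s_jQ_j+O(\d)$ and $Q_{j+1}^{-}=Q_j+O(\d)$; and pointwise one has $u=P_j+O(\d)$, $u'=O(\d)$ on $D_j$, and $u=P_j+(x-x_j^{+})Q_j+O(\d)$, $u'=Q_j+O(\d)$ on $(x_j^{+},x_{j+1}^{-})$. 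Because the chain has a fixed, finite number of intervals and $rk^2/\d=\mu+O(\d^{1/2})$, unrolling these relations yields $P_j=a_j+O(\d^{1/2})$ and $Q_j=b_j+O(\d^{1/2})$ for limiting values $a_j:=\lim P_j$ and $b_j:=\lim Q_j$, with $a_1=1$, $b_0:=\lim Q_1^{-}=0$ and $b_N:=\lim Q_N=0$ from the end conditions.

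The limiting recursion reads $a_{j+1}=a_j+s_jb_j$ (equivalently $b_j=(a_{j+1}-a_j)/s_j$ for $1\le j\le N-1$) together with $b_j=b_{j-1}-r\ell_j\mu\,a_j$ for $1\le j\le N$. Eliminating the $b_j$'s yields $\mu a_j=\tfrac{1}{r\ell_j}\bigl(\tfrac{a_j-a_{j-1}}{s_{j-1}}-\tfrac{a_{j+1}-a_j}{s_j}\bigr)$ for $2\le j\le N-1$, while $b_0=b_N=0$ give the one-sided identities $\mu a_1=\tfrac{a_1-a_2}{r\ell_1s_1}$ and $\mu a_N=\tfrac{a_N-a_{N-1}}{r\ell_Ns_{N-1}}$. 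Inserting $t_{2j-1}=r\ell_j$, $t_{2j}=s_j$ and $\theta_j=(t_jt_{j+1})^{-1}$ from \eqref{def:vectort}--\eqref{equ:defineofx}, these $N$ equations are exactly the rows of $\mathcal{C}\bm a=\mu\bm a$ for the capacitance matrix $\mathcal{C}$ in \eqref{equ: mathcal C def}. Since $a_1=1\ne0$, $\bm a$ is a genuine eigenvector, and as $\mathcal{C}$ has simple spectrum (Lemma \ref{lem:capamatrixspectrum1}(1)) it is the unique eigenvector normalized by $a_1=1$ --- the eigenvector associated with $\ww(\d)$. Substituting $P_j=a_j+O(\d^{1/2})$, $Q_j=b_j+O(\d^{1/2})$ into the pointwise estimates above gives \eqref{equ: u(x) estimate} on each $D_j$ and each interior spacing, while the two half-line regions $(x_0^{+},x_1^{-})$ and $(x_N^{+},x_{N+1}^{-})$ follow directly from the explicit exponentials with $b_0=b_N=0$; the case $\d\to\infty$ then reduces to $\d\to0$ by Theorem \ref{thm: resonant frequency property}, as recorded in Corollary \ref{coro:infite}.

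I expect the main obstacle to be the error accounting at the transmission endpoints: the factor $\d^{-1}$ in \eqref{eq:jump} amplifies the $O(\d)$ derivative built up inside a resonator into an $O(1)$ slope in the adjacent spacing, so one must expand $\cos(rk\ell_j)$ and $rk\sin(rk\ell_j)$ to sufficient order --- exploiting $k^4/\d=O(\d)$ --- to pin down $Q_j$ to accuracy $O(\d^{1/2})$, and then verify that these $O(\d^{1/2})$ errors are not inflated across the remaining finitely many transfer matrices, whose relevant entries are $O(1)$. A lesser bookkeeping issue is matching the one-sided identities at $j=1$ and $j=N$ with the first and last rows of $\mathcal{C}$, which relies on the conventions $t_0=t_{2N}=1$ behind \eqref{equ:defineofx}.
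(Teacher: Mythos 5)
Your proposal is correct and follows essentially the same route as the paper's proof: a forward propagation of $(u,u')^{\top}$ through each resonator and spacing using \eqref{eq:soluprop} and the jump relations \eqref{eq:jump}, with the $\delta^{-1}$ amplification of the $O(\delta)$ interior slope producing the $O(1)$ slopes $b_j$, followed by identification of the limiting recursion with the eigenvalue problem $\mathcal{C}\bm a=\mu\bm a$ where $\mu=\lim r k^2/\delta$. The only cosmetic difference is at the right endpoint, where you invoke the radiation condition $Q_N=\i k P_N=O(\delta^{1/2})$ directly to get $b_N=0$, whereas the paper appends a fictitious spacing $s_N$ after the last resonator — these are equivalent.
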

\begin{proof}
The proof proceeds in two steps. In Step 1, we derive the expressions for $u(x)$ and $u'(x)$ in \eqref{equ: u(x) estimate} by mathematical induction. In Step 2, we determine the constants $a_j$ and $b_j$.

\textbf{Step 1.}  For $j=0$, on interval $(x_0^+,x_1^-)$, $u(x)=\e^{-\i\frac{\ww}{v}x}=1+O(\delta^{1/2}),u'(x)=O(\d^{1/2})$, so (\ref{equ: u(x) estimate}) holds for the case when $j=0$. Now, suppose that for $x\in(x_j^{+},x_{j+1}^{-})$,
    \[
    \begin{cases}
        u(x)=a_j+b_j(x-x_j^{+})+O(\d^{1/2}), \\
        u'(x)=b_j+O(\d^{1/2}). 
    \end{cases}
    \]
It follows that
    \[
    \begin{cases}
        u(x_{j+1}^{-})=a_j+b_j(x_{j+1}^{-}-x_j^{+})+O(\d^{1/2}):=a_{j+1}+O(\d^{1/2}),\\
        \left.\frac{\dd u}{\dd x}\right\vert_{+}(x_{j+1}^{-})=\d\left.\frac{\dd u}{\dd x}\right\vert_{-}(x_{j+1}^{-})=O(\d).
    \end{cases}
    \]
    Furthermore, from the propagation matrix, we have 
    \begin{align*}
        u(x)&=\cos\left(kr\left(x-x_{j+1}^{-}\right)\right)u|_+(x_{j+1}^{-})+\frac{1}{kr}\sin\left(kr\left(x-x_{j+1}^{-}\right)\right)u'|_+(x_{j+1}^{-})\\
        &=(1+O(\d))u|_+(x_{j+1}^-)+[(x-x_{j+1}^{-})+O(\d)]u'|_+(x_{j+1}^{-})\\
        &=a_{j+1}+O(\d^{1/2}),\\
        u'(x)&=-kr\sin\left(kr\left(x-x_{j+1}^{-}\right)\right)u|_+(x_{j+1}^{-})+\cos\left(kr\left(x-x_{j+1}^{-}\right)\right)u'|_+(x_{j+1}^{-})\\
        &=O(\d)u|_+(x_{j+1}^{-})+(1+O(\d))u'|_+(x_{j+1}^{-})\\
        &=O(\d),
    \end{align*}
    for $x\in(x_{j+1}^-,x_{j+1}^+)$. At the end point $x_{j+1}$, the jump relation gives 
    \[
        \begin{cases}
        u|_+(x_{j+1}^{+})=a_{j+1}+O(\d^{1/2}),\\
        \left.\frac{\dd u}{\dd x}\right\vert_{+}(x_{j+1}^{+})=\frac{1}{\d}\left.\frac{\dd u}{\dd x}\right\vert_{-}(x_{j+1}^{+})=O(1)=b_{j+1}+O(\d^{1/2}).
    \end{cases}
    \]
Then, for $x\in(x_{j+1}^+,x_{j+2}^-)$, it holds that 
    \begin{align*}
        u(x)&=\cos\left(k\left(x-x_{j+1}^{+}\right)\right)u|_+(x_{j+1}^{+})+\frac{1}{k}\sin\left(k\left(x-x_{j+1}^{+}\right)\right)u'|_+(x_{j+1}^{+})\\
        &=(1+O(\d))u|_+(x_{j+1}^+)+[(x-x_{j+1}^{+})+O(\d)]u'|_+(x_{j+1}^{+})\\
        &=a_j+b_{j+1}(x-x_{j+1}^{+})+O(\d^{1/2}),
          \end{align*}
and 
    \begin{align*}
        u'(x)&=-kr\sin\left(kr\left(x-x_{j+1}^{+}\right)\right)u|_+(x_{j+1}^{+})+\cos\left(kr\left(x-x_{j+1}^{+}\right)\right)u'|_+(x_{j+1}^{+})\\
        &=O(\d)u|_+(x_{j+1}^{+})+(1+O(\d))u'|_+(x_{j+1}^{+})\\
        &=b_{j+1}+O(\d^{1/2}).
    \end{align*}
This proves \eqref{equ: u(x) estimate} by mathematical induction.

\textbf{Step 2.} Using the propagation matrix, we have
    \begin{align*}
        \begin{pmatrix}
        u(x_{j+1}^-)\\u'(x_{j+1}^-)
    \end{pmatrix}&=
    \begin{pmatrix}
        \cos(ks_j) & \frac{1}{k}\sin(ks_j)\\
        -k\sin(ks_j) & \cos(ks_j)
    \end{pmatrix}
    \begin{pmatrix}
        \cos(krl_j) & \frac{\d}{kr}\sin(krl_j)\\
        -\frac{kr}{\d}\sin(krl_j) & \cos(krl_j)
    \end{pmatrix}\begin{pmatrix}
        u(x_{j}^-)\\u'(x_{j}^-)
    \end{pmatrix}\\
    &=\left[\begin{pmatrix}
        1-rl_js_j\lambda & s_j\\
        -rl_j\lambda & 1
\end{pmatrix}+O(\d^{1/2})\right]\begin{pmatrix}
        u(x_{j}^-)\\u'(x_{j}^-)
    \end{pmatrix},\quad j=1,2,\cdots,N-1,
    \end{align*}
    where $u'(x)$ is understand as $\left.\frac{\dd u}{\dd x}\right\vert_{-}$. By the formula (\ref{equ: u(x) estimate}), we also have
    \[
    \begin{pmatrix}
        u(x_{j}^-)\\u'(x_{j}^-)
    \end{pmatrix}=\begin{pmatrix}
        a_{j}\\ \frac{a_{j}-a_{j-1}}{s_{j-1}}
    \end{pmatrix}+O(\d^{1/2}),\quad j=2,\cdots,N,\qquad \begin{pmatrix}
        u(x_{1}^-)\\u'(x_{1}^-)
    \end{pmatrix}=\begin{pmatrix}
        a_{1}\\ 0
    \end{pmatrix}+O(\d^{1/2}).
    \]
    Combining the two relations, we can obtain that 
    \begin{align*}
    &\frac{a_{j+1}}{rl_js_j}+\frac{a_{j-1}}{rl_js_{j-1}}=\left(\frac{1}{rl_js_{j-1}}+\frac{1}{rl_js_j}-\lambda\right)a_j,\quad j=2,3,\cdots,N-1,\\
    &\frac{1}{rl_1s_1}a_2=\left(\frac{1}{rl_1s_1}-\lambda\right)a_1.
    \end{align*}
If we imagine adding an interval of length $s_N$ after the last resonator, we can use the propagation matrix to obtain
\begin{equation*}
     \begin{pmatrix}
        a_N\\0
    \end{pmatrix}=\begin{pmatrix}
        1-rl_Ns_N\lambda&s_N\\
        -rl_N\lambda&1
    \end{pmatrix}\begin{pmatrix}
        a_N\\ \frac{a_N-a_{N-1}}{s_{N-1}}
    \end{pmatrix}.
\end{equation*}
This yields
\begin{align*}
     \frac{1}{rl_{N}s_{N-1}}a_{N-1}=\left(\frac{1}{rl_{N}s_{N-1}}-\lambda\right)a_N. 
\end{align*}
    Combining these equations, we obtain $(\mathcal{C}-\lambda I)\bm a=\bm 0$, which completes the proof.
\end{proof}

In the case $\d\to\infty$, a similar characterization follows from Theorem \ref{thm: resonant frequency property}.

\begin{corollary} \label{coro:infite}
    Under the assumption of Theorem \ref{thm:subwaveeigenmodes1}, we choose $t = \delta_0^{1/2}$ in Theorem \ref{thm: resonant frequency property}(3), such that for $x < x_1^- $, $v(x) = \delta_0^{1/2}u(x)$ is the corresponding eigenmode for $\delta = r^2/\delta_0$. Then, as $\delta_0 \to 0$ (namely, $\delta \to \infty$), we have
    \begin{equation}\label{equ: v(x) estimate}
    \begin{split}
        &v(x)=\begin{cases}
            c_j+O(\d^{-1/2}) &x\in(x_{j-1}^{+},x_{j}^{-})\qquad j=1,2,\cdots ,N+1,\\
            c_j+a_j(x-x_j^{+})+O(\d^{-1/2}) &x\in (x_j^{-}, x_j^{+}),\qquad j=1,2,\cdots ,N.\\
        \end{cases}
    \end{split}
    \end{equation}
Here $\bm a=(a_1,a_2,\ldots,a_N)^{\top}$ is the corresponding eigenvector of the capacitance matrix $\mathcal{C}$, and $c_1=0$, $c_j=c_{j-1}+a_{j-1}\ell_{j-1}$ for $j=2,\ldots,N$.
\end{corollary}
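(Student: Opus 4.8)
The plan is to \emph{transport} the asymptotic profile of $u$ --- already pinned down in Theorem~\ref{thm:subwaveeigenmodes1} --- to the dual mode $v$ by inverting the pointwise identities of Theorem~\ref{thm: resonant frequency property}(3). First I would specialize that theorem to $\delta=\delta_0$, $t=\delta_0^{1/2}$ and use $k_b=rk$ to rewrite \eqref{equ:eigenmodesrelation1} in ``solved'' form,
\begin{align*}
v(x)=\frac{\i}{\delta_0^{1/2}k}\,u'(x),\quad v'(x)=-\i\,\frac{r^{2}k}{\delta_0^{1/2}}\,u(x), &\quad x\in D,\\
v(x)=\frac{\i\,\delta_0^{1/2}}{k}\,u'(x),\quad v'(x)=-\i\,\delta_0^{1/2}k\,u(x), &\quad x\in\R\setminus D,
\end{align*}
which express $v,v'$ algebraically in terms of $u,u'$. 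Since $\omega(\delta_0)=kv$ is a nontrivial subwavelength resonance analytic in $\delta_0^{1/2}$, Theorem~\ref{thm: subwavelength_resonant_frequencies} gives $k=c_\star\delta_0^{1/2}+O(\delta_0)$ with $c_\star=\pm\sqrt{\lambda_i/r}\neq0$ for the $2N-2$ modes attached to a nonzero eigenvalue $\lambda_i$ of $\mathcal C$; hence $\delta_0^{1/2}/k$ and $k/\delta_0^{1/2}$ remain $O(1)$ with limits, and $\delta_0^{1/2}=r\delta^{-1/2}=O(\delta^{-1/2})$, which converts every error term to $O(\delta^{-1/2})$.

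Next I would substitute the expansions \eqref{equ: u(x) estimate}. On a spacing $(x_{j-1}^{+},x_j^{-})$, $u=O(1)$ and $u'=b_{j-1}+O(\delta_0^{1/2})$, so $v(x)=\tfrac{\i\delta_0^{1/2}}{k}b_{j-1}+O(\delta^{-1/2})$ is a constant $c_j$ up to $O(\delta^{-1/2})$, while $v'(x)=-\i\delta_0^{1/2}k\,u(x)=O(\delta^{-1/2})$ confirms it --- this is the first line of \eqref{equ: v(x) estimate}. On a resonator $(x_j^{-},x_j^{+})$, $u=a_j+O(\delta_0^{1/2})$ (with $\bm a$ the eigenvector of $\mathcal C$ furnished by Theorem~\ref{thm:subwaveeigenmodes1}) and $u'=O(\delta_0)$, so $v'(x)=-\i\tfrac{r^{2}k}{\delta_0^{1/2}}a_j+O(\delta^{-1/2})$ is a constant slope tending to $(-\i r^{2}c_\star)a_j$, while $v(x)=\tfrac{\i}{\delta_0^{1/2}k}u'(x)=O(1)$; integrating the slope from the value $c_j$ at $x_j^{-}$ yields the affine profile of \eqref{equ: v(x) estimate} on the resonator, after $\bm a$ is rescaled by the $j$-independent scalar $-\i r^{2}c_\star$ (still an eigenvector for $\lambda_i$).

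Finally I would fix the constants: the normalization gives $v(x)=\delta_0^{1/2}\e^{-\i\omega x/v}\to0$ on $(x_0^{+},x_1^{-})$, so $c_1=0$, and continuity of $v$ at each interface $x_j^{+}$ equates the constant value on the next spacing with the endpoint $c_j+a_j\ell_j$ of the near-linear profile on resonator $j$, which is exactly the recursion $c_{j+1}=c_j+a_j\ell_j$. Assembling these gives \eqref{equ: v(x) estimate}; the $\omega_1$-family (with $\lambda_1=0$, $k\asymp\delta_0$) is covered by the same substitution, now trivially, since there $v\to0$ on every interval. I expect the main obstacle to be order bookkeeping: the identities multiply $O(\delta_0)$-small data ($u'$ inside the resonators, and $u'$ just across the density jump at $x_N^{+}$) by the large factor $\delta_0^{-1/2}k^{-1}\asymp\delta_0^{-1}$, so one must verify that the leading $O(\delta_0)$-parts of $u,u'$ supplied by Theorem~\ref{thm:subwaveeigenmodes1} are exactly those that keep $v$ bounded; in particular, the radiation condition at $x_N^{+}$ together with the last row of $\mathcal C\bm a=\lambda_i\bm a$ produces the cancellation in $u'|_{-}(x_N^{+})=O(\delta_0^{3/2})$ needed for $v$ to remain $O(\delta^{-1/2})$ past the last resonator.
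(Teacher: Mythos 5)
Your proposal is correct and follows exactly the route the paper intends (the paper gives no written proof, only the remark that the corollary ``follows from Theorem \ref{thm: resonant frequency property}''): you invert the duality relations \eqref{equ:eigenmodesrelation1} with $t=\delta_0^{1/2}$, substitute the expansions \eqref{equ: u(x) estimate}, use $k=c_\star\delta_0^{1/2}+O(\delta_0)$ to control the factors $\delta_0^{\pm1/2}/k$, and you correctly flag the two genuine checkpoints — the rescaling of $\bm a$ by the constant $-\i r^2c_\star$ and the cancellation at $x_N^+$ coming from $\sum_j a_j\ell_j=0$ for $\lambda_i\neq0$. The only discrepancy is that your integration naturally produces the affine profile $c_j+a_j(x-x_j^-)$ on the $j$-th resonator, which is the version consistent with the recursion $c_j=c_{j-1}+a_{j-1}\ell_{j-1}$ and with continuity at $x_j^-$; the reference point $x_j^+$ in \eqref{equ: v(x) estimate} appears to be a typo in the statement rather than an error in your argument.
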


\subsection{Comparison with the three-dimensional case as \texorpdfstring{$\delta\to\infty$}{δ→∞}}\label{sec:threeddeltainfinity1}

In this subsection, we present a concise comparison between the subwavelength resonant frequencies in the one-dimensional and three-dimensional settings. To this end, we consider a three-dimensional analogue of the scattering problem~\eqref{equ: scattering problem}; see~\cite{ammari2018book}. In the three-dimensional case, assuming that $k^2$ is not a Dirichlet eigenvalue of $-\Delta$ on $D$, the scattering problem admits a resonant frequency if and only if the operator $\mathcal{A}(k,\delta)$ is not injective. Here, $\mathcal{A}(k,\delta): L^2(\partial D) \times L^2(\partial D) \to H^1(\partial D) \times L^2(\partial D)$ is defined by
\begin{align*}
\mathcal{A}(k,\delta) = \begin{pmatrix}
    \mathcal{S}_D^{rk} & -\mathcal{S}_D^k \\
    -\frac{1}{2}I + \mathcal{K}_D^{rk,*} & -\delta\left(\frac{1}{2}I + \mathcal{K}_D^{k,*}\right)
\end{pmatrix},
\end{align*}
where $\mathcal{S}^\ww_D$ is the single layer potential and $\mathcal{K}^{\ww,*}_D$ represents the Neumann-Poincar\'e operator. Introducing the parameter $\tau = \delta^{-1}$, we observe that $\mathcal{A}(k,\delta)$ is injective if and only if $\widetilde{\mathcal{A}}(k,\tau)$ is injective, where
\[
\widetilde{\mathcal{A}}(k,\tau) = \begin{pmatrix}
    \mathcal{S}_D^{rk} & -\mathcal{S}_D^k \\
    -\tau\left(\frac{1}{2}I - \mathcal{K}_D^{rk,*}\right) & -\left(\frac{1}{2}I + \mathcal{K}_D^{k,*}\right)
\end{pmatrix}.
\]
Consequently, the subwavelength resonance problem in the limit $\delta \to \infty$ reduces to finding $\ww(\tau)$ such that $\ww(\tau) \to 0$ as $\tau \to 0$ and $\widetilde{\mathcal{A}}(k, \tau)$ is not injective. It is straightforward to verify that $\widetilde{\mathcal{A}}(k,\tau)$ is continuous in the norm topology. Examining the operator $\widetilde{\mathcal{A}}(0,0)$, we find that
\[
\widetilde{\mathcal{A}}(0,0) = \begin{pmatrix}
    \mathcal{S}_D^0 & -\mathcal{S}_D^0 \\
    0 & -\left(\frac{1}{2}I + \mathcal{K}_D^{0,*}\right)
\end{pmatrix}
\]
is invertible, which follows from the invertibility of $\mathcal{S}_D^0: L^2(\partial D) \to H^1(\partial D)$ and the fact that the spectrum of $\mathcal{K}_D^{0,*}$ is contained in $(-1/2, 1/2]$; see again~\cite{ammari2018book}. Since the set of invertible operators forms an open subset in the space of bounded operators equipped with the norm topology, $\widetilde{\mathcal{A}}(k,\tau)$ remains invertible in a neighborhood of $(k,\tau) = (0,0)$. This implies that the three-dimensional scattering problem possesses no subwavelength resonant frequencies in the limit $\delta \to \infty$. This behavior stands in stark contrast to the one-dimensional case, where Theorem~\ref{thm: subwavelength_resonant_frequencies} establishes the existence of exactly $2N$ subwavelength resonant frequencies as $\delta \to \infty$.

\section{Non-reciprocal system}\label{sec:nonresys}

In this section, we generalize the propagation matrix method to wave propagation in systems of non-Hermitian high-contrast resonators. We consider the same chain of resonators as in Section \ref{sec:modelsethermitian}. By introducing an imaginary gauge potential $\gamma$ (see \cite{ammari2024mathematical, yokomizo2022non}), the governing equation for wave propagation becomes a generalized Sturm--Liouville equation: for $x \in \R$, 
\begin{align}
    -\frac{\omega^{2}}{\kappa(x)}u(x)- \gamma(x) \frac{\dd}{\dd x}u(x)-\frac{\dd}{\dd x}\left( \frac{1}{\rho(x)}\frac{\dd}{\dd
    x}  u(x)\right) =0, 
    \label{eq: gen Strum-Liouville}
\end{align}
with $\gamma(x) = \gamma_i$ for $x \in D_i$, and $0$ for $x\in \R \setminus D$. 
The other parameters are the same as in \eqref{equ: k(x) def} and \eqref{eq:contrast}. In this setting, the wave problem  \eqref{eq: gen Strum-Liouville} can be rewritten as the following system of coupled one-dimensional equations:
\begin{align}
    \begin{dcases}
    \frac{{\dd}^{2}}{\dd x^2}u(x) + \g \frac{\dd}{\dd x}u(x) +\frac{\omega^2}{v_b^2}u(x)=0, & x\in D_i,\\
    \frac{{\dd}^{2}}{\dd x^2}u(x)  +\frac{\omega^2}{v^2}u(x)=0, & x\in \R\setminus D,\\
        u\vert_{+}(x_i^{\pm}) = u\vert_{-}(x_i^{\pm}), & \text{for all } 1\leq i\leq N,\\
        \left.\frac{\dd u}{\dd x}\right\vert_{\pm}(x_i^{\mp})=\delta\left.\frac{\dd u}{\dd x}\right\vert_{\mp}(x_i^{\mp}), & \text{for all } 1\leq i\leq N,\\
        \frac{\dd u}{\dd\abs{x}}(x) -\i \frac{\omega}{v} u(x) = 0, & x\in(-\infty,x_1^{-}) \cup (x_{N}^{+},\infty).
    \end{dcases}
\label{eq:coupled ods}
\end{align}

\subsection{Propagation matrix}
We now formulate the propagation matrix for the system (\ref{eq:coupled ods}). Consider the second-order ODE:  
\begin{align} \label{simple}  
\frac{\dd^2}{\dd x^2}u(x) + \gamma \frac{\dd}{\dd x}u(x) + k^2 u(x) = 0, \quad x \in (0, a),  
\end{align}  
which can be reformulated as the first-order ODE system \(U'(x) = AU(x)\), where  
\[
A := \begin{pmatrix}  
0 & 1 \\  
-k^2 & -\gamma  
\end{pmatrix}, \quad  
U(x) := \begin{pmatrix}  
u(x) \\  
u'(x)  
\end{pmatrix}.  
\]
Its solution is given by, for $x \in [0,a]$, 
\begin{align}\label{equ:odesolution2}
    U(x)=\e^{x A}U(0).
\end{align}
The matrix \(A\) has two eigenvalues given by  
\[
\lambda_1 = \frac{-\gamma - \sqrt{\Delta}}{2}, \quad \lambda_2 = \frac{-\gamma + \sqrt{\Delta}}{2}, \quad \text{where } \Delta = \gamma^2 - 4k^2.
\]  
It can be diagonalized as  
\[
A = P \begin{pmatrix}  
\lambda_1 & 0 \\  
0 & \lambda_2  
\end{pmatrix} P^{-1}, \q P = \begin{pmatrix}  
1 & 1 \\  
\lambda_1 & \lambda_2  
\end{pmatrix}.
\]  
Therefore, we have the propagation matrix from $x = 0$ to $x = a$: 
\begin{align*}
    \e^{aA}=&P\begin{pmatrix}
        \e^{\lad_1a}&0\\0&\e^{\lad_2a}
    \end{pmatrix}P^{-1}=\frac{1}{\lad_2-\lad_1}\begin{pmatrix}
        \lad_2\e^{\lad_1a}-\lad_1\e^{\lad_2a}&\e^{\lad_2a}-\e^{\lad_1a}\\
        -\lad_1\lad_2(\e^{\lad_2a}-\e^{\lad_1a})&\lad_2\e^{\lad_2a}-\lad_1\e^{\lad_1a}
    \end{pmatrix}\\
    =&   \e^{-\frac{\g a}{2}}M(k) T(k;\g,a) M\left(\frac{1}{k}\right),
\end{align*}
where $M(\cdot)$ is given as in \eqref{eq:deftka}, and $T(\cdot)$ is defined by 
\begin{align*}
 T(k; \gamma, a) := \begin{pmatrix}  
\frac{\gamma}{\sqrt{\Delta}} \sinh\left(\frac{\sqrt{\Delta}a}{2}\right) + \cosh\left(\frac{\sqrt{\Delta}a}{2}\right) & \frac{2k}{\sqrt{\Delta}} \sinh\left(\frac{\sqrt{\Delta}a}{2}\right) \\  
-\frac{2k}{\sqrt{\Delta}} \sinh\left(\frac{\sqrt{\Delta}a}{2}\right) & -\frac{\gamma}{\sqrt{\Delta}} \sinh\left(\frac{\sqrt{\Delta}a}{2}\right) + \cosh\left(\frac{\sqrt{\Delta}a}{2}\right)  
\end{pmatrix}.
\end{align*}
Here, the factor \(\e^{-\frac{\gamma a}{2}}\) describes the attenuation of the wave during its propagation (see \cite{ammari2025competingedgebulklocalisation}).  Furthermore, similar to the derivation of (\ref{eq:Matrix rep2}), we find that \(\omega\) is a non-trivial resonant frequency of (\ref{eq:coupled ods}) if and only if \(k = \ww/v\) satisfies 
\begin{multline} \label{prob1non}
      \begin{pmatrix}
        t \\\i t
    \end{pmatrix}=\prod_{j=1}^N \e^{-\frac{\g_{j} a}{2}} M\left(\frac{r}{\d}\right)T(rk; \g_N,\ell_N)M\left(\frac{\d}{r}\right)T(k;0,s_{N-1})M\left(\frac{r}{\d}\right) \\ \cdots T(rk;\g_{1},\ell_{1})M\left(\frac{\d}{r}\right)\begin{pmatrix}
        s \\-\i s
    \end{pmatrix}
\end{multline}
for some $s, t\neq 0$. 
Note $T(kr;\gamma,l)=\tfrac{1}{r}T(k;\tfrac{\gamma}{r},rl)$, and let
\begin{align*}
    \bm t = (r\ell_1,s_{1},r\ell_{2}, \cdots ,s_{N-1}, r\ell_{N}) \in \R_{>0}^{2N-1}\,, \q \bm \beta=(\tfrac{\g_1}{r},0, \tfrac{\g_{2}}{r}, \cdots ,0,\tfrac{\g_N}{r}),\quad  \sigma = \tfrac{\delta}{r}. 
\end{align*}
Then the equation \eqref{prob1non} reduces to finding all \(k \in \C\) satisfying 
\begin{equation} \label{equ:Non_Matrix rep_1}  \begin{aligned}
    t \begin{pmatrix}  
        1 \\ \i  
    \end{pmatrix} = M\left(\frac{1}{\sigma}\right) T(k; \beta_{2N-1}, t_{2N-1}) M(\sigma) T(k; \beta_{2N-2}, t_{2N-2}) M\left(\frac{1}{\sigma}\right)\\ \cdots M\left(\frac{1}{\sigma}\right) T(k; \beta_1, t_1) M(\sigma) \begin{pmatrix}  
        1 \\ -\i  
    \end{pmatrix}.  
    \end{aligned}
\end{equation}

Let \(e_{+} = (1, \i)^T\) and \(e_{-} = (1, -\i)^T\). We observe that  
\begin{align*}  
    &T(k; \beta_i, t_i)e_{\pm} = \left(\pm \frac{2k\i}{\sqrt{\Delta_i}} \sinh\bigl(\frac{\sqrt{\Delta_i}t_i}{2}\bigr) + \cosh\bigl(\frac{\sqrt{\Delta_i}t_i}{2}\bigr)\right)e_{\pm} + \frac{\beta_i}{\sqrt{\Delta_i}} \sinh\bigl(\frac{\sqrt{\Delta_i}t_i}{2}\bigr)e_{\mp}; \\  
    &M(\sigma)e_{\pm} = \frac{1 + \sigma}{2}e_{\pm} + \frac{1 - \sigma}{2}e_{\mp}.  
\end{align*}
Therefore, under the basis \(\{e_{+}, e_{-}\}\), (\ref{equ:Non_Matrix rep_1}) can be rewritten as  
\begin{align} \label{equ:Non_Matrix rep_2}  
    t \begin{pmatrix}  
        1 \\ 0  
    \end{pmatrix} = R\left(\frac{1}{\sigma}\right)L_{2N-1}(k)R(\sigma)L_{2N-2}(k)R\left(\frac{1}{\sigma}\right)\cdots R\left(\frac{1}{\sigma}\right)L_1(k)R(\sigma)  
    \begin{pmatrix}  
        0 \\ 1  
    \end{pmatrix}, \quad t \neq 0,  
\end{align}  
where \(R(z)\) is defined as in (\ref{def:rlmatrix}), and  
\[
L_i(k) := \begin{pmatrix}  
\frac{2k\i}{\sqrt{\Delta_i}} \sinh\bigl(\frac{\sqrt{\Delta_i} t_i}{2}\bigr) + \cosh\bigl(\frac{\sqrt{\Delta_i} t_i}{2}\bigr) & \frac{\beta_i}{\sqrt{\Delta_i}} \sinh\bigl(\frac{\sqrt{\Delta_i} t_i}{2}\bigr) \\  
\frac{\beta_i}{\sqrt{\Delta_i}} \sinh\bigl(\frac{\sqrt{\Delta_i} t_i}{2}\bigr) & -\frac{2k\i}{\sqrt{\Delta_i}} \sinh\bigl(\frac{\sqrt{\Delta_i} t_i}{2}\bigr) + \cosh\bigl(\frac{\sqrt{\Delta_i} t_i}{2}\bigr)  
\end{pmatrix}.
\]

As in the previous sections, the equation \eqref{equ:Non_Matrix rep_2} (or its normalized version) forms the basis for deriving the resonances of problem \eqref{eq:coupled ods}. However, unlike the Hermitian case, the matrix \(L_i\) is not diagonal, which makes the characteristic function for resonant frequencies more intricate than \(f(k; \sigma)\) in Theorem \ref{thm: f(z;mu) zeros}. Consequently, we focus on subwavelength resonances and leave the analysis of the non-subwavelength regime for future work.

\subsection{Capacitance matrix theory} 
In this section, we develop the capacitance matrix theory for the subwavelength resonances of problem (\ref{eq:coupled ods}), building on the propagation matrix framework.

Similarly to \eqref{equ: M_total def}, define the modified total propagation matrix associated with \eqref{eq:coupled ods}: 
\[
M_{tot}(k; \sigma) := \frac{(4\sigma)^N}{(1+\sigma)^{2N}} R\left(\frac{1}{\sigma}\right)L_{2N-1}(k)R(\sigma)L_{2N-2}(k)\cdots R\left(\frac{1}{\sigma}\right)L_1(k)R(\sigma)\,,
\]  
where, using \eqref{def:matrixrs}, 
\begin{align*}
    \frac{4\sigma}{(1+\sigma)^2}P R\left(\frac{1}{\sigma}\right)L(k)R(\sigma)P = (R + \nu S)L(k)(R + \nu S), \quad \text{with} \quad P = \begin{pmatrix}  
    -1 & \\  
    & 1  
\end{pmatrix}.
\end{align*}
Thus, it follows that  
\[
P M_{tot}(k; \sigma) P = G(k; \nu),  
\]  
with 
\begin{equation} \label{equ:defGknunonhermitian1}  
G(k, \nu) := (R + \nu S)L_{2N-1}(k)(R + \nu S)L_{2N-2}(k)(R + \nu S)\cdots L_1(k)(R + \nu S).  
\end{equation}  
By (\ref{equ:Non_Matrix rep_2}), \(\omega = kv\) is a resonant frequency if and only if \(k\) is a zero of  
\[
g(k, \nu) := G(k, \nu)_{2,2} = g_0(k) + g_1(k)\nu + g_2(k)\nu^2 + \cdots + g_{2N}(k)\nu^{2N}.  
\]

Similarly to what we have done in Section \ref{sec:capacitancematrixtheroy1}, applying the Newton polygon method to the above expansion of $g(k, \nu)$, we can recover the capacitance matrix theory for the subwavelength resonances in \cite{ammari2024mathematical}. 
We define the generalized capacitance matrix as
\begin{align} \label{equ: Non_cap_matrix}
    \mathcal{C}:=\begin{pmatrix}
        \theta_1&-\theta_1\\
        -\theta_2&\theta_2+\theta_3&-\theta_3\\
        &-\theta_4&\theta_4+\theta_5&-\theta_5\\
        &&\ddots&\ddots&\ddots\\
        &&&-\theta_{2N-4}&\theta_{2N-4}+\theta_{2N-3}&-\theta_{2N-3}\\
        &&&&-\theta_{2N-2}&\theta_{2N-2}
    \end{pmatrix},
\end{align}
where 
\begin{align}
    \theta_j&:=\begin{cases}\label{equ:x_j}
        \frac{\beta_j \e^{\frac{\beta_j t_j}{2}}}{2\sinh(\frac{\beta_j t_j}{2})t_{j+1}}, &j=2i-1,\quad i=1,2,\cdots,N,\\
        \frac{\beta_{j+1} \e^{-\frac{\beta_{j+1}t_{j+1}}{2}}}{2\sinh(\frac{\beta_{j+1}t_{j+1}}{2})t_{j}}, &j=2i,\quad i=0, 1,2,\cdots,N-1,
    \end{cases}
\end{align}
where \(t_0=t_{2N}=1\).
\begin{remark}
    Noting that $\bm{t} = (r\ell_1, s_1, r\ell_2, \ldots, s_{N-1}, r\ell_N)$, and $\bm{\beta}$ is given by $\bm{\beta} = (\tfrac{\g_1}{r}, 0, \tfrac{\g_2}{r}, \ldots, 0, \tfrac{\g_N}{r})$. Substituting these into \eqref{equ: Non_cap_matrix} and \eqref{equ:x_j}, a direct computation yields the explicit form of $\mathcal{C}$, which is related to the gauge capacitance matrix $\mathcal{C}^{\gamma}$ in \cite[Corollary 2.6]{ammari2024mathematical} by $\mathcal{C} = V^{-1}\mathcal{C}^{\gamma}/r$.
\end{remark}

\begin{theorem} \label{thm:nonrecip}
    Let $\d\in\mathbb C$. For cases when $\d \to 0$ , the scattering problem \eqref{eq:coupled ods} has exactly $2N$ subwavelength resonant frequencies:
    \begin{itemize}
        \item[$\bullet$] a trivial frequency $\omega^*=0$.
        \item[$\bullet$] the first eigenfrequency $\omega_1(\d)$, which is an analytic function of $\d$ (or $\d^{-1}$) with leading asymptotic expansion:
        \begin{align*}
    &\ww_1(\d)=O(\d^2)+\\
    &\frac{-\i \delta v_b^2\left[\prod\limits_{j=1}^N\bigl(\frac{\g_{2j-1}}{2}\e^{\frac{\g_{2j-1} r\ell_{2j-1}}{2}}\bigr)+\prod\limits_{j=1}^N\bigl(\frac{\g_{2j-1}}{2}\e^{-\frac{\g_{2j-1}r\ell_{2j-1}}{2}}\bigr)\right]}
    {
    v\sum\limits_{i=1}^{N}\left[\sinh\bigl(\frac{\g_{2i-1}r\ell_{2j-1}}{2}\bigr)
    \prod\limits_{j=1}^{i-1}\bigl(\g_{2j-1}\e^{-\frac{\g_{2j-1}r\ell_{2j-1}}{2}}\bigr)
    \prod\limits_{j=i+1}^{N}\bigl(\g_{2j-1}\e^{\frac{\g_{2j-1}r\ell_{2j-1}}{2}}\bigr)\right]
    }. 
\end{align*}
        \item[$\bullet$] the remaining $2N-2$ frequencies are analytic functions of $\d^{1/2}$, with leading-order asymptotic expansions:
        \begin{align*}
        &\ww_i^{\pm}(\d)=\pm v\sqrt{\frac{\lad_i}{r}\d}-\i \d\frac{v_b}{2}\frac{\theta_{0}a_{i1}b_{i1}+\theta_{2N-1}a_{iN}b_{iN}}{\sum_{j=1}^{N}a_{ij}b_{ij}}+O(\d^{\frac{3}{2}}), \quad i=2,3, \dots, N, 
        \end{align*}
       where $0=\lambda_1<\lambda_2<\cdots<\lambda_N$ are the eigenvalues of $\mathcal{C}$, and $\bm{a}_i=(a_{ij})_{j=1}^N{}^\top$, $\bm{b}_i=(b_{ij})_{j=1}^N$ are the right and left eigenvectors corresponding to $\lambda_i$, respectively. 
    \end{itemize}
    What's more, the eigenmodes corresponding to the resonant frequencies $\omega_i=k_iv$ with $k_i=\sqrt{\lambda_i\d}+O(\d)$, $i=1,\ldots, N$, have the same form as \eqref{equ: u(x) estimate}, where $\bm a=(a_1,a_2,\ldots,a_N)^{\top}$ is the corresponding eigenvector of the capacitance matrix $\mathcal{C}$ and $b_j=\tfrac{a_{j+1}-a_j}{s_j}$ for $j=1,2,\ldots,N-1$. 
\end{theorem}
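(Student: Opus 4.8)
The plan is to reprise the $\delta\to 0$ argument in the proof of Theorem~\ref{thm: subwavelength_resonant_frequencies}, with the diagonal factors $L(t_jk)$ replaced by the non-reciprocal transfer matrices $L_j(k)$ of \eqref{equ:Non_Matrix rep_2} and \eqref{equ:defGknunonhermitian1}. As there, it suffices to set $\nu=\tfrac{2\delta/r}{1+\delta/r}=2\delta/r+O(\delta^2)$, expand $g(k,\nu)=G(k,\nu)_{2,2}=\sum_{l=0}^{2N}g_l(k)\nu^l$ around $k=0$, locate its small zeros via the Newton polygon method of Appendix~\ref{app: Newton Polygon Method}, and then pass back through $\omega=kv=rv_bk$. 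The single new ingredient is an analogue of Proposition~\ref{prop:ex_g}: a $k\to 0$ expansion of each $g_l(k)$ whose leading coefficients are expressed through the weights $\theta_j$ of \eqref{equ:x_j}.

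First I would record that on the spacer layers $\beta_{2i}=0$, so $L_{2i}(k)=\operatorname{diag}(\mathrm{e}^{\i t_{2i}k},\mathrm{e}^{-\i t_{2i}k})$ is exactly the Hermitian factor, whereas on the resonator layers $\sqrt{\Delta_{2i-1}}=\sqrt{\beta_{2i-1}^2-4k^2}=\beta_{2i-1}+O(k^2)$, so $L_{2i-1}(k)$ is a fixed non-diagonal matrix plus $O(k)$. The identities replacing \eqref{equ:matrixexpan1} are $RL_{2i}(k)R=-2\i\sin(t_{2i}k)R$ (unchanged) and
\[
RL_{2i-1}(k)R=-\tfrac{4\i k}{\sqrt{\Delta_{2i-1}}}\sinh\!\bigl(\tfrac{\sqrt{\Delta_{2i-1}}\,t_{2i-1}}{2}\bigr)R,
\]
together with the leading-order forms of $SL_{2N-1}(k)R$, $RL_1(k)S$ and $RL_{j+1}(k)SL_j(k)R$. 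Since every one of these factors is $O(k)$, a telescoping argument (each $RL_j(k)R$ is a scalar multiple of $R$) gives $g_0(k)=O(k^{2N-1})$, and more generally $g_l(k)=O(k^{2N-1-2l})$ for $0\le l\le N-1$, while $g_N(0)\ne 0$ (here one uses $\gamma$, hence $\beta$, real). Expanding the $2N$-fold product $G(k,\nu)$ and collecting the terms lying on the lower boundary of the Newton polygon, exactly as in Appendix~\ref{sect:proofexpangl}, shows that the coefficients $c_{2N-1-2l,\,l}$ are, up to one common nonzero factor, the elementary symmetric sums over the $\prec$-separated index sets of the $\theta_j$ of \eqref{equ:x_j}. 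Hence the lower boundary again consists of the edge $l_1$ of slope $-1$ with vertices $(0,N),(1,N-1)$ and the edge $l_2$ of slope $-\tfrac{1}{2}$ with vertices $(1,N-1),(3,N-2),\dots,(2N-1,0)$, as in Fig.~\ref{fig: lower boundary}.

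Granting this expansion, the remaining steps parallel the Hermitian proof. On $l_1$ the leading term is $k_1(\nu)\sim c\nu$ with $c$ the root of the balance equation $c_{0N}+c_{1,N-1}c=0$; reinserting the $\gamma$-dependent resonator factors that make up $c_{0N}$ and $c_{1,N-1}$ produces precisely the (lengthy) leading expression for $\omega_1(\delta)$ in the statement, and, since $c_{1,N-1}\ne 0$, the implicit function theorem applied to $\nu^{-N}g(\nu(c+\alpha),\nu)$ at $(\alpha,\nu)=(0,0)$ shows $k_1(\nu)$, hence $\omega_1(\delta)$, is analytic in $\delta$. On $l_2$ the leading term is $k\sim c\nu^{1/2}$, and the balance equation is, up to a nonzero factor, $P_N(2c^2)=0$, where $P_N$ is the characteristic polynomial of $\mathcal C$ in \eqref{equ: Non_cap_matrix}, identified through the combinatorial formula \eqref{equ: exp_P_N}. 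Here one needs the analogue of Lemma~\ref{lem:capamatrixspectrum1}, which still applies because $\mathcal C$ is tridiagonal with $\mathcal C_{i,i+1}\mathcal C_{i+1,i}=\theta_{2i-1}\theta_{2i}>0$ for real $\gamma$ — equivalently $\mathcal C$ is similar to a symmetric matrix, cf.\ the relation $\mathcal C=V^{-1}\mathcal C^{\gamma}/r$ — so $\mathcal C$ has simple real eigenvalues $0=\lambda_1<\cdots<\lambda_N$ and the cofactor identity of part~(3), whose proof uses only simplicity, remains valid. Thus $c_i^{\pm}=\pm\sqrt{\lambda_i/2}$ for $2\le i\le N$, whence $k_i^{\pm}(\nu)\sim\pm\sqrt{\lambda_i\nu/2}$. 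The second-order corrections are extracted as before: setting $k=\nu^{1/2}\bigl(\pm\sqrt{\lambda_i/2}+\alpha\bigr)$, forming $h_i^{\pm}(\alpha;\nu^{1/2}):=\nu^{-N+1/2}g(\cdot,\nu)$, and applying the implicit function theorem at $(0,0)$ (legitimate since $P_N'(\lambda_i)\ne 0$), the coefficient of $\nu$ in $k_i^{\pm}(\nu)$ comes out as $-\tfrac{\i}{4}\bigl(\theta_0 Q_N^1(\lambda_i)+\theta_{2N-1}Q_N^N(\lambda_i)\bigr)/P_N'(\lambda_i)$, which by Lemma~\ref{lem:capamatrixspectrum1}(3) equals $-\tfrac{\i}{4}\bigl(\theta_0 a_{i1}b_{i1}+\theta_{2N-1}a_{iN}b_{iN}\bigr)/\sum_j a_{ij}b_{ij}$; substituting $\nu=2\delta/r+O(\delta^2)$, $\omega=rv_bk$ and the values of $\theta_0,\theta_{2N-1}$ from \eqref{equ:x_j} yields the stated expansions, with analyticity in $\delta^{1/2}$ again from the implicit function theorem. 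Finally, the eigenmode description follows by repeating Steps 1 and 2 of the proof of Theorem~\ref{thm:subwaveeigenmodes1}, the only change being that the resonator transfer block is $\mathrm{e}^{-\gamma_j\ell_j/2}M(rk/\delta)T(rk;\gamma_j,\ell_j)M\bigl(\delta/(rk)\bigr)$; for $k=O(\delta^{1/2})$ the attenuation $\mathrm{e}^{-\gamma_j\ell_j/2}$ exactly cancels the $\sinh+\cosh$ growth to leading order, so this block has $(1,1)$-entry $1+O(\delta)$ and $(2,1)$-entry $-\lambda\,\theta_{2j-1}(\cdots)+O(\delta^{1/2})$, keeping $u$ essentially constant and $u'=O(\delta)$ inside each resonator, and assembling the chain of jump and propagation relations produces $(\mathcal C-\lambda I)\bm a=\bm 0$.

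I expect the main obstacle to be the combinatorial identification in the second paragraph: after fully expanding the $2N$-fold product $G(k,\nu)$, one must verify that the leading $k$-coefficients along the two edges $l_1,l_2$ are \emph{exactly} the coefficients of $P_N$, $Q_N^1$ and $Q_N^N$ for the non-symmetric matrix $\mathcal C$ of \eqref{equ: Non_cap_matrix}. The asymmetric weights $\mathrm{e}^{\pm\beta_jt_j/2}$ destroy the left--right symmetry present in the Hermitian case, so the bookkeeping of which resonator factor contributes a $\cosh$, a $\sinh$, or a differentiated term to each monomial $k^j\nu^l$ is more delicate than in Appendix~\ref{sect:proofexpangl}; as there, I would isolate this as a standalone proposition and defer its proof to an appendix.
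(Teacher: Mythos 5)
Your proposal follows exactly the route the paper intends: the paper states this theorem without proof, asserting only that it follows ``similarly'' to Theorem \ref{thm: subwavelength_resonant_frequencies} by applying the Newton polygon method to the expansion of $g(k,\nu)$ built from the non-diagonal factors $L_j(k)$, and your outline --- the modified identities $RL_{2i-1}(k)R=-\tfrac{4\i k}{\sqrt{\Delta_{2i-1}}}\sinh\bigl(\tfrac{\sqrt{\Delta_{2i-1}}t_{2i-1}}{2}\bigr)R$, the unchanged Newton polygon with edges $l_1,l_2$, the identification of the $l_2$ balance equation with $P_N(2c^2)=0$ for the matrix \eqref{equ: Non_cap_matrix}, the similarity-to-symmetric argument for simplicity of the eigenvalues, and the implicit-function-theorem corrections via Lemma \ref{lem:capamatrixspectrum1}(3) --- is precisely that adaptation. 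The one piece you defer (the non-Hermitian analogue of Proposition \ref{prop:ex_g} with the asymmetric weights $\e^{\pm\beta_jt_j/2}$ entering through $\theta_j$) is the same combinatorial lemma the paper itself would need and relegates to an appendix in the Hermitian case, so your treatment is, if anything, more explicit than the paper's.
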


\section{Concluding remarks}
In this work, we have analyzed the scattering resonances of general one-dimensional acoustic media, identifying them as the zeros of an explicit trigonometric polynomial using the propagation matrix method. Leveraging Nevanlinna theory, we have established the global distribution properties of these resonances and characterized the resonance-free region by demonstrating the uniform boundedness of their imaginary parts. Additionally, we have derived asymptotic expansions for both subwavelength and non-subwavelength resonances in terms of the high contrast parameter. In the subwavelength regime, we have further employed the Newton polygon method to establish connections between our approach and the capacitance matrix theory for Minnaert resonances. This work lays the foundation for the analysis of scattering resonances in acoustic media with high-contrast bulk modulus, as well as the extension of capacitance matrix theory to non-subwavelength resonances. These topics will be investigated in forthcoming papers.


\appendix
\section{Asymptotic analysis of zeros via Newton polygon method}\label{app: Newton Polygon Method}
In this section, we review the Newton polygon method \cite{walker1978algebraic} for obtaining the asymptotics of the zeros of an analytic function.
We consider a function \(f(u, v)\) that is analytic in a region \(U \subset \C^2\), where \(u\) is treated as the variable and \(v\) as a parameter. Suppose that for \(v = v_0\),  the function \(f(\cdot, v_0)\) has an \(n\)-fold zero at \(u = u_0\):
\begin{equation*}
    \frac{\partial^k f}{\partial u^k}(u_0, v_0) = 0, \quad \text{for\ \, } k = 0, \dots, n-1, \quad \frac{\partial^n f}{\partial u^n}(u_0, v_0) \neq 0.
\end{equation*}
We next analyze the asymptotic behavior of the zeros \(u(v)\) of $f(\cdot,v)$ as \(v \to v_0\) using the Newton polygon method. For simplicity, we set \((u_0, v_0) = (0, 0)\). Expanding \(f\) at \((0, 0)\), we have:  
\begin{align}\label{equ: f(u,v) expand}  
f(u, v) = \sum_{j, k \geq 0} c_{jk} u^j v^k, \quad c_{j0} = 0 \ \text{for} \ j < n, \ \text{and} \ c_{n0} \neq 0.  
\end{align}
It suffices to find the asymptotic behavior of solutions $u(v)$ to $f(u(v),v)=0$ as $v\to0$. The Newton polygon is constructed as follows:
\begin{enumerate}
    \item For each nonzero coefficient $c_{jk}$ in (\ref{equ: f(u,v) expand}), plot $(j,k)$ in the plane $\R^2$.
    \item Form the convex hull of the set $\{(j,k):c_{jk}\ne0\}$.
    \item The lower boundary of this convex hull consists of a sequence of linear segments (edges), which determines the dominant asymptotic regimes.
\end{enumerate}
Each edge $l$ connecting points $(j_1,k_1)$ and $(j_2,k_2)$ with $j_1<j_2$ has the slope
\begin{equation} \label{eq:slope}
    \lambda_l=\frac{k_2-k_1}{j_2-j_1}.
\end{equation}
The dominant scaling exponent for $v$ corresponding to this edge is $s_l=-\lambda_l$, implying the asymptotic form $u\sim cv^{s_l}$ with $c\ne0$. The dominant balance equation for edge $l$ is derived as follows:
\begin{enumerate}
    \item Selecting all terms $(j,k)$ lying on the line $l$.
    \item Substituting the asymptotic form $u=cv^{s_l}$ in the equation
    \[
    \sum_{(j,k)\in l\cap\Z^2}c_{jk}u^jv^k=0
    \]
    to obtain the balance equation in $c$
    \begin{align}\label{equ: balance equation}
    \sum_{(j,k)\in l\cap\Z^2}c_{jk}c^j=0.
    \end{align}
    \item The left side of (\ref{equ: balance equation}) is a polynomial in $c$. The number of non-zero solutions $c$ (counting multiplicities) equals to the number of asymptotic branches associated with $l$. 
\end{enumerate}
For each asymptotic branch, the full expansion is given by a Puiseux series
\begin{align}\label{equ: u(v) expand}
u(v)=\sum_{m=0}^\infty d_mv^{s_l+t_m},
\end{align}
where $0=t_0<t_1<t_2<\cdots$ are rational exponents determined recursively. The coefficients \(d_m\) are determined as follows:  
\begin{enumerate}
    \item Substitute the series (\ref{equ: u(v) expand}) into \(f(u, v) = 0\) and expand it in powers of \(v\). 
    \item Match terms at each order of \(v\) and solve sequentially for each \(d_m\).
\end{enumerate} 
The exponents \(t_m\) are determined by the geometry of the Newton polygon above the initial edge. By Rouche's Theorem, the total number of asymptotic branches (counting multiplicities) must equal \(n\), consistent with the multiplicity of the zero \(u_0\) of \(f(\cdot, v_0)\).

\section{Proof of Proposition \ref{prop:ex_g}}\label{sect:proofexpangl}
\begin{proof}
Since $g(k;\nu):=G(k,\nu)_{2,2}$, it suffices to derive  the following expansions of $G_l(k)$ in (\ref{equ:G_expession}) as $k\rightarrow0$: for $1\leq l \leq N-1$,
\begin{equation} \label{expGl}
      \begin{aligned}
        G_l(k)=\bigl(\prod_{j=1}^{2N-1}t_j\bigr)&\left[ 2^l(-2\i k)^{2N-1-2l}\sum_{1\leq j_1\prec j_2\prec \cdots\prec j_l\leq 2N-2}\bigl(\prod_{i=1}^{l}\theta_{j_i}\bigr)R \right.\\
        &+\theta_02^{l-1}(-2\i k)^{2N-2l}\sum_{2\leq j_1\prec j_2\prec \cdots\prec j_{l-1}\leq 2N-2}\bigl(\prod_{i=1}^{l-1}\theta_{j_i}\bigr)R_{-}\\
        & +\theta_{2N-1}2^{l-1}(-2\i k)^{2N-2l}\sum_{1\leq j_1\prec j_2\prec \cdots\prec j_{l-1}\leq 2N-3}\bigl(\prod_{i=1}^{l-1}\theta_{j_i}\bigr)R_{+}\\
        &\left.+O(k^{2N-2l+1}) \right],
    \end{aligned}
\end{equation}
    and for $l=0, N$,
    \begin{equation} \label{expGl2}      G_0(k)=\bigl(\prod_{j=1}^{2N-1}t_j\bigr)(-2k\i)^{2N-1}R+O(k^{2N+1}), \quad G_N(k)=2^{N-1}(R_{+}+R_{-})+O(k).
    \end{equation}
This expansion is based on (\ref{equ: Matrix rep4}) and the following identities:
\begin{equation}\label{equ:matrixexpan2}
\begin{aligned}
        & 1. \ RMR=\eta(M)R,\\
        & 2. \ \eta(L_j)=-2t_jk\i+O(k^3),\q \eta(L_{j+1}SL_{j})=2+O(k^2),\\
        & 3.\ \tau_0\left(\eta(L_j)\right)=1,\q \tau_0(\eta(L_{j+1}SL_{j}))=0,\q \tau_0(\eta(L_{j+2}SL_{j+1}SL_{j}))\geq 1,\\
        & 4.\ SL_{2N-1}(k)R=R_-+O(k), \q RL_{1}(k)S=R_++O(k), \q\\
    \end{aligned}
\end{equation}
where $\eta(M) = M_{21}+M_{22}-M_{11}-M_{12}$ for $M\in \mathbb{C}^{2\times 2}$, and $\tau_0(f)$ denotes the order of the leading-order term in the asymptotic expansion of $f(k)$ as $k\to 0$, in particular, $\tau_0(0)=+\infty$.

Firstly, for $1\leq l\leq N-1$, by (\ref{equ: Matrix rep4}), the expression of $G_l(k)$ consists of $\tbinom{2N}{l}$ terms, each corresponding to a selection of $l$ $S$-matrices (or equivalently, $2N-l$ $R$-matrices) from the $2N$ available matrices $R + \nu S$.  We denote by $Q$ the matrix product corresponding to a specific selection, and we will analyze its order in $k$ as $k\to 0$. 

To this end, we first consider the $2N-l-1$ gaps formed by the $2N-l$ $R$-matrices and denote the product of all matrices within each gap by $M_1^{a_1}, M_2^{a_2},\ldots, M_{2N-l-1}^{a_{2N-l-1}}$, where $a_i$ denotes the number of $S$-matrices in the $i$-th gap; see Fig.\ref{fig: M_s_basic} for an illustration. Moreover, we denote by $M_L^{a_L}$ the product of all matrices to the left of the first $R$-matrix, and by $M_R^{a_R}$ the product of all matrices to the right of the last $R$-matrix, where $a_L$ and $a_R$ denote the corresponding numbers of $S$-matrices involved. In particular, $M_L^0$ and $M_R^0$ are identity matrices.

\begin{figure}[htb]
    \centering
    \begin{tikzpicture}[
        block/.style={rectangle, draw, inner sep=3pt, font=\footnotesize},
        label/.style={font=\footnotesize, below=2pt}
    ]
        \node[block] (ML1) at (0,0) {$SL_{2N-1}$};
        \node[label] at (ML1.south) {$M_L^1$};
        \node[font=\footnotesize, right=2pt of ML1] (R1) {$R$};
        \node[block, right=2pt of R1] (M10) {$L_{2N-2}$};
        \node[label] at (M10.south) {$M_1^0$};
        \node[font=\footnotesize, right=2pt of M10] (R2) {$R$};
        \node[block, right=2pt of R2] (M22) {$L_{2N-3}SL_{2N-4}SL_{2N-5}$}; 
        \node[label] at (M22.south) {$M_2^2$};
        \node[font=\footnotesize, right=2pt of M22] (dots) {$\cdots$};
        \node[font=\footnotesize, right=2pt of dots] (Rmid) {$R$};
        \node[block, right=2pt of Rmid] (Mend) {$L_4SL_3$};
        \node[label] at (Mend.south) {$M_{2N-1-l}^1$};
        \node[font=\footnotesize, right=2pt of Mend] (Rlast) {$R$};
        \node[block, right=2pt of Rlast] (MR2) {$L_2SL_1S$}; 
        \node[label] at (MR2.south) {$M_R^2$};
    \end{tikzpicture}
    \caption{Structure of $Q$ with $a_L=1,a_1=0,a_2=2,\cdots, a_{2N-l-1}=1,a_R=2$.}
    \label{fig: M_s_basic}
\end{figure}

Then, by \eqref{equ:matrixexpan2}(1), the total matrix product $Q$ can be written as 
\begin{equation}\label{equ:msexpress1}
Q=\left[\prod_{i=1}^{2N-l-1}\eta(M_i^{a_i})\right] M_L^{a_L}RM_R^{a_R}.    
\end{equation}
Since we select exactly $l$ $S$-matrices, we have $a_L + a_R + \sum_{i=1}^{2N-l-1}a_i=l$, which implies that at least $2N-2l-1$ of the $a_i$'s must be zero. Furthermore, applying \eqref{equ:matrixexpan2}(3) to (\ref{equ:msexpress1}) and using the definition of $\tau_0$, we obtain
\begin{align}\label{equ:tau_0(Q)}
\tau_0(Q)\geq\sum_{i=1}^{2N-l-1}\tau_0(\eta(M_i^{a_i}))\geq\sum_{i=1}^{2N-l-1}\tau_0(\eta(M_i^{a_i}))\mathbf{1}_{a_i=0}\geq 2N-2l-1.
\end{align}
The equality holds if and only if we have $2N-2l-1$ $a_i$'s equal to zero, the remaining $l$ $a_i$'s are one, and $a_L=a_R=0$. In this case (i.e., $\tau_0(Q) = 2N - 2l - 1$), the $l$ matrices $M_{i}^{a_i}$ with $a_i=1$ have the form $L_{j_i+1}SL_{j_i}$ with $1\leq j_1\prec j_2\prec \cdots\prec j_l\leq 2N-2$. Then, according to \eqref{equ:matrixexpan2}(2), the leading-order term in the expansion of $Q$ is given by 
\begin{align}\label{equ: eta=2N-2l-1}
    Q=&\left[\prod_{i=1}^{2N-l-1}\eta(M_i^{a_i})\right] R=\prod_{j\neq j_i ,j_{i+1}}\eta(L_j)\prod_{i=1}^{l}\eta(L_{j_i +1}SL_{j_i})R\\
    =&\left[ 2^l(-2\i k)^{2N-1-2l}\prod_{j=1}^{2N-1}t_j\prod_{i=1}^{l}\theta_{j_i} \right]R+O(k^{2N-2l+1})\notag.
\end{align}

Now consider the selections that yield $\tau_0(Q)=2N-2l$. In this case, the equality condition in \eqref{equ:tau_0(Q)} implies that exactly $2N-2l$ of the $a_i$'s equal to zero. Since $a_L+a_R+\sum_{i=1}^{2N-l-1}a_i\mathbf{1}_{a_i\neq 0}=l$, it suffices to consider the following two cases: 

\noindent $\bullet$ $a_L=a_R=0$, exactly one $a_i$ equals to two, and all remaining non-zero $a_i$'s equal to one. Then, 
\begin{align*}
    \tau_0(Q)=\sum_{i=1}^{2N-l-1}\tau_0(\eta(M_i^{a_i}))=\sum_{i=1}^{2N-l-1}\tau_0(\eta(M_i^{a_i}))(\mathbf{1}_{a_i=0}+\mathbf{1}_{a_i=2})\geq 2N-2l+1. 
\end{align*}
In this case, $\tau_0(Q)$ cannot be $2N-2l$.

\noindent $\bullet$ $a_L=1, \ a_R=0$ or $a_R=1, \ a_L=0$, and all nonzero $a_i$ equal to one. We first expand $Q$ for the case $a_L=1, \ a_R=0$. In this case, the $l-1$ matrices $M_{i}^{a_i}$ with $a_i=1$ have the form $L_{j_i+1}SL_{j_i}$, where $1\leq j_1\prec j_2\prec \cdots\prec j_{l-1}\leq 2N-3$. Then, using \eqref{equ:matrixexpan2}(2)(4), we obtain 
\begin{align}\label{equ: eta=2N-2l}
    Q=2^{l-1}(-2\i k)^{2N-2l}\theta_{2N-1}\prod_{j=1}^{2N-1}t_j\prod_{i=1}^{l-1}\theta_{j_i}R_{+}+O(k^{2N-2l+1}).
\end{align}
Similarly, we can expand $Q$ for the case when $a_R=1, \ a_L=0$ that
\[
 Q=2^{l-1}(-2\i k)^{2N-2l}\theta_{0}\prod_{j=1}^{2N-1}t_j\prod_{i=1}^{l-1}\theta_{j_i}R_{-}+O(k^{2N-2l+1}).
\]
For the remaining selections, we have $\tau_0(Q) \geq 2N - 2l + 1$, and they are absorbed into the $O(k^{2N - 2l + 1})$ term in \eqref{expGl}. Having established the expansion of $G_l(k)$ for $1\leq l \leq N-1$, the proof is completed by deriving the expressions at the boundary indices $l=0$ and $l=N$.

For \(l = 0\), there is only one possible selection (i.e., selecting all \(R\)-matrices from the \(2N\) available matrices \(R + \nu S\)), which gives  
\[
G_0(k) = \left[\prod_{i=1}^{2N-l-1} \eta(L_i)\right] R = \Bigl(\prod_{j=1}^{2N-1} t_j\Bigr)(-2k\i)^{2N-1}R + O(k^{2N+1}).
\]

For \(l = N\), for any selection and the corresponding product \(Q\), it follows from the inequality \eqref{equ:tau_0(Q)} that when \(\tau_0(Q) = 0\), all \(a_i\)'s are equal to one, and either \(a_L\) or \(a_R\) is equal to one. By absorbing the product of other selections into the \(O(k)\) term, we obtain  $G_N(k) = 2^{N-1}(R_{+} + R_{-}) + O(k)$ as desired. 

\end{proof}


\bibliographystyle{siamplain}
\bibliography{highcon}
\end{document}